\newtheorem{defn}{Definition}[section]
\newtheorem{thm}{Theorem}[section]
\newtheorem{prop}{Proposition}[section]
\newtheorem{lem}{Lemma}[section]
\newtheorem{cor}{Corollary}[section]
\newtheorem{rem}{Remark}[section]
\DeclareMathOperator*{\argmin}{argmin}
\newcommand{\mP}{{\mathcal{P}}}
\newcommand{\mf}{{\mathcal{F}}}
\newcommand{\mh}{\mathcal{H}}
\newcommand{\mk}{{\mathcal{K}}}
\newcommand{\mee}{\mathrm{E}}
\newcommand{\mptra}{{\mathcal{P}_2^a(\R)}}
\newcommand{\mptrd}{{\mathcal{P}_2(\Rd)}}
\newcommand{\mpdtard}{{\mathcal{P}_{2}^a(\Rd)}}
\newcommand{\mw}{\mathcal{W}}
\newcommand{\rhote}{\rho_{\tau}^{\varepsilon}}
\newcommand{\rhotne}{\rho_{\tau,\varepsilon}^{n}}
\newcommand{\rhotnne}{\rho_{\tau,\varepsilon}^{n+1}}
\newcommand{\R}{\mathbb{R}}
\newcommand{\Rd}{{\mathbb{R}^{d}}}
\newcommand{\Rn}{{\mathbb{R}^{n}}}
\newcommand{\Rdd}{{\mathbb{R}^{2d}}}
\newcommand{\rhoe}{\rho^\varepsilon}
\newcommand{\rhoo}{\rho_{1}}
\newcommand{\rhotwo}{\rho_{2}}
\newcommand{\rrho}{\bm{\rho}}
\newcommand{\mmu}{\bm{\mu}}
\newcommand{\nnu}{\bm{\nu}}
\newcommand{\vv}{\bm{v}}
\newcommand{\rhoi}{\rho_{i}}
\newcommand{\rhootaue}{\rho_{1,\tau}^{\varepsilon}}
\newcommand{\rrhotau}{\bm{\rho}_{\tau}}
\newcommand{\rrhotaue}{\bm{\rho}_{\tau}^\varepsilon}
\newcommand{\rhottaue}{\rho_{2,\tau}^{\varepsilon}}
\newcommand{\rhootaune}{\rho_{1,\tau}^{\varepsilon, n}}
\newcommand{\rhottaune}{\rho_{2,\tau}^{\varepsilon, n}}
\newcommand{\rhootaunne}{\rho_{1,\tau}^{\varepsilon, n+1}}
\newcommand{\rhottaunne}{\rho_{2,\tau}^{\varepsilon, n+1}}
\newcommand{\rhoitaune}{\rho_{i,\tau}^{\varepsilon, n}}
\newcommand{\rhoitaunne}{\rho_{i,\tau}^{\varepsilon, n+1}}
\newcommand{\rhojtaune}{\rho_{j,\tau}^{\varepsilon, n}}
\newcommand{\rrhotaune}{\bm{\rho}_{\tau}^{\varepsilon,n}}
\newcommand{\rrhotaunne}{\bm{\rho}_{\tau}^{\varepsilon, n+1}}
\newcommand{\rhooe}{\rho_1^\varepsilon}
\newcommand{\rhotwoe}{\rho_2^\varepsilon}
\newcommand{\voe}{v_1^\varepsilon}
\newcommand{\vte}{v_2^\varepsilon}
\def\XXint#1#2#3{{\setbox0=\hbox{$#1{#2#3}{\int}$}
  \vcenter{\hbox{$#2#3$}}\kern-.5\wd0}}
\title{Porous medium equation and cross-diffusion systems as limit of nonlocal interaction}
\date{}
\begin{document}
\author{Martin Burger \and Antonio Esposito}
\address{M. Burger -- 
Department Mathematik, Friedrich-Alexander-Universit\"at Erlangen-N\"urnberg, Cauerstrasse 11,
91058 Erlangen, Germany.}
\email{martin.burger@fau.de}

\address{A. Esposito -- Mathematical Institute, University of Oxford, Woodstock Road, Oxford, OX2 6GG, United Kingdom.}
\email{antonio.esposito@maths.ox.ac.uk}

\begin{abstract}
This paper studies the derivation of the quadratic porous medium equation and a class of cross-diffusion systems from nonlocal interactions. We prove convergence of solutions of a nonlocal interaction equation, resp. system, to solutions of the quadratic porous medium equation, resp. cross-diffusion system, in the limit of a localising interaction kernel. The analysis is carried out at the level of the (nonlocal) partial differential equations and we use gradient flow techniques to derive bounds on energy, second order moments, and logarithmic entropy. The dissipation of the latter yields sufficient regularity to obtain compactness results and pass to the limit in the localised convolutions. The strategy we propose relies on a discretisation scheme, which can be slightly modified in order to extend our result to PDEs without gradient flow structure. In particular, it does not require convexity of the associated energies. Our analysis allows to treat the case of limiting weak solutions of the non-viscous porous medium equation at relevant low regularity, assuming the initial value to have finite energy and entropy. 
\end{abstract}

\keywords{porous medium equation, nonlocal interaction, cross-diffusion systems, local limit, variational time discretisation}
\subjclass[2020]{35Q70, 35A15, 35D30, 35K45, 35Q82}





\maketitle

\section{Introduction}
In this manuscript we deal with the connection between nonlocal interaction and the quadratic porous medium equation \eqref{eq:pme2}, as well as a class of cross-diffusion systems \eqref{eq:cross-diff-sys}, for a suitable choice of the interaction potentials. We show that a weak solution of the quadratic porous medium equation can be obtained as limit of a sequence of weak measure solutions of a nonlocal interaction equation; this can be extended to a class of cross-diffusion systems. More precisely, starting with the case of \eqref{eq:pme2} for ease of presentation, let $W_1:=V_1*V_1$, for a function $V_1$ satisfying some assumptions that will be clarified later, \textit{cf.}~\ref{ass:v1}. For any $\varepsilon>0$, consider the scaling $$W_\varepsilon(x)=\varepsilon^{-d}W_1(x/\varepsilon), \quad \mbox{i.e.} \quad W_\varepsilon=V_\varepsilon*V_\varepsilon.$$ We prove that, as $\varepsilon\to0^+$, a sequence of weak measure solutions to
\begin{equation}\label{eq:nlie}
    \partial_t\rhoe=\nabla\cdot(\rhoe\nabla W_\varepsilon*\rhoe) \tag{NLIE}
\end{equation}
converges to a weak solution of
\begin{equation}\label{eq:pme2}
    \partial_t\rho=\frac{1}{2}\Delta(\rho^2)=\nabla\cdot(\rho\nabla\rho). \tag{PME}
\end{equation}
An analogous result in the case of multi-species leads to a class of cross diffusion systems
\begin{align}
    \partial_{t}\rho_i=\sum_{j=1}^M \mbox{div}\left(\rho_i A_{ij}\nabla  \rho_j \right), \tag{CDS}
\end{align}
for $i,j=1,\dots,M$, $M\in\mathbb{N}$, under suitable assumptions on the matrix of the coefficients, as well as interaction kernels in the nonlocal version.

The main motivation for this work is to provide further insights into the derivation of diffusion-type equations from a system of interacting particles, rather than a direct derivation as in continuum mechanics. We remind the reader to \cite{Vaz07} for a complete overview on the analysis of the porous medium equation. Obtaining a particle approximation for the partial differential equation under study is a fascinating and useful result for the analysis of PDEs, as it provides a rigourous derivation and way to construct solutions of PDEs, leading to well-posedness, as well as powerful numerical methods. We mention here the seminal works \cite{onsager1944,morrey1955,dobrushin} and the review \cite{golse}. In case of transport equations (without diffusion), e.g. \eqref{eq:nlie}, deterministic approaches represent a reasonable choice since weak measure solutions may exist, in particular \textit{particle solutions} in the form of an empirical measure
\[
\rho_t^N=\frac{1}{N}\sum_{i=1}^N\delta_{X_i(t)},
\]
where, for any $i=1,\dots,N$, $X_i(t)$ solves a suitable ODE. For instance, for \eqref{eq:nlie} we would have the ODEs
\[
\dot{X}_i(t)=-\frac{1}{N}\sum_{j}\nabla W_\varepsilon(X_i(t)-X_j(t)).
\]
For further details we refer the reader to \cite{CDFFLS,CCH14}, and to \cite{DFF,DFESPSCH21} in case of systems of nonlocal PDEs. The problem is substantially different when diffusion is present, initial values in the form of an empirical measure disperse. More precisely, starting from a Dirac delta as initial datum, we will see an immediate smoothing effect which excludes measure solutions. For this reason, deterministic particle approximations are challenging, even though numerical methods have been proposed in this direction. We mention \cite{Russo90,GosseToscani06} for one dimensional linear and nonlinear diffusion, respectively, and \cite{Degond_Mustieles90,Patacchini_blob19} in any dimension. 

A successful attempt to overtake the aforementioned difficulty is given by stochastic particles undergoing a Brownian motion. We start mentioning an inspiring work for our paper, \cite{FigPhi2008}, where Figalli and Philipowski deal with the viscous porous medium equation with exponent $m>1$. They obtain (very weak) solutions as limit of a sequence of distributions of the solutions to nonlinear stochastic differential equations. This generalises previous results by Oelschl\"{a}ger, \cite{oelschlaeger2001,morale2005interacting}, and Philipowski, cf.~\cite{FigPhi2008}, the latter concerning only the case $m=2$. As byproduct of their analysis, the authors of \cite{FigPhi2008} prove propagation of chaos, thus providing a connection between microscopic and macroscopic description. In \cite{Philipowski2007} the quadratic porous medium equation is derived from a stochastic mean field interacting particle system with the addition of a vanishing Brownian motion. The concept of solution used is that of strong $L^1$, following \cite{Vazquez92_PM_introduction}, which is not the one used in this work, \textit{cf.~ Definition \ref{def:sol-pme2}} below. We point out that our strategy is different from the aforementioned papers since it does not require the addition of higher regularity induced by (vanishing) viscosity, but is based on an optimal transport theory approach, using the $2$-Wasserstein gradient flow structure of the two equations.

As counterpoint to stochastic methods, in \cite{Ol}, Oelschl\"{a}ger proves for the first time a particle approximation for classical and positive solutions of \eqref{eq:pme2} in $\Rd$, and for weak solutions in one dimension. Still in one space dimension, \cite{Daneri_Radici_Runa_JDE} presents a deterministic particle approximation for aggregation-diffusion equations, including the porous medium equation. In the recent article \cite{Patacchini_blob19}, the authors provide a deterministic particle method for linear and nonlinear diffusion equations, interpreted as $2$-Wasserstein gradient flows. Their approach is inspired by the blob method for aggregation equations in \cite{Craig_blob2016}. In particular, in \cite{Patacchini_blob19}, Carrillo, Craig, and Patacchini proceed by regularising the associated internal energy and prove $\Gamma$-convergence towards the unregularised energy, both for linear and nonlinear diffusion, that is $m\ge1$. With the addition of a confining drift or interaction potential, they also show stability of minimisers, ensured by the additional potentials. In case $m\ge2$, they provide stability of gradient flows under sufficient regularity conditions, using the approach of Sandier and Serfaty, \cite{Sandier_Serfaty2004,Serfaty_gammaconv_2011}. For the quadratic porous medium equation, i.e. $m=2$, the regularity conditions needed are satisfied for an initial datum with bounded second order moments and entropy --- as in our case. This generalises a previous result by Lions and
Mas-Gallic, \cite{Lions_MasGallic_2001}, on a numerical scheme for \eqref{eq:pme2} on a bounded domain with periodic boundary conditions. In the case $m>2$ or more general initial data, it is an open problem to check and apply the stability in \cite[Theorem 5.8]{Patacchini_blob19}. Recently in \cite{blob_weighted_craig}, Craig et al. use the blob method to obtain a deterministic particle approximation for weighted (quadratic) porous medium equations, relevant in sampling methods, control theory, and in models of two-layer neural networks.

In the case of cross-diffusion systems, a stochastic approach has been recently considered by Chen et al. in \cite{Holzinger_deriv_cross_diff}, extending the mean-field limit studied in \cite{ChenDausJuengel2019}, the latter differing from \eqref{eq:cross-diff-sys} by the addition of linear diffusion in each species. The key idea in \cite{Holzinger_deriv_cross_diff} is to consider interacting diffusion coefficients in the systems of stochastic differential equations and to perform the mean-field limit using an intermediate nonlocal cross-diffusion system. In \cite{Moussa_nl_lo_triangular2020}, Moussa shows the nonlocal-to-local limit for the triangular SKT model on a torous, with bounded coefficients. The latter work partially addresses a question raised by Fonbona and Méléard in \cite{Fontbona_Meleard_nl_SKT}, where they consider a nonlocal version of the SKT model, \cite{SKT}.  Let us stress that \eqref{eq:cross-diff-sys} is different from the SKT model. To the best of our knowledge, a deterministic derivation has not been proven yet. 

Our result is strictly related to \cite{Patacchini_blob19} and \cite{blob_weighted_craig} as it provides a rigorous procedure to derive the quadratic porous medium equation from the nonlocal interaction equation. We observe that the regularisation of the energy in \cite{Patacchini_blob19} and \cite{blob_weighted_craig}, for $m=2$ and $\bar{\rho}\equiv1$, corresponds to our choice for the interaction potential in \eqref{eq:nlie}, though we relax the regularity assumption on the kernel in the convolution, so that to include Morse type potentials, \textit{cf.~}\ref{ass:v1}. Moreover, we propose an alternative approach that can be used without $\lambda$-convexity and even if a gradient flow structure is not exhibited (see below and section \ref{sec:conclusion}). More precisely, we construct solutions of \eqref{eq:nlie} by means of the JKO scheme, \cite{JKO}, in order to obtain uniform estimates on the sequence of solutions $\{\rhoe\}_\varepsilon$, which are nevertheless only measures. This issue is solved by considering a smoothed version of $\rho^\varepsilon$, given by $v^\varepsilon: = V_\varepsilon*\rho^\varepsilon $. Indeed, starting from an initial probability density in $L^2(\Rd)$ with finite second order moment and logarithmic entropy, we are able to prove a uniform bound in $H^1$ for $v_\varepsilon$, entailing the right compactness to pass to the limit in the weak formulations of the equations and recover a weak solution of \eqref{eq:pme2}. Our analysis mirrors that convergence from deterministic particle system might not work due to infinite entropy. This problem is also observed in \cite[Remark 6.3]{Patacchini_blob19}, though numerical simulations in \cite[Section 6]{Patacchini_blob19} give confidence that deterministic approximation could be achieved. Indeed, $\lambda$-convexity of the associated energy allows to exploit stability estimates with the $2$-Wasserstein distance so that to obtain a particle approximation when the number of particles involved depends on $\varepsilon$, i.e. $N=N(\varepsilon)$, in a way the approximation of the initial datum converges zero in Wasserstein fast enough --- this is proven in \cite[Theorem 1.4]{blob_weighted_craig} assuming more regularity on the kernel, at least $V\in C^2$. A result for the number of particles independent of the localisation scaling is still open. Interacting stochastic particle systems still represent a solid method. 

Our approach neither exploits  $\lambda$-convexity of the energies involved, as in \cite{Patacchini_blob19} and \cite{blob_weighted_craig}, nor any equivalent gradient flow formulation of the equations such as \textit{evolution variational inequality} or \textit{curve of maximal slope}. This is indeed an advantage, since using the JKO scheme at the level of the nonlocal interaction equation allows to extend our strategy to the case of equations which are not gradient flows by means of a suitable splitting scheme, \cite{CL2}. The latter issue is also relevant for the extension of our result to cross-diffusion systems, since geodesic convexity is valid in few cases, if any. Indeed, $\lambda$-convex gradient flows may be too restrictive since the corresponding assumption on the diffusion matrix effectively leads to diagonal diffusion, i.e. $A_{ij}=0$ for $j \neq i$, \textit{cf.~}\cite{matthes_zinsl, beck_matthes_zizza22}. Furthermore, in order to include a large class of cross-diffusion systems, we note that the corresponding nonlocal interaction system does not necessarily exhibit a Wasserstein gradient flow structure, \textit{cf.~}\cite{DFF}. However, this does not exclude to apply a time-discretisation of the system to get uniform bounds and existence of a (sequence) of solutions, as proven in section \ref{sec:systems}.

We also observe that our strategy can be applied to linear Fokker-Planck equations, for suitable assumptions on the external potentials, since we anyway need to assume finite logarithmic entropy initially. Similarly, one can add linear diffusion for each species in \eqref{eq:cross-diff-sys}. As previously mentioned, the extension to the non-viscous and \textit{non-quadratic} porous medium equation is still an open problem, \textit{cf.~}\cite[Theorem 5.8]{Patacchini_blob19}. It is then natural to see whether our approach can be used for $m\neq2$, using a different nonlocal equation. For a better understanding of these problems we provide more details in  section \ref{sec:conclusion}.

\subsection{Structure of the paper}
First, in section \ref{sec:notation} we specify the notation and preliminary concepts used throughout the paper. In section \ref{sec:nlie} we focus on the nonlocal interaction equation \eqref{eq:nlie}. We provide existence of a sequence of weak measure solutions, \textit{cf.~Definition \ref{def:weak-meas-sol}}, by means of the JKO scheme, which is useful to derive uniform estimates on the associated energy and second order moments. Section \ref{sec:compactness} is devoted to obtain the suitable compactness for the sequence of weak measure solutions to \eqref{eq:nlie}. In order to pass to the limit in the weak formulation of \eqref{eq:nlie} to obtain the weak solution of \eqref{eq:pme2} we derive a uniform $H^1$ bound (in space) on a suitable smoothed sequence associated, by taking advantage of the time-discretisation of \eqref{eq:nlie}. In view of this analysis, we recover the weak solution of \eqref{eq:pme2} in the $\varepsilon\to0^+$ limit in Theorem \ref{thm:existence-pme2} in section \ref{sec:local-limit-pme2}. The results obtained in the previous sections are extended to the multi-species case in section \ref{sec:systems}. We conclude the paper with some remarks on possible extensions of our result in section \ref{sec:conclusion}. 

\section{Notation and preliminaries}\label{sec:notation}

The interaction potential we consider in the one-species case is the (rescaled)  convolution $W_1:=V_1*V_1$, being $V_1:\Rd\to\R$ such that the following conditions hold:
\begin{enumerate}[label=\textbf{(V)}]
    \item\label{ass:v1} $V_1\in C_b(\Rd;[0,+\infty))\cap C^1(\Rd\setminus\{0\})$, $\|V_1\|_{L^1}=1$, $V_1(x)=V_1(-x)$, $\int_\Rd|x|V_1(x)\,dx<+\infty$, $\nabla V_1\in L^1(\Rd)$, and $|\nabla V_1(x)|\le C(1+|x|)$.  
\end{enumerate}
Among possible examples of kernels $V_1$, we mention Gaussians or pointy potentials such as Morse, the latter not covered by previous results. The assumption \ref{ass:v1} implies that the interaction potential satisfies
\begin{enumerate}[label=\textbf{(W)}]
    \item\label{ass:w} $W_1\in C(\Rd;[0,\infty))$, $W_1(x)=W_1(-x)$ for all $x\in\Rd$, $W_1\in C^1(\Rd\setminus\{0\})$ such that $\nabla W_1=\nabla V_1*V_1$ and $\nabla W_1\in L^1(\Rd)$.
\end{enumerate}
More precisely, we consider the rescaled functions $W_\varepsilon(x)=\varepsilon^{-d}W_1(x/\varepsilon)$ and $V_\varepsilon(x)=\varepsilon^{-d}V_1(x/\varepsilon)$, hence $W_\varepsilon=V_\varepsilon*V_\varepsilon$.

\begin{rem}
We observe that regularity for $W_1$ is inferred by $V_1$, following a standard proof where the lack of compact support can be overtaken by using boundedness from above of $V_1$ and Egorov's theorem. Continuity of partial derivatives is then obtained from $L^1$ integrability of $\nabla V_1$ and Lebesgue's dominated convergence theorem.
\end{rem}

Throughout the manuscript we will denote by $\mP(\Rd)$ the set of probability measures on $\Rd$, for $d\in\mathbb{N}$, and by $\mP_p(\Rd):=\{\rho\in\mP(\Rd):m_p(\rho)<+\infty\}$, being
$m_p(\rho):=\int_\Rd|x|^p\,d\rho(x)$ the $p^{\mathrm{th}}$-order moment of $\rho$, for $1\le p<\infty$. We shall use $\mP_p^a(\Rd)$ for elements in $\mP_p(\Rd)$ which are absolutely continuous with respect to the Lebesgue measure. For $p=2$, the $2$-Wasserstein distance between $\mu_1,\mu_2\in \mptrd$ is
\begin{equation}\label{wass}
d_W^2(\mu_1,\mu_2):=\min_{\gamma\in\Gamma(\mu_1,\mu_2)}\left\{\int_{\Rdd}|x-y|^2\,d\gamma(x,y)\right\},
\end{equation}
where $\Gamma(\mu_1,\mu_2)$ is the class of all transport plans between $\mu_1$ and $\mu_2$, that is the class of measures $\gamma\in\mP(\Rdd)$ such that, denoting by $\pi_i$ the projection operator on the $i$-th component of the product space, the marginality condition
$$
(\pi_i)_{\#}\gamma=\mu_i \quad \mbox{for}\ i=1,2
$$
is satisfied. In the expression above marginals are the push-forward of $\gamma$ through $\pi_i$. For a measure $\rho\in\mP(\Rd)$ and a Borel map $T:\Rd\to\Rn$, $n\in\mathbb{N}$, the push-forward of $\rho$ through $T$ is defined by
$$
 \int_{\Rn}f(y)\,dT_{\#}\rho(y)=\int_{\Rd}f(T(x))\,d\rho(x) \qquad \mbox{for all $f$ Borel functions on}\ \Rn.
$$
Setting $\Gamma_0(\mu_1,\mu_2)$ as the class of optimal plans, i.e. minimizers of \eqref{wass}, the $2$-Wasserstein distance can be written as
$$
d_W^2(\mu_1,\mu_2)=\int_{\Rdd}|x-y|^2\,d\gamma(x,y), \qquad \gamma\in\Gamma_0(\mu_1,\mu_2).
$$
We refer the reader to \cite{AGS,V2,S} for further details on optimal transport theory and Wasserstein spaces. 
\begin{rem}\label{rem:mom-ineq}
From the Definition of the $2$-Wasserstein distance and the inequality $|y|^2\le2|x|^2+2|x-y|^2$ it follows that
$$
m_2(\rho_1)\le2m_2(\rho_0)+2d_W^2(\rho_0,\rho_1), \qquad \forall \rho_0,\rho_1\in\mptrd.
$$
\end{rem}
In Proposition \ref{prop:aulirs-meas} we use the $1$-Wasserstein distance, denoted by $d_1$ and defined by
\begin{align}\label{eq:1wass}
    d_1(\mu_1,\mu_2):=\min_{\gamma\in\Gamma(\mu_1,\mu_2)}\left\{\int_{\Rdd}|x-y|\,d\gamma(x,y)\right\}.
\end{align}
Below we specify the concept of solution to the quadratic porous medium we consider, as well as that of weak measure solutions of the nonlocal interaction equation.
\begin{defn}[Weak solution to \eqref{eq:pme2}]\label{def:sol-pme2}
A weak solution to the porous medium equation
\begin{equation*}
    \begin{cases}
    \partial_t\rho=\nabla\cdot(\rho\nabla\rho)\\
    \rho(0,\cdot)=\rho_0
    \end{cases}\tag{PME}
\end{equation*}
on the time interval $[0,T]$ with initial datum $\rho_0\in\mpdtard\cap L^2(\Rd)$ such that $\int_\Rd\rho_0(x)\log\rho_0(x)dx<\infty$ is a curve $\rho\in C([0,T];\mP_2(\Rd))$ satisfying the following properties:
\begin{enumerate}
    \item for almost every $t\in[0,T]$ the measure $\rho(t)$ has a density with respect to the Lebesgue measure, still denoted by $\rho(t)$, and $\rho\in L^2([0,T];H^1(\Rd))$;
    \item for any $\varphi\in C^1_c(\R^d)$ and all $t\in[0,T]$ it holds
\begin{align*}
        \int_\Rd\varphi(x)\rho(t,x)\,dx= \int_\Rd\varphi(x)\rho_0(x)\,dx-\int_0^t\int_\Rd\rho(s,x) \nabla \varphi(x)\cdot \nabla \rho(s,x)\,dx\,ds.
    \end{align*}
\end{enumerate}
\end{defn}

\begin{defn}[Weak measure solution to \eqref{eq:nlie}]\label{def:weak-meas-sol}
A narrowly continuous curve $\rho^\varepsilon:[0,T]\to\mptrd$, mapping $t\in[0,T]\mapsto\rho_t^\varepsilon\in\mptrd$, is a weak measure solution to \eqref{eq:nlie} if, for every $\varphi\in C^1_c(\Rd)$ and any $t\in[0,T]$, it holds
\begin{equation}\label{eq:weak-form}
    \int_\Rd\varphi(x)d\rho_t^\varepsilon(x)\!-\!\int_\Rd\varphi(x)d\rho_0(x)=-\frac{1}{2}\int_0^t\!\iint_\Rdd(\nabla\varphi(x)\!-\!\nabla\varphi(y))\cdot\nabla W_\varepsilon(x-y)d\rho_r^\varepsilon(y)d\rho_r^\varepsilon(x)dr.
\end{equation}
\end{defn}

\begin{rem}
Our choice for the Definition of weak measure solution to \eqref{eq:nlie} strongly depends on the aim of our paper,  that is showing convergence of solutions of \eqref{eq:nlie} to weak solutions of \eqref{eq:pme2}, according to Definition \ref{def:sol-pme2}. Note that \eqref{eq:nlie} is a continuity equation of the form 
\[
\begin{cases}
\partial_t\rho_t+\nabla\cdot(\rho_t w_t^\varepsilon)=0\\
w_t^\varepsilon=-\nabla W_\varepsilon*\rho_t,
\end{cases}
\]
with a Borel velocity field such that, for $\varepsilon>0$,
\begin{align*}
\int_0^T\int_\Rd|w_t^\varepsilon(x)|\,d\rho_t(x)\,dt&=\int_0^T\int_\Rd|\nabla W_\varepsilon*\rho_t(x)|\,d\rho_t(x)\,dt\\
&\le\frac{CT}{\varepsilon^d}\|V_1\|_{L^1(\Rd)}+\frac{2C}{\varepsilon^{d+1}}\|V_1\|_{L^1(\Rd)}\int_0^T\int_{\Rd}|x|\,d\rho_t(x)\,dx\,dt\\
&\quad+\frac{CT}{\varepsilon^d}\int_\Rd|x|V_1(x)\,dx<+\infty,
\end{align*}
where we used the growth condition on $|\nabla V_\varepsilon|$ and preservation of second order moments (cf.~Lemma \ref{prop:en-ineq-mom-bound}). In turn, \cite[Lemma 8.2.1]{AGS} provides the existence of a continuous representative for distributional solutions of continuity equations with velocity fields in $L^1([0,T];L^1(\rho_t))$. In particular, for test functions time-independent, we get formulation \eqref{eq:weak-form}, where we also used that $\nabla W_\varepsilon$ is odd. Note that this formulation overtakes the loss of regularity at $0$ for $\nabla W_\varepsilon$, as already noticed in \cite{CDFFLS}.
\end{rem}

For the reader's convenience we postpone notations and preliminaries on the multi-species case to section \ref{sec:systems}.

\section{Results on the nonlocal interaction equation}\label{sec:nlie}

The nonlocal interaction equation has been intensively studied, especially in the context of $2$-Wasserstein gradient flows. In \cite{AGS}, the authors deal with \eqref{eq:nlie} for convex potentials that do not produce a blow-up in finite time. In case of more singular convex potentials, a well-posedness theory for weak measure solutions is given by \cite{CDFFLS}. Furthermore, it is worth to mention \cite{bertozzi} and the references therein, where $L^p$ theory for the aggregation equation is provided.

In this paper we consider an interaction potential satisfying assumptions similar to \cite{CDFFLS}, though not convex, with the aim of applying the JKO scheme, \cite{JKO}, in order to obtain a priori estimates on the solutions of \eqref{eq:nlie} and their smoothed version $v^\varepsilon=V_\varepsilon*\rhoe$. In turn, we are able to show convergence towards \eqref{eq:pme2}. The interaction potential we choose is the (rescaled) convolution $W_\varepsilon=V_\varepsilon*V_\varepsilon$, for $W_\varepsilon(x)=\varepsilon^{-d}W_1(x/\varepsilon)$, satisfying \ref{ass:w} that we recall here for convenience:
\begin{enumerate}[label=\textbf{(W)}]
    \item $W_\varepsilon\in C(\Rd;[0,\infty))$, $W_\varepsilon(x)=W_\varepsilon(-x)$ for all $x\in\Rd$, $W_\varepsilon\in C^1(\Rd\setminus\{0\})$ such that $\nabla W_\varepsilon=\nabla V_\varepsilon*V_\varepsilon$ and $\nabla W_\varepsilon\in L^1(\Rd)$.
\end{enumerate}
Let us emphasise that in this section $\varepsilon>0$ is fixed and finite. We assume the initial datum $\rho_0\in\mptrd\cap L^2(\Rd)$, and the interaction energy functional $\mw_\varepsilon:\mptrd\to(-\infty,+\infty]$ is given by
\[
\mw_\varepsilon[\rho]=\frac{1}{2}\int_{\Rd}(W_\varepsilon*\rho)(x)d\rho(x).
\]
\begin{rem}\label{rem:uniform-bound-initial-energy}
Using that $V_\varepsilon$ is even, we observe that the nonlocal interaction energy is nothing but the $L^2$ norm of the \textit{smoothed solution} $v^\varepsilon$, since
\[
\int_\Rd [V_\varepsilon*(V_\varepsilon*\rho)](x)d\rho(x)=\int_\Rd|(V_\varepsilon*\rho)(x)|^2dx.
\]
In view of the equivalence above, for $\rho_0\in\mptrd\cap L^2(\Rd)$ we have a uniform bound from above for the nonlocal interaction energy at the initial datum. More precisely,
\begin{align*}
    \mw_\varepsilon[\rho_0]&=\frac{1}{2}\int_{\Rd}(W_\varepsilon*\rho_0)(x)\rho_0(x)\,dx\\
    &=\frac{1}{2}\int_\Rd |(V_\varepsilon*\rho_0)(x)|^2\,dx\\
    &=\frac{1}{2}\|V_\varepsilon*\rho_0\|_{L^2(\Rd)}^2\le\frac{1}{2}\|V_\varepsilon\|_{L^1}^2\|\rho_0\|_{L^2}^2=\frac{1}{2}\|V_1\|_{L^1}^2\|\rho_0\|_{L^2}^2<\infty.
\end{align*}
\end{rem}

We now proceed with the JKO scheme. First, we define a sequence recursively as follows:
\begin{itemize}
    \item fix a time step $\tau>0$ such that $\rho_{\tau,\varepsilon}^0:=\rho_0$;
    \item for a given $\rhotne\in\mptrd$, choose
\begin{equation}\label{eq:jko}
    \rhotnne\in\argmin_{\rho\in\mptrd}\left\{\frac{d_W^2(\rhotne,\rho)}{2\tau}+\mw_\varepsilon[\rho]\right\}.
\end{equation}
\end{itemize}
The above sequence is well-defined if, for fixed $\bar\rho\in\mptrd$, the penalised energy functional $\rho\in\mptrd\mapsto\frac{d_W^2(\bar{\rho},\rho)}{2\tau}+\mw_\varepsilon[\rho]$ admits minimisers. This can be easily proven by applying the direct method of calculus of variations. For further details we refer to \cite[Lemma 2.3 and Proposition 2.5]{CDFFLS}, although we notice that in our case lower semi-continuity is easier. More precisely, the penalised energy functional is bounded from below and lower semicontinuous w.r.t. the narrow convergence by noticing that $W_\varepsilon$ is continuous and bounded from below, and the $2$-Wasserstein distance is lower semicontinuous. 

Let $T>0$ be fixed, and define a piecewise constant interpolation as follows: assume $N:=\left[\frac{T}{\tau}\right]$ and set
$$
\rhote(t)=\rhotne \qquad t\in((n-1)\tau,n\tau],
$$
being $\rhotne$ defined in \eqref{eq:jko}.

In the next Proposition we prove narrow compactness (in $\tau$) for $\rhote$ and two crucial estimates for its limiting curve, which we shall see it is a solution to \eqref{eq:nlie}. More precisely we prove uniform bounds in $\tau$ and $\varepsilon$ for the interaction energy and second order moments. 
\begin{prop}[Narrow compactness, energy $\&$ moments bound]\label{prop:en-ineq-mom-bound}
There exists an absolutely continuous curve $\tilde{\rho}^\varepsilon: [0,T]\rightarrow\mptrd$ such that the piecewise constant interpolation $\rhote$ admits a subsequence $\rho_{\tau_k}^\varepsilon$ narrowly converging to $\tilde{\rho}^\varepsilon$ uniformly in $t\in[0,T]$ as $k\rightarrow +\infty$. Moreover, for any $t\in[0,T]$, the following uniform bounds in $\tau$ and $\varepsilon$ hold
\begin{subequations}
\begin{align}
    \mw_\varepsilon[\tilde{\rho}^\varepsilon(t)]&\le\frac{1}{2}\|V_1\|_{L^1}^2\|\rho_0\|_{L^2}^2,\\
    m_2(\tilde{\rho}^\varepsilon)&\le2m_2(\rho_0)+2T\|V_1\|_{L^1}^2\|\rho_0\|_{L^2}^2.
\end{align}
\end{subequations}
\end{prop}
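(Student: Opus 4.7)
The proof is a textbook application of the machinery for minimising-movement schemes in $(\mptrd, d_W)$, and I would organise it into four ingredients carried out in the following order: a one-step variational inequality; uniform discrete \textit{a priori} bounds on energy and second moment; an equi-Hölder-$1/2$ estimate in time for the piecewise constant interpolant; and the extraction of a limit curve via a refined Ascoli-type theorem. The first ingredient is immediate: testing \eqref{eq:jko} against the trivial competitor $\rhotne$ gives the classical one-step inequality
\begin{equation*}
\frac{d_W^2(\rhotne,\rhotnne)}{2\tau} + \mw_\varepsilon[\rhotnne] \le \mw_\varepsilon[\rhotne].
\end{equation*}
Telescoping in $n$, together with $\mw_\varepsilon \ge 0$ (inherited from $V_\varepsilon \ge 0$ via $W_\varepsilon = V_\varepsilon * V_\varepsilon$), yields simultaneously the monotonicity $\mw_\varepsilon[\rhotne] \le \mw_\varepsilon[\rho_0]$ and the summable displacement bound $\sum_{k=0}^{N-1} d_W^2(\rho_{\tau,\varepsilon}^{k},\rho_{\tau,\varepsilon}^{k+1}) \le 2\tau\, \mw_\varepsilon[\rho_0]$. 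Combined with Remark~\ref{rem:uniform-bound-initial-energy}, this already delivers the discrete version of the first claimed bound.

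For the moment estimate I would rely on the standard fact that $\rho \mapsto m_2(\rho)^{1/2}$ is $1$-Lipschitz with respect to $d_W$ (a direct consequence of $|y|\le|x|+|x-y|$ integrated against any optimal plan), iterate over the $n$ steps, and apply Cauchy--Schwarz together with the displacement bound above, using $n\tau \le T$; a final squaring via $(a+b)^2 \le 2a^2 + 2b^2$ and Remark~\ref{rem:uniform-bound-initial-energy} then produces the second claimed bound, uniformly in $\tau$ and $\varepsilon$. The same Cauchy--Schwarz manipulation applied to an arbitrary subinterval $[s,t]$, with $n=\lceil s/\tau\rceil$ and $m=\lceil t/\tau\rceil$, gives the equi-Hölder estimate $d_W(\rhote(s),\rhote(t)) \le C(|t-s|+\tau)^{1/2}$, with $C$ independent of $\tau$ and $\varepsilon$.

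Finally, the uniform second moment bound yields tightness of $\{\rhote(t)\}_{\tau>0,\,t\in[0,T]}$ by Chebyshev, and combined with the equi-Hölder estimate I would invoke a refined Ascoli-type theorem for curves valued in $(\mptrd,d_W)$ (see \cite[Proposition~3.3.1]{AGS}) to extract $\tau_k \downarrow 0$ and an absolutely continuous limit $\tilde{\rho}^\varepsilon \in AC([0,T];\mptrd)$ with $\rho_{\tau_k}^\varepsilon(t) \to \tilde\rho^\varepsilon(t)$ narrowly, uniformly in $t \in [0,T]$. Passing the two discrete bounds to the limit is then immediate by narrow lower semicontinuity: for $\mw_\varepsilon$ this follows from continuity and boundedness of $W_\varepsilon$ (or, equivalently, from the identity $\mw_\varepsilon[\rho] = \tfrac12\|V_\varepsilon*\rho\|_{L^2}^2$ together with the narrow convergence of $V_\varepsilon*\rho_{\tau_k}^\varepsilon$), and for $m_2$ from Fatou's lemma. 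I do not anticipate a substantive obstacle here: for fixed $\varepsilon>0$ the kernel $W_\varepsilon$ is bounded and continuous, so the usual pitfalls of unbounded singular potentials are absent. The only mild bookkeeping point is the extra $\sqrt{\tau}$ defect in the Hölder estimate, which vanishes in the continuous limit.
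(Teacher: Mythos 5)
Your proposal is correct and follows essentially the same route as the paper: the one-step minimisation inequality, telescoping to get the energy monotonicity and the summed squared-displacement bound, Cauchy--Schwarz for the equi-H\"older-$1/2$ estimate and the second-moment bound (your $1$-Lipschitz reformulation of $m_2^{1/2}$ is just the squared inequality of Remark~\ref{rem:mom-ineq}), then the refined Ascoli--Arzel\`a theorem \cite[Proposition~3.3.1]{AGS} and narrow lower semicontinuity of $\mw_\varepsilon$ and $m_2$ to pass the bounds to the limit. No gaps.
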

\begin{proof}
From the Definition of the sequence $\{\rhotne\}_{n\in\mathbb{N}}$ it holds
\begin{align}\label{eq:basic-ineq}
    \frac{d_W^2(\rhotne,\rhotnne)}{2\tau}+\mw_\varepsilon[\rhotnne]\le\mw_\varepsilon[\rhotne],
\end{align}
which implies $\mw_\varepsilon[\rhotnne]\le\mw_\varepsilon[\rhotne]$, and, in particular, the following bound for the interaction energy (cf. Remark \ref{rem:uniform-bound-initial-energy})
\begin{align}\label{eq:energy-ineq}
\sup_n\mw_\varepsilon[\rhotne]\le\mw_\varepsilon[\rho_0]\le\frac{1}{2}\|V_1\|_{L^1}^2\|\rho_0\|_{L^2}^2.
\end{align}
By summing up over $k$ inequality \eqref{eq:basic-ineq}, we obtain
\begin{align}
\sum_{k=m}^n\frac{d_W^2(\rho_{\tau,\varepsilon}^k,\rho_{\tau,\varepsilon}^{k+1})}{2\tau}\le \mw_\varepsilon[\rho_{\tau,\varepsilon}^{m}]-\mw_\varepsilon[\rhotnne].
\end{align}
The non-negativity of $W_\varepsilon$ and the energy inequality \eqref{eq:energy-ineq} allow us to improve the above inequality to
\begin{align}\label{eq:total-square-en-estim}
    \sum_{k=m}^n\frac{d_W^2(\rho_{\tau,\varepsilon}^k,\rho_{\tau,\varepsilon}^{k+1})}{2\tau}\le\mw_\varepsilon[\rho_0].
\end{align}
This implies
\begin{align*}
d_W^2(\rho_0,\rhote(t))\le 2T \mw_\varepsilon[\rho_0]\le T\|V_1\|_{L^1}^2\|\rho_0\|_{L^2}^2,
\end{align*}
whence we obtain that second order moments are uniformly bounded on $[0,T]$ in view of Remark \ref{rem:mom-ineq}, i.e. 
\begin{equation}\label{eq:mom-inequality}
    m_2(\rhote(t))\le 2m_2(\rho_0)+2d_W^2(\rho_0,\rhote(t))
    \le 2m_2(\rho_0)+2T\|V_1\|_{L^1}^2\|\rho_0\|_{L^2}^2.
\end{equation}
Now, let us consider $0\le s<t$ such that $s\in((m-1)\tau,m\tau]$ and $t\in((n-1)\tau,n\tau]$ (which implies $|n-m|<\frac{|t-s|}{\tau}+1$); by Cauchy-Schwarz inequality and \eqref{eq:total-square-en-estim}, we obtain
\begin{equation}\label{eq:holder-cont}
\begin{split}
d_W(\rhote(s),\rhote(t))&\le\sum_{k=m}^{n-1}d_W(\rho_{\tau,\varepsilon}^{k},\rho_{\tau,\varepsilon}^{k+1})\le\left(\sum_{k=m}^{n-1}d_W^2(\rho_{\tau,\varepsilon}^{k},\rho_{\tau,\varepsilon}^{k+1})\right)^{\frac{1}{2}}|n-m|^{\frac{1}{2}}\\&\le c \left(\sqrt{|t-s|}+\sqrt{\tau}\right),
\end{split}
\end{equation}\noindent
where $c$ is a positive constant.
Thus $\rhote$ is $\frac{1}{2}$-H\"{o}lder equi-continuous, up to a negligible error of order $\sqrt{\tau}$. By using a refined version of Ascoli-Arzel\`{a}'s theorem, \cite[Proposition 3.3.1]{AGS}, we obtain $\rhote$ admits a subsequence narrowly converging to a limit $\tilde{\rho}^\varepsilon$ as $\tau\to0^+$ uniformly on $[0,T]$.
Since $|\cdot|^2$ and $W_\varepsilon$ are lower semicontinuous and bounded from below, we actually have for any $t\in[0,T]$
\begin{align*}
    &\liminf_{k\to+\infty}\int_\Rd|x|^2\,d\rho_{\tau_k}^\varepsilon(x)\ge\int_\Rd|x|^2\,d\tilde{\rho}^\varepsilon(x)\\
    &\liminf_{k\to+\infty}\mw[\rho_{\tau_k}^\varepsilon]\ge\mw_\varepsilon[\tilde{\rho}^\varepsilon],
\end{align*}
whence the assertion follows.
\end{proof}
Next, we show that $\tilde{\rho}^\varepsilon$ provided by Proposition \ref{prop:en-ineq-mom-bound} is indeed a solution to \eqref{eq:nlie}. We stress that this result is not surprising and it is not the main purpose of this paper. Nevertheless, our interaction potential $W_\varepsilon$ does not satisfy the convexity assumption required in \cite{AGS,CDFFLS}, where there is a rigorous theory for weak measure solutions to \eqref{eq:nlie}. Therefore, for the sake of completeness we show that the lack of convexity does not affect existence of solutions to \eqref{eq:nlie}. In fact, we can pass to the limit in the Euler-Lagrange equation associated to \eqref{eq:jko}. 
\begin{thm}\label{thm:existence-nlie}
The curve $\tilde{\rho}^\varepsilon$ is a weak measure solution to \eqref{eq:nlie} according to Definition \ref{def:weak-meas-sol}.
\end{thm}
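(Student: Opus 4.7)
My plan is the standard JKO-to-PDE argument, adapted to the nonconvex potential $W_\varepsilon$. In order: (i) write the Euler--Lagrange equation for each minimiser in \eqref{eq:jko}; (ii) specialise the test vector field to a gradient and Taylor-expand to obtain a discrete version of \eqref{eq:weak-form}; (iii) telescope over the JKO time steps and recognise a Riemann sum in time; (iv) pass to the limit $\tau_k\to 0$ along the subsequence supplied by Proposition~\ref{prop:en-ineq-mom-bound}.

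\textbf{Step (i): Euler--Lagrange.} Perturbing $\rhotnne$ along the flow $s\mapsto\rho_s^\varepsilon:=(\mathrm{id}+s\zeta)_\#\rhotnne$ of a smooth compactly supported vector field $\zeta$, the competitor $\rho_s^\varepsilon$ is admissible in \eqref{eq:jko}. Combining the standard one-sided expansion of $d_W^2(\rhotne,\rho_s^\varepsilon)$ in terms of any plan $\gammatnn\in\Gamma_0(\rhotnne,\rhotne)$ with the immediate smooth expansion of $\mw_\varepsilon[\rho_s^\varepsilon]$, and testing in both signs of $s$ via minimality, produces
\[
\frac{1}{\tau}\int_\Rdd(x-y)\cdot\zeta(x)\,d\gammatnn=-\frac{1}{2}\iint_\Rdd(\zeta(x)-\zeta(y))\cdot\nabla W_\varepsilon(x-y)\,d\rhotnne(x)\,d\rhotnne(y).
\]
The symmetric form on the right is where evenness of $W_\varepsilon$ enters: although $W_\varepsilon\in C^1(\Rd\setminus\{0\})$ only, we have $\nabla W_\varepsilon=\nabla V_\varepsilon*V_\varepsilon\in L^\infty\cap C(\Rd)$ with $\nabla W_\varepsilon(0)=0$, so the antisymmetric factor $\zeta(x)-\zeta(y)$ makes the integrand bounded and continuous on $\Rdd$.

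\textbf{Steps (ii)--(iii): discrete weak formulation.} Choosing $\zeta=\nabla\varphi$ for $\varphi\in C^\infty_c(\Rd)$, the marginal condition on $\gammatnn$ combined with Taylor's formula $\varphi(y)-\varphi(x)=\nabla\varphi(x)\cdot(y-x)+R_n(x,y)$ with $|R_n(x,y)|\le\tfrac12\|D^2\varphi\|_\infty|x-y|^2$ converts the one-step increment into
\[
\int_\Rd\varphi\,d\rhotnne-\int_\Rd\varphi\,d\rhotne=-\frac{\tau}{2}\iint_\Rdd(\nabla\varphi(x)-\nabla\varphi(y))\cdot\nabla W_\varepsilon(x-y)\,d\rhotnne\,d\rhotnne-\int_\Rdd R_n\,d\gammatnn.
\]
Telescoping for $n=0,\dots,N-1$ with $N=[t/\tau]$, the left-hand side becomes $\int_\Rd\varphi\,d\rhote(t)-\int_\Rd\varphi\,d\rho_0$, while the cumulative remainder is controlled by \eqref{eq:total-square-en-estim}:
\[
\sum_{n=0}^{N-1}\Big|\int_\Rdd R_n\,d\gammatnn\Big|\le\frac{\|D^2\varphi\|_\infty}{2}\sum_{n=0}^{N-1}d_W^2(\rhotne,\rhotnne)\le\tau\,\|D^2\varphi\|_\infty\,\mw_\varepsilon[\rho_0]\xrightarrow[\tau\to 0]{}0.
\]
The main term is then a Riemann sum in $r\in(0,N\tau]$ for the map $r\mapsto\tfrac12\iint(\nabla\varphi(x)-\nabla\varphi(y))\cdot\nabla W_\varepsilon(x-y)\,d\rhote(r)\,d\rhote(r)$.

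\textbf{Step (iv) and main obstacle.} By Proposition~\ref{prop:en-ineq-mom-bound}, along $\tau_k$ one has $\rho_{\tau_k}^\varepsilon(r)\to\tilde\rho^\varepsilon(r)$ narrowly and uniformly in $r\in[0,T]$, hence $\rho_{\tau_k}^\varepsilon(r)\otimes\rho_{\tau_k}^\varepsilon(r)\to\tilde\rho^\varepsilon(r)\otimes\tilde\rho^\varepsilon(r)$ narrowly. Since the integrand is bounded and continuous on $\Rdd$, dominated convergence turns the Riemann sum into $-\tfrac12\int_0^t\iint(\nabla\varphi(x)-\nabla\varphi(y))\cdot\nabla W_\varepsilon(x-y)\,d\tilde\rho^\varepsilon(r)\,d\tilde\rho^\varepsilon(r)\,dr$, proving \eqref{eq:weak-form} for $\varphi\in C^\infty_c(\Rd)$; a density argument (uniform approximation of $\varphi\in C^1_c$ with gradients uniformly compactly supported) extends the identity to the class of test functions required by Definition~\ref{def:weak-meas-sol}. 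The single technical point to watch is keeping the cumulative Taylor remainder of size $O(\tau)$ rather than $O(1)$: this is precisely what the quadratic-distance bound \eqref{eq:total-square-en-estim}, inherited from the JKO minimisation \emph{without any convexity assumption on} $W_\varepsilon$, provides, and is what makes the scheme work in the present nonconvex setting.
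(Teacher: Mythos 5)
Your proposal is correct and follows essentially the same route as the paper's proof: the Euler--Lagrange equation for the JKO minimiser via the pushforward perturbation $(\mathrm{id}+s\zeta)_\#\rhotnne$, the choice $\zeta=\nabla\varphi$ with a Taylor remainder controlled by the telescoped bound \eqref{eq:total-square-en-estim}, and the passage to the limit along the subsequence from Proposition~\ref{prop:en-ineq-mom-bound}. The only (immaterial) difference is that you justify the limit of the difference quotients of $W_\varepsilon$ by boundedness and continuity of $\nabla W_\varepsilon=\nabla V_\varepsilon*V_\varepsilon$, where the paper invokes Egorov's theorem.
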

\begin{proof}
Let us consider two consecutive elements of the sequence $\{\rhotne\}_{n\in\mathbb{N}}$ defined in \eqref{eq:jko}, i.e. $\rhotne$ and $\rhotnne$. We perturb $\rhotnne$ by using a map $P^\sigma=id+\sigma\zeta$, for some $\zeta\in C_c^\infty(\Rd;\Rd)$ and $\sigma\ge0$, that is we consider the perturbation
\begin{equation}\label{eq:perturbation}
 \rho^\sigma:= P_\#^\sigma\rhotnne.
 \end{equation}
Being $\rhotnne$ a minimiser of \eqref{eq:jko}, we have 
\begin{equation}\label{eq:optimality}
\frac{1}{2\tau}\left[\frac{d_W^2(\rhotne,\rho^\sigma)- d_W^2(\rhotne, \rhotnne)}{\sigma}\right]+\frac{\mw_\varepsilon[\rho^\sigma]-\mw_\varepsilon[\rhotnne]}{\sigma}\ge0.
\end{equation}
First, we consider the interaction terms in \eqref{eq:optimality} 
\begin{equation}\label{eq:interaction-terms}
\begin{split}
&\frac{1}{2\sigma}\int_\Rd W_\varepsilon*\rho^\sigma(x) d\rho^\sigma(x)-\frac{1}{2\sigma}\int_{\Rd}W_\varepsilon *\rhotnne(x) d\rhotnne(x)\\
&=\frac{1}{2}\iint_\Rdd\left[\frac{W_\varepsilon(P^\sigma(x)-P^\sigma(y))-W_\varepsilon(x-y)}{\sigma}\right]\,d\rhotnne(y)\,d\rhotnne(x)
\\&=\frac{1}{2}\iint_{\Rdd}\left[\frac{W_\varepsilon(x-y +\sigma(\zeta(x)-\zeta(y)))-W_\varepsilon(x-y)}{\sigma}\right]\,d\rhotnne(y)\,d\rhotnne(x).
\end{split}
\end{equation}
Since the interaction potential satisfies $W_\varepsilon\in C(\Rd)\cap C^1(\Rd\setminus\{0\})$, for all $(x,y)\in \Rdd$ it holds
\begin{equation}\label{eq:converg-W}
\frac{W_\varepsilon(x-y +\sigma(\zeta(x)-\zeta(y)))-W_\varepsilon(x-y)}{\sigma} \underset{\sigma\to 0}{\longrightarrow}\nabla W_\varepsilon(x-y)\cdot(\zeta(x)-\zeta(y)).
\end{equation}
By means of Egorov's theorem, for every $\eta>0$ there exists $B_\eta\subset\Rdd$ measurable such that $$\iint_{B_\eta}\,d\rhotnne(y)\,d\rhotnne(x)<\eta$$ and the convergence \eqref{eq:converg-W} is uniform on $\Rdd\setminus B_{\eta}$. The integral on $B_\eta$ can be neglected in the limit-integral interchange since the sequence in \eqref{eq:converg-W} is uniformly bounded in $\sigma$. Thus, we obtain
\begin{equation*}
\begin{split}
&\frac{1}{2}\iint_{\Rdd}\left(\frac{W_\varepsilon(x-y +\sigma(\zeta(x)-\zeta(y)))-W_\varepsilon(x-y)}{\sigma}\right)\,d\rhotnne(y)\,d\rhotnne(x)\\
&\quad\underset{\sigma\to0}{\longrightarrow} \frac{1}{2}\iint_{\Rdd}\nabla W_\varepsilon(x-y)\cdot(\zeta(x)-\zeta(y))\,d\rhotnne(y)\,d\rhotnne(x).
\end{split}
\end{equation*}
Regarding the terms involving the $2$-Wasserstein distance, let us consider an optimal transport plan $\gamma_{\tau,\varepsilon}^n\in\Gamma_0(\rhotne,\rhotnne)$ between $\rhotne$ and $\rhotnne$. By Definition of $d_W$, we have
\begin{equation*}
\begin{split}
\frac{1}{2\tau}\left[\frac{d_W^2(\rhotne, \rho^\sigma)-d_W^2(\rhotne, \rhotnne)}{\sigma}\right]&\le\frac{1}{2\tau\sigma}\iint_\Rdd\left(|x-P^\sigma(y)|^2 -|x-y|^2\right)\,d\gamma_{\tau,\varepsilon}^n(x,y)\\
&=\frac{1}{2\tau\sigma}\iint_\Rdd\left(|x-y-\sigma\zeta(y)|^2 -|x-y|^2\right)\,d\gamma_{\tau,\varepsilon}^n(x,y)\\
&=-\frac{1}{\tau}\iint_\Rdd(x-y)\cdot \zeta(y)\,d\gamma_{\tau,\varepsilon}^n(x,y)+o(\sigma),
\end{split}
\end{equation*}
where in the last equality we applied a first order Taylor expansion (note that $\zeta \in C^\infty$). By sending $\sigma$ to $0$ it holds
\begin{equation*}
    \frac{1}{\tau}\iint_\Rdd(x-y)\cdot \zeta(y)\,d\gamma_{\tau,\varepsilon}^n(x,y)\le\frac{1}{2}\iint_{\Rdd}\nabla W_\varepsilon(x-y)\cdot(\zeta(x)-\zeta(y))\,d\rhotnne(y)\,d\rhotnne(x).
\end{equation*}
Repeating the same computation for $\sigma\le0$, we actually obtain an equality, that is, for $\zeta=\nabla\varphi$
\begin{equation}\label{eq:sol-discreta-sigma}
\begin{split}
\frac{1}{\tau}\!\iint_\Rdd(x-y)\cdot \nabla\varphi(y)d\gamma_{\tau,\varepsilon}^n(x,y)\!=\!\frac{1}{2}\!\iint_{\Rdd}\nabla W_\varepsilon(x-y)\cdot(\nabla\varphi(x)\!-\!\nabla\varphi(y))d\rhotnne(y)d\rhotnne(x).
\end{split}
\end{equation}
Note that the H\"older estimate \eqref{eq:holder-cont} and $(x-y)\cdot \nabla\varphi(y)=\varphi(x)-\varphi(y)+o(|x-y|^2)$ imply
\[
\frac{1}{\tau}\iint_\Rdd(x-y)\cdot \nabla\varphi(y)\,d\gamma_{\tau,\varepsilon}^n(x,y)=\frac{1}{\tau}\int_\Rd\varphi(x)\,d(\rhotne-\rhotnne)(x) + O(\tau).
\]
Now, let $0\le s<t$ be fixed, with
$$
h=\left[\frac{s}{\tau}\right]+1\quad \text{and}\quad k=\left[\frac{t}{\tau}\right].
$$
Taking into account the last equality, by summing in \eqref{eq:sol-discreta-sigma} over $j$ from $h$ to $k$, we obtain
\begin{align*}
    \int_\Rd\varphi(x)\,d\rho_{\tau,\varepsilon}^{k+1}-&\int_\Rd\varphi(x)\,d\rho_{\tau,\varepsilon}^h+O(\tau^2)=\\
    &-\sum_{j=h}^k\frac{\tau}{2}\iint_{\Rdd}\nabla W_\varepsilon(x-y)\cdot(\nabla\varphi(x)\!-\!\nabla\varphi(y))\,d\rho_{\tau,\varepsilon}^{j+1}(y)\,d\rho_{\tau,\varepsilon}^{j+1}(x),
\end{align*}
which is equivalent to
\begin{align*}
    \int_\Rd\varphi(x)\,d\rhote(t)(x)-&\int_\Rd\varphi(x)\,d\rhote(s)(x)+O(\tau^2)=\\
    &-\frac{1}{2}\int_s^t\iint_{\Rdd}\nabla W_\varepsilon(x-y)\cdot(\nabla\varphi(x)\!-\!\nabla\varphi(y))\,d\rhote(r)(y)\,d\rhote(r)(x)\,dr.
\end{align*}
Up to pass to a subsequence, the result follows by considering the limit as $\tau\to0^+$ and choosing $s=0$.
\end{proof}

\section{Compactness for $\rho^\varepsilon$ and $v^\varepsilon$}\label{sec:compactness}

The sequence of solutions $\{\tilde{\rho}^\varepsilon\}_{\varepsilon>0}$ to \eqref{eq:nlie} constructed in section \ref{sec:nlie} is the candidate approximating \textit{weak solution} of \eqref{eq:pme2}, if we use higher regularity of its smoothed version, $V_\varepsilon*\tilde{\rho}^\varepsilon$, in the  limit $\varepsilon\to0^+$. In this section we deal with the compactness for both the sequences. For the ease of presentation, from this point on we drop the symbol tilde used in the previous section to denote the sequence of solutions to \eqref{eq:nlie} in Theorem \ref{thm:existence-nlie}.

First, we prove that $\{\rhoe\}_{\varepsilon>0}$ is relatively compact in $C([0,T],\mptrd)$, again by means of a refined version of the Ascoli-Arzelà theorem, \cite[Proposition 3.3.1]{AGS}.
\begin{prop}\label{prop:limit-rho}
There exists an absolutely continuous curve $\tilde{\rho}:[0,T]\to\mptrd$ such that the sequence $\{\rho^\varepsilon\}_{\varepsilon>0}$ admits a subsequence $\{\rho^{\varepsilon_k}\}$ such that $\rho^{\varepsilon_k}(t)$ narrow converges to $\tilde{\rho}(t)$ for any $t\in[0,T]$ as $k\to+\infty$.
\end{prop}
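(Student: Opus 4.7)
The plan is to invoke the same refined Ascoli--Arzelà statement, \cite[Proposition 3.3.1]{AGS}, that was used to pass to the limit in $\tau$ in Proposition \ref{prop:en-ineq-mom-bound}, but now at the level of the $\varepsilon$-sequence. Two ingredients are needed: uniform tightness of $\{\rho^\varepsilon(t)\}$ for each $t\in[0,T]$, and uniform $\tau$-independent equi-continuity in some (pseudo-)distance compatible with the narrow topology, namely $d_W$.

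For the first ingredient I would use the uniform second moment bound in Proposition \ref{prop:en-ineq-mom-bound}: there is a constant $M=2m_2(\rho_0)+2T\|V_1\|_{L^1}^2\|\rho_0\|_{L^2}^2$ such that $m_2(\rho^\varepsilon(t))\le M$ for every $t\in[0,T]$ and every $\varepsilon>0$. By Markov's inequality this immediately yields that $\{\rho^\varepsilon(t):t\in[0,T],\,\varepsilon>0\}$ is tight in $\mptrd$.

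For the second ingredient I would transfer the Hölder estimate \eqref{eq:holder-cont} for the discrete interpolants to the limiting curves $\rho^\varepsilon$. For fixed $\varepsilon$, the piecewise constant interpolation $\rho_\tau^\varepsilon$ satisfies $d_W(\rho_\tau^\varepsilon(s),\rho_\tau^\varepsilon(t))\le c(\sqrt{|t-s|}+\sqrt{\tau})$ with $c$ depending only on $\|V_1\|_{L^1}$, $\|\rho_0\|_{L^2}$, and $T$ (not on $\varepsilon$ or $\tau$). Passing to the narrow limit along the subsequence $\tau_k\to 0$ and using the fact that $d_W$ is lower semicontinuous under narrow convergence for measures with uniformly bounded second moments (see \cite{AGS}), I obtain
\begin{equation*}
d_W(\rho^\varepsilon(s),\rho^\varepsilon(t))\le c\sqrt{|t-s|}\qquad\forall\,s,t\in[0,T],\ \forall\,\varepsilon>0.
\end{equation*}
Thus the family $\{\rho^\varepsilon\}_{\varepsilon>0}$ is uniformly $\tfrac12$-Hölder continuous from $[0,T]$ into $(\mptrd,d_W)$.

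Combining uniform tightness with uniform equi-continuity, the refined Ascoli--Arzelà theorem \cite[Proposition 3.3.1]{AGS} produces a subsequence $\{\rho^{\varepsilon_k}\}$ and a limit curve $\tilde\rho:[0,T]\to\mptrd$ such that $\rho^{\varepsilon_k}(t)\to\tilde\rho(t)$ narrowly for every $t\in[0,T]$; the same estimate transferred once more to the limit gives absolute continuity (indeed $\tfrac12$-Hölder continuity) of $\tilde\rho$ with respect to $d_W$. The main (and only mildly delicate) point is the lower semicontinuity step used to move the Hölder estimate from $\rho_\tau^\varepsilon$ to $\rho^\varepsilon$; this is standard once the uniform second moment bound is available, so no substantial obstacle is expected.
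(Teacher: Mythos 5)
Your proposal is correct and follows essentially the same route as the paper: tightness from the uniform second-moment bound, transfer of the H\"older estimate \eqref{eq:holder-cont} from $\rho_\tau^\varepsilon$ to $\rho^\varepsilon$ via lower semicontinuity of $d_W$ under narrow convergence (which the paper justifies through stability of optimal plans), and then the refined Ascoli--Arzel\`a theorem \cite[Proposition 3.3.1]{AGS}. Your limiting estimate $d_W(\rho^\varepsilon(s),\rho^\varepsilon(t))\le c\sqrt{|t-s|}$ is in fact what the discrete bound yields once $\sqrt{\tau}\to0$.
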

\begin{proof}
Firstly, a subset $K\subset\mptrd$ is relatively compact if and only if it is tight, due to Prokhorov's theorem. The sequence $\rho^\varepsilon$ is tight since its second order moments are uniformly bounded according to Proposition \ref{prop:en-ineq-mom-bound}.
Secondly, the equi-continuity of $\rho^\varepsilon$ follows from that of $\rhote$, cf.~ \eqref{eq:holder-cont}, by lower semi-continuity of the $2$-Wasserstein distance. More precisely, for any $\varepsilon>0$ and $s,t\in[0,T]$, let us consider a sequence of optimal transport plans $\gamma_\tau^\varepsilon\in\Gamma_0(\rhote(s),\rhote(t))$ such that
\[
d_W^2(\rhote(s),\rhote(t))=\iint_{\Rdd}|x-y|^2\,d\gamma_\tau^\varepsilon(x,y).
\]
By stability of optimal transport plans, cf.~\cite[Corollary 5.21]{V2} , we get $\gamma_\tau^\varepsilon\rightharpoonup\gamma^\varepsilon$ as $\tau\to0^+$, and
\begin{align*}
    \liminf_{\tau\to0}d_W^2(\rhote(s),\rhote(t))&=\liminf_{\tau\to0}\iint_{\Rdd}|x-y|^2\,d\gamma_\tau^\varepsilon(x,y)\\
    &\ge\iint_{\Rdd}|x-y|^2\,d\gamma^\varepsilon(x,y)\\
    &\ge d_W^2(\rhoe(s),\rhoe(t)).
\end{align*}
In particular, from \eqref{eq:holder-cont} in Proposition \ref{prop:en-ineq-mom-bound} we obtain
\[
d_W(\rhoe(s),\rhoe(t))\le c|t-s|,
\]
for a positive constant $c$. Finally, the assertion follows by applying the aforementioned version of the Ascoli-Arzelà theorem.
\end{proof}

Next we consider the corresponding smoothed (sub)sequence $\{v^\varepsilon\}_\varepsilon$, being $v^\varepsilon(t):=V_\varepsilon*\rhoe(t)$ for any $t\in[0,T]$. Note that we removed the subscript $k$ for ease of presentation. For the latter sequence we obtain an $H^1$ estimate by using the \textit{flow interchange technique}, developed by Matthes, McCann and Savar\'{e} in \cite{MMCS}, cf. also \cite{DFM,DiFranEspFag,CDFEFS20} for further details. The strategy is to compute the dissipation of the interaction energy functional $\mw_\varepsilon$ along a solution of an \textit{auxiliary gradient flow}, in order to use the \textit{Evolution Variational Inequality (EVI)} to obtain the desired estimate, leading to compactness.

Since the seminal work by Jordan, Kinderlehrer, and Otto, \cite{JKO},  it is known that the heat equation can be regarded as a $2$-Wasserstein steepest descent of the opposite of the Boltzmann entropy, i.e. $\mh[\rho]=\int_{\Rd}\rho(x)\log\rho(x)\,dx$. The entropy functional is $0$-convex along geodesics and it possesses a unique $0$-flow, denoted by $S_\mh$, given by the heat semigroup (cf.~\cite{AGS,DS,DFM}). For the reader's convenience we recall the Definition of $\lambda$-flow for a general functional $\mathcal{F}$.
\begin{defn}[$\lambda$-flow]
A semigroup $S_{\mathcal{F}}:[0,+\infty]\times\mptrd\to\mptrd$ is a $\lambda$-flow for a functional $\mathcal{F}:\mptrd\to\R\cup\{+\infty\}$ with respect to the distance $d_W$ if, for an arbitrary $\rho\in\mptrd$, the curve $t\mapsto S_{\mathcal{F}}^t\rho$ is absolutely continuous on $[0,+\infty[$ and it satisfies the evolution variational inequality (EVI)
\begin{equation}
\frac{1}{2}\frac{d^+}{dt}d_W^2(S_{\mathcal{F}}^t\rho,\bar{\rho})+\frac{\lambda}{2}d_W^2(S_{\mathcal{F}}^t\rho,\bar{\rho})\le \mathcal{F}(\bar{\rho})-\mathcal{F}(S_{\mathcal{F}}^t\rho)
\end{equation}
for all $t>0$, with respect to every reference measure $\bar{\rho}\in\mptrd$ such that $\mathcal{F}(\bar{\rho})<\infty$.
\end{defn}
Below we use the flow interchange by considering the heat equation as auxiliary flow, and the entropy as auxiliary functional, i.e.
\begin{equation}\label{eq:aux-func}
\mh[\rho]=
\begin{cases}
\int_{\Rd}\rho(x)\log\rho(x)\,dx, &\rho\log\rho\in L^1(\Rd);\\
+\infty & \text{otherwise}.
\end{cases}
\end{equation}
\begin{rem}\label{eq:controlbelowentropy}
We remind the reader that the entropy is controlled from below by the second order moment of $\rho$, denoted by $m_2(\rho)$. More precisely, in~\cite[Proposition 4.1]{JKO} it is shown that
$$
\mh(\rho)\ge -C(m_2(\rho) + 1)^{\beta},
$$
for every $\rho\in\mpdtard$, $\beta\in(\frac{d}{d+2},1)$ and $C<+\infty$, depending only on the space dimension $d$. We use this bound in order to have a uniform control from below for the entropy.
\end{rem}
In the following, for any $\nu\in\mptrd$ such that $\mh(\nu)<+\infty$, we denote by $S_{\mh}^t\nu$ the solution at time $t$ of the heat equation coupled with an initial value $\nu$ at $t=0$. Moreover, for every $\rho\in\mptrd$, we define the dissipation of $\mw_\varepsilon$ along $S_{\mh}$ by
$$
D_{\mh}\mw_\varepsilon(\rho):=\limsup_{s\downarrow0}\left\{\frac{\mw_\varepsilon[\rho]-\mw_\varepsilon[S_{\mh}^s\rho]}{s}\right\}.
$$
We can now prove a uniform bound for $\{v^\varepsilon\}_\varepsilon$ in $L^2([0,T];H^1(\Rd))$.
\begin{lem}\label{lem:h1-bound}
Let $\rho_0\in\mpdtard\cap L^2(\Rd)$ such that $\mh[\rho_0]<\infty$. There exists a constant $C=C(\rho_0,V_1,T)$ such that, for any $\varepsilon>0$,
\begin{align}
    \|v^\varepsilon\|_{L^2([0,T];H^1(\Rd))}\le C(\rho_0,V_1,T).
\end{align}
Therefore, there exists a subsequence $\{v^{\varepsilon_k}\}_k$ and a curve $v\in L^2([0,T];H^1(\Rd))$ such that $v^{\varepsilon_k}\rightharpoonup v$ in $L^2([0,T];H^1(\Rd))$.
\end{lem}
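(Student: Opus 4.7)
The plan is to apply the flow interchange technique of Matthes--McCann--Savaré at the discrete JKO level, using the heat semigroup $S_{\mathcal{H}}$ as the auxiliary $0$-flow and the Boltzmann entropy $\mathcal{H}$ as the auxiliary functional. The central algebraic observation is that along the heat flow,
\begin{equation*}
\frac{d}{ds}\Big|_{s=0}\mathcal{W}_\varepsilon[S_{\mathcal{H}}^s\rho]=\frac{1}{2}\frac{d}{ds}\Big|_{s=0}\|V_\varepsilon*S_{\mathcal{H}}^s\rho\|_{L^2}^2=\int(V_\varepsilon*\rho)\,\Delta(V_\varepsilon*\rho)\,dx=-\|\nabla(V_\varepsilon*\rho)\|_{L^2}^2,
\end{equation*}
because $V_\varepsilon$ commutes with $\Delta$ (since $S_{\mathcal{H}}^s$ acts as a convolution). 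Hence $D_{\mathcal{H}}\mathcal{W}_\varepsilon[\rho]=\|\nabla(V_\varepsilon*\rho)\|_{L^2(\Rd)}^2$, which is precisely the quantity we need to control.

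Next, using the minimality of $\rhotnne$ in \eqref{eq:jko} with the competitor $S_{\mathcal{H}}^s\rhotnne$, and then dividing by $s>0$ and taking $\limsup_{s\downarrow 0}$, I would obtain
\begin{equation*}
\tau\,D_{\mathcal{H}}\mathcal{W}_\varepsilon[\rhotnne]\le \tfrac{1}{2}\limsup_{s\downarrow 0}\tfrac{d_W^2(\rhotne,S_{\mathcal{H}}^s\rhotnne)-d_W^2(\rhotne,\rhotnne)}{s}\le \mathcal{H}[\rhotne]-\mathcal{H}[\rhotnne],
\end{equation*}
where the second inequality is the EVI for $S_{\mathcal{H}}$ with $\lambda=0$. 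Telescoping in $n$ from $0$ to $N-1=[T/\tau]-1$ then yields
\begin{equation*}
\sum_{n=0}^{N-1}\tau\,\|\nabla(V_\varepsilon*\rhotnne)\|_{L^2(\Rd)}^2\le \mathcal{H}[\rho_0]-\mathcal{H}[\rho_{\tau,\varepsilon}^N].
\end{equation*}
The right-hand side is bounded uniformly in $\tau$ and $\varepsilon$: $\mathcal{H}[\rho_0]<\infty$ by assumption, and the lower bound on $\mathcal{H}$ recalled in Remark \ref{eq:controlbelowentropy}, combined with the uniform moment bound of Proposition \ref{prop:en-ineq-mom-bound}, controls $-\mathcal{H}[\rho_{\tau,\varepsilon}^N]$ by a constant depending only on $\rho_0$, $V_1$, $T$. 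In terms of the piecewise constant interpolant this reads $\int_0^T\|\nabla v_\tau^\varepsilon(t)\|_{L^2}^2\,dt\le C(\rho_0,V_1,T)$, while the $L^2$ part of the norm, $\|v_\tau^\varepsilon(t)\|_{L^2}^2=2\mathcal{W}_\varepsilon[\rhote(t)]$, is controlled directly by Proposition \ref{prop:en-ineq-mom-bound}.

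Finally I would pass to the limit $\tau\to 0^+$: since $\rhote(t)\rightharpoonup\rhoe(t)$ narrowly and $V_\varepsilon\in C_b$, we have $v_\tau^\varepsilon(t,x)\to v^\varepsilon(t,x)$ pointwise, so by weak lower semicontinuity of the $L^2([0,T];H^1(\Rd))$ norm (or Fatou on $\|v_\tau^\varepsilon\|_{H^1}^2$) the same bound transfers to $v^\varepsilon$, uniformly in $\varepsilon$. The stated weak compactness then follows from reflexivity of $L^2([0,T];H^1(\Rd))$. The main delicate point is the flow-interchange step itself: one must justify the computation of $D_{\mathcal{H}}\mathcal{W}_\varepsilon$ and the inequality above for general measures $\rhotnne$ by exploiting the instantaneous smoothing of the heat semigroup (so that $S_{\mathcal{H}}^s\rhotnne$ is smooth for $s>0$, making the derivative computation legitimate) together with monotonicity in $s$; once this is in place the rest is bookkeeping of the telescoping sum and the entropy lower bound.
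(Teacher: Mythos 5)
Your proposal is correct and follows essentially the same route as the paper: the $L^2$ part of the norm from the energy monotonicity of the JKO scheme, and the gradient part from the flow interchange with the heat semigroup and Boltzmann entropy, followed by the EVI, telescoping, the entropy lower bound in terms of second moments, and lower semicontinuity to pass $\tau\to0^+$. The one point you flag as delicate — justifying $D_{\mh}\mw_\varepsilon(\rho)\ge\|\nabla(V_\varepsilon*\rho)\|_{L^2}^2$ for a general measure — is handled in the paper exactly as you suggest, by writing the difference quotient as $\int_0^1$ of the derivative along the smoothed flow and using $L^2$ lower semicontinuity of the $H^1$ seminorm as $s\downarrow0$.
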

\begin{proof}
From Proposition \ref{prop:en-ineq-mom-bound} we infer the uniform bound in $\tau$ and $\varepsilon$
\begin{equation*}
\begin{split}
    \|V_\varepsilon*\rhote\|_{L^2([0,T];L^2(\Rd))}^2&=\int_0^T\int_\Rd|[V_\varepsilon*\rhote(t)](x)|^2\,dx\,dt=2\int_0^T\mw_\varepsilon[\rhote(t)]\,dt\\
    &\le2T\mw_\varepsilon[\rho_0]\le T\|V_1\|_{L^1(\Rd)}^2\|\rho_0\|_{L^2(\Rd)}^2.
\end{split}
\end{equation*}
Thus, there exists a subsequence $\tau_k$ such that $V_\varepsilon*\rhoe_{\tau_k}\rightharpoonup w^\varepsilon$ in $L^2([0,T];L^2(\Rd))$ as $\tau_k\to0$. The limit $w^\varepsilon\equiv v^\varepsilon$ due to uniqueness of limit and Proposition \ref{prop:limit-rho}. Up to pass to a subsequence, we have 
\begin{equation}\label{eq:l2-boundveps}
\|v^\varepsilon\|_{L^2([0,T];L^2(\Rd))}^2\le T\|V_1\|_{L^1(\Rd)}^2\|\rho_0\|_{L^2(\Rd)}^2,
\end{equation}
since the norm is weakly lower semicontinuous.
Now, we obtain a uniform bound for $\nabla v^\varepsilon$. For all $s>0$, if we consider $S_\mh^s\rhotnne$ as competitor of $\rhotnne$ in the minimisation problem \eqref{eq:jko}, as direct consequence of the Definition of the sequence $\{\rhotne\}_{n\in\mathbb{N}}$ we have
$$
\frac{1}{2\tau}d_W^2(\rhotnne,\rhotne)+\mw_\varepsilon[\rhotnne]\le\frac{1}{2\tau}d_W^2(S_{\mh}^s\rhotnne,\rhotne)+\mw_\varepsilon[S_{\mh}^s\rhotnne],
$$
whence, dividing by $s>0$ and passing to the $\limsup$ as $s\downarrow0$,
\begin{equation}\label{eq:flow-interchange}
\tau D_{\mh}\mw_\varepsilon(\rhotnne)\le\frac{1}{2}\frac{d^+}{dt}\Big(d_W^2(S_{\mh}^t\rhotnne,\rhotne)\Big)\big|_{t=0}\overset{\bm{(E.V.I.)}}{\le}\mh[\rhotne]-\mh[\rhotnne].
\end{equation}
In the last inequality we used that $S_\mh$ is a $0$-flow. Now, let us focus on the left hand side of \eqref{eq:flow-interchange}. First of all, note that
\begin{equation}\label{eq:integral-form-dis}
\begin{split}
D_{\mh}\mw_\varepsilon(\rhotnne)&=\limsup_{s\downarrow0}\left\{\frac{\mw_\varepsilon[\rhotnne]-\mw_\varepsilon[S_{\mh}^s\rhotnne]}{s}\right\}\\&=\limsup_{s\downarrow0}\int_0^1\left(-\frac{d}{dz}\Big|_{z=st}\mw_\varepsilon[S_{\mh}^{z}\rhotnne]\right)\,dt.
\end{split}
\end{equation}
Thus, we now compute the time derivative inside the above integral, by using integration by parts and keeping in mind the regularity of the solution to the heat equation:
\begin{equation}\label{eq:deriv-dis}
\begin{split}
\frac{d}{dt}\mw_\varepsilon[S_{\mh}^t\rhotnne]=&-\int_{\Rd}\nabla (W_\varepsilon*S_{\mh}^t\rhotnne)(x)\nabla S_{\mh}^t\rhotnne(x)\,dx\\
&=-\int_{\Rd}|\nabla( V_\varepsilon*S_{\mh}^t\rhotnne)(x)|^2\,dx.
\end{split}
\end{equation}
By substituting \eqref{eq:deriv-dis} into \eqref{eq:integral-form-dis}, from \eqref{eq:flow-interchange} we obtain
\[
\tau\liminf_{s\downarrow0}\int_0^1\int_{\Rd}|\nabla (V_\varepsilon*S_{\mh}^{st}\rhotnne)(x)|^2\,dx\,dt\le\mh[\rhotne]-\mh[\rhotnne],
\]
whence, by $L^2$ lower semi-continuity of the $H^1$ seminorm,
\[
\tau\int_{\Rd}|\nabla (V_\varepsilon*\rhotnne)(x)|^2\,dx\,dt\le\mh[\rhotne]-\mh[\rhotnne].
\]
By summing up over $n$ from $0$ to $N-1$, taking into account that $x\log x\le x^2$ for any $x\ge0$, Remark \ref{eq:controlbelowentropy} and that second order moments are uniformly bounded (see Proposition \ref{prop:en-ineq-mom-bound}), we get
\begin{align*}
\int_0^T\int_{\Rd}|\nabla( V_\varepsilon*\rhote(t))(x)|^2\,dx\,dt\le \mh[\rho_0]-\mh[\rhotne]\le \|\rho_0\|_{L^2(\Rd)}^2+C(\rho_0,V_1,T).
\end{align*}
In particular, using weak lower semi-continuity of the norm,
\begin{equation}\label{eq:bound-nablaveps}
\|\nabla v^\varepsilon\|_{L^2([0,T];L^2(\Rd))}^2=\int_0^T\int_{\Rd}|\nabla v^\varepsilon(t)(x)|^2\,dx\,dt\le \|\rho_0\|_{L^2(\Rd)}^2+C(\rho_0,V_1,T).
\end{equation}
The bounds in  \eqref{eq:l2-boundveps} and \eqref{eq:bound-nablaveps} give the first result of the statement, and an application of the Banach–Alaoglu Theorem concludes the proof.
\end{proof}
\begin{rem}
We observe that $\rho_0\in\mpdtard\cap L^2(\Rd)$ does imply $\mh[\rho_0]<\infty$, since $x\log x\le x^2$ for any $x\ge0$. We prefer to keep this assumption as this is the main issue when dealing with the particle approximation using this strategy.
\end{rem}
\begin{rem}\label{rem:moment-veps}
Let us notice that, for any $t\in[0,T]$, the first order moment of $v^\varepsilon_t$ is finite. In fact, using that $V_\varepsilon$ is even, the assumption $\int_\Rd|x|V_1(x)\,dx<+\infty$, and Proposition \ref{prop:en-ineq-mom-bound}:
\begin{align*}
\int_\Rd|x|v^\varepsilon_t(x)\,dx&=\iint_\Rdd |x|V_\varepsilon(x-y)\,d\rho^\varepsilon_t(y)\,dx\\
&\le \iint_\Rdd V_\varepsilon(y-x)|x-y|\,d\rho^\varepsilon_t(y)\,dx+ \iint_\Rdd V_\varepsilon(y-x)|y|\,d\rho^\varepsilon_t(y)\,dx\\
&=\varepsilon\int_\Rd V_1(z)|z|\,dz+\int_\Rd V_1(z)\,dz\int_\Rd|y|\,d\rho^\varepsilon_t(y)\\
&\le\varepsilon\int_\Rd V_1(z)|z|\,dz+\sqrt{m_2(\rhoe)}\int_\Rd V_1(z)\,dz<+\infty.
\end{align*}
\end{rem}
The strong $L^2$ compactness in time and space follows by applying a refined version of the Aubin-Lions Lemma due to Rossi and Savar\'{e} \cite[Theorem 2]{RS}. For the reader's convenience we recall the latter result below, before presenting the compactness result for $\{v^{\varepsilon_k}\}_k$.

\begin{prop}\cite[Theorem 2]{RS}\label{prop:aulirs-meas}
Let $X$ be a separable Banach space. Consider
\begin{itemize}
\item a lower semicontinuous functional $\mathcal{F}:X\to[0,+\infty]$ with relatively compact sublevels in $X$;
\item a pseudo-distance $g:X\times X\to[0,+\infty]$, i.e., $g$ is lower semicontinuous and such that $g(\rho,\eta)=0$ for any $\rho,\eta\in X$ with $\mathcal{F}(\rho)<\infty$, $\mathcal{F}(\eta)<\infty$ implies $\rho=\eta$.
\end{itemize}
Let $U$ be a set of measurable functions $u:(0,T)\to X$, with a fixed $T>0$. Assume further that
\begin{equation}\label{hprossav}
\sup_{u\in U}\int_{0}^T\mathcal{F}(u(t))\,dt<\infty\quad \text{and}\quad \lim_{h\downarrow0}\sup_{u\in U}\int_{0}^{T-h}g(u(t+h),u(t))\,dt=0\,.
\end{equation}
Then $U$ contains an infinite sequence $(u_n)_{n\in\mathbb{N}}$ that converges in measure, with respect to $t\in(0,T)$, to a measurable $\tilde{u}:(0,T)\to X$, i.e.
\[
\lim_{n\to\infty}|\{t\in(0,T):\|u_n(t)-u(t)\|_X\ge\sigma\}|=0, \quad \forall \sigma>0.
\]
\end{prop}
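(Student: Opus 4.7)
The plan is to establish this refined Aubin--Lions criterion by coupling the almost-everywhere relative compactness in $X$ supplied by the sublevels of $\mathcal{F}$ with the asymptotic time-regularity encoded in the pseudo-distance $g$. The argument proceeds in three stages: (i) produce pointwise relatively compact information on a large-measure subset of $(0,T)$; (ii) extract, by a diagonal selection over a countable dense set of times, a subsequence converging in $X$ at each such time; (iii) upgrade this pointwise convergence to convergence in measure on the whole interval by a contradiction exploiting $g$ and its separation property on sublevels of $\mathcal{F}$.

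First, I would apply Markov's inequality to the first bound in \eqref{hprossav}: for every $\eta>0$ there exist $R_\eta>0$ and, for each $u\in U$, a measurable set $A_\eta^u\subset(0,T)$ with $|(0,T)\setminus A_\eta^u|<\eta$ such that $\mathcal{F}(u(t))\le R_\eta$ for all $t\in A_\eta^u$. Since the sublevel $\{\mathcal{F}\le R_\eta\}$ is relatively compact in $X$, the family $\{u(t):u\in U\}$ lies in a compact set at each such $t$. Fixing a countable dense set $D\subset(0,T)$ and running a Cantor diagonal procedure, any given sequence $(u_n)\subset U$ admits a (not relabelled) subsequence such that $u_n(t)$ converges in $X$ at every $t\in D$ eventually contained in $A_\eta^{u_n}$, producing a candidate limit $\tilde u$ on a conull subset of $(0,T)$.

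Second, I would show the extracted subsequence is Cauchy in measure. Fix $\sigma>0$ and suppose, aiming at a contradiction, that $|\{t:\|u_n(t)-u_m(t)\|_X\ge\sigma\}|\ge\delta>0$ along a subsubsequence. Restricting to the set where $\mathcal{F}(u_n(t))$ and $\mathcal{F}(u_m(t))$ are simultaneously controlled by $R_\eta$ costs only $O(\eta)$ in measure, and on the remainder the values lie in a fixed compact subset of $X$. Averaging in time over windows of length $h$ and passing successively to the limits $n,m\to\infty$ and $h\downarrow 0$, the second condition in \eqref{hprossav} together with the lower semicontinuity of $g$ produces two accumulation points $\xi,\xi'$ with $\mathcal{F}(\xi),\mathcal{F}(\xi')<\infty$, $\|\xi-\xi'\|_X\ge\sigma$ and $g(\xi,\xi')=0$. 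The separation property of $g$ on sublevels of $\mathcal{F}$ then forces $\xi=\xi'$, which is the desired contradiction; the measurability of the resulting limit $\tilde u$ follows from pointwise a.e.\ convergence, and Cauchy-in-measure together with the metric structure of $X$ yields the stated convergence in measure.

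The main obstacle is precisely this third step: because $g$ is merely a pseudo-distance with no triangle inequality available, the time-regularity bound cannot be iterated along a partition of $(0,T)$ in the manner of classical Simon--Aubin--Lions arguments. The delicate manoeuvre is to keep the entire extraction inside a sublevel $\{\mathcal{F}\le R_\eta\}$, where the compactness of $X$-sublevels and the non-degeneracy of $g$ coexist, and to commute the time-averaging with the limits $n,m\to\infty$ by leveraging the lower semicontinuity of both $\mathcal{F}$ and $g$ without destroying the separation at the final step. Once this reconciliation of two rather different regularity mechanisms is carried out on each sublevel and the parameter $\eta$ is sent to zero along a diagonal, the abstract compactness claim follows.
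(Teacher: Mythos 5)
First, a contextual point: the paper does not prove this proposition at all — it is quoted verbatim from Rossi--Savar\'e \cite[Theorem 2]{RS} purely for the reader's convenience — so there is no internal proof to compare against, and I am judging your argument against the actual proof in that reference. Your outline assembles the right ingredients (Markov's inequality plus compact sublevels of $\mathcal F$ for tightness; the separation property of $g$ on $\{\mathcal F<\infty\}$; lower semicontinuity to pass to limits), but two of your three steps contain genuine gaps.

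The first gap is in the extraction step (ii). The good sets $A^u_\eta$ depend on $u$, so for a fixed time $t$ the family $\{u_n(t)\}_n$ need not lie in any compact subset of $X$: the point $t$ may belong to the exceptional set of infinitely many $u_n$, and a subsequence adapted to $t_1\in D$ can destroy the property at $t_2$. More importantly, even if a diagonal extraction over a countable dense set $D$ succeeded, pointwise convergence of merely measurable functions on a Lebesgue-null set of times gives no information whatsoever about convergence in measure — there is no continuity in $t$ available to propagate it. The second, decisive gap is in step (iii): the hypothesis $\int_0^{T-h}g(u(t+h),u(t))\,dt\to0$ is an equicontinuity statement for each \emph{single} trajectory, comparing $u_n$ with itself at shifted times; your sketch never explains how this yields a comparison of $u_n(t)$ with $u_m(t)$ for two \emph{different} indices at the \emph{same} time, which is what a Cauchy-in-measure argument requires. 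The claimed production of accumulation points $\xi,\xi'$ with $\|\xi-\xi'\|_X\ge\sigma$ and $g(\xi,\xi')=0$ is essentially the content of the theorem and is asserted rather than derived. In \cite{RS} both difficulties are resolved simultaneously by Young measures: tightness lets $(u_n)$ generate a Young measure $(\nu_t)_t$ concentrated on $\{\mathcal F<\infty\}$ for a.e.\ $t$; a double limit $n\to\infty$, $h\downarrow0$ applied to the pairs $(u_n(\cdot+h),u_n(\cdot))$ shows that $\int_0^T\iint g(x,y)\,d\nu_t(x)\,d\nu_t(y)\,dt=0$; the separation property of $g$ then forces each $\nu_t$ to be a Dirac mass $\delta_{\tilde u(t)}$, which is equivalent to convergence in measure. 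Without this device, or an equivalent replacement for it, your steps (ii)--(iii) do not close.
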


The two conditions in \eqref{hprossav} are called \textit{tightness} and \textit{weak integral equicontinuity}, respectively.
\begin{prop}\label{prop:strong-convergence-v}
Let $\varepsilon\le1$. The sequence $\{v^{\varepsilon_k}\}_k$ obtained in Lemma \ref{lem:h1-bound} converges strongly to the curve $v$ in $L^2([0,T];L^2(\Rd))$, for any $T>0$.
\end{prop}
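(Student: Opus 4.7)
The plan is to invoke the refined Aubin--Lions lemma of Rossi--Savar\'e (Proposition \ref{prop:aulirs-meas}) on the separable Banach space $X=L^2(\Rd)$, deduce convergence of $\{v^{\varepsilon_k}\}$ in measure with respect to $t\in(0,T)$, identify its limit with the $v$ of Lemma \ref{lem:h1-bound}, and finally upgrade to strong convergence in $L^2([0,T];L^2(\Rd))$. For the functional I would take
\begin{equation*}
\mathcal{F}(u)=\|u\|_{H^1(\Rd)}^{2}+\int_{\Rd}|x|\,u(x)\,dx
\end{equation*}
on probability densities $u\in L^2(\Rd)$ and $+\infty$ otherwise, and for the pseudo-distance
\begin{equation*}
g(u,w)=\begin{cases} d_{1}(u,w) & \text{if both $u,w$ are probability densities,}\\ +\infty & \text{otherwise.}\end{cases}
\end{equation*}
Relative compactness of the sublevels of $\mathcal{F}$ in $L^2(\Rd)$ would follow from Rellich--Kondrachov on balls (using the $H^1$ control), a Chebyshev-type argument on the first-moment term (producing $L^1$-smallness of tails), and interpolation against the Sobolev embedding $H^1\hookrightarrow L^p$ for some $p>2$ to promote the $L^1$-tail to an $L^2$-tail. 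Lower semicontinuity of $g$ and the pseudo-distance property $g=0\Rightarrow u=w$ are standard facts about $d_1$.

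Both conditions in \eqref{hprossav} can then be verified directly. Tightness, $\int_0^T\mathcal{F}(v^\varepsilon(t))\,dt\le C$, follows from the $H^1$ bound in Lemma \ref{lem:h1-bound} together with the first-moment bound in Remark \ref{rem:moment-veps}. For the weak integral equicontinuity I would use the convolution contraction
\begin{equation*}
d_1(V_\varepsilon*\mu,V_\varepsilon*\nu)\le d_W(\mu,\nu),
\end{equation*}
obtained by considering the coupling $(X+Z,Y+Z)$ with $Z\sim V_\varepsilon$ independent of an optimal plan $(X,Y)$ for $d_W(\mu,\nu)$, combined with the $\tfrac12$-H\"older-in-time estimate for $\rho^\varepsilon$ in $d_W$ from Proposition \ref{prop:limit-rho}. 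This gives $\int_0^{T-h}g(v^\varepsilon(t+h),v^\varepsilon(t))\,dt\le cT\sqrt{h}$, which vanishes as $h\downarrow 0$.

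Proposition \ref{prop:aulirs-meas} would then produce a subsequence (still denoted $v^{\varepsilon_k}$) converging in measure on $(0,T)$ to some measurable $\tilde v:(0,T)\to L^2(\Rd)$; a further extraction gives $v^{\varepsilon_k}(t)\to\tilde v(t)$ strongly in $L^2(\Rd)$ for a.e.~$t$. Uniqueness of limits against the weak $L^2([0,T];H^1)$ convergence of Lemma \ref{lem:h1-bound} forces $\tilde v=v$. To upgrade to strong $L^2([0,T];L^2(\Rd))$ convergence I would apply Lebesgue's dominated convergence theorem to $t\mapsto\|v^{\varepsilon_k}(t)-v(t)\|_{L^2}^2$, using the uniform pointwise-in-$t$ bound
\begin{equation*}
\|v^\varepsilon(t)\|_{L^2}^2=2\mw_\varepsilon[\rho^\varepsilon(t)]\le 2\mw_\varepsilon[\rho_0]\le\|V_1\|_{L^1}^2\|\rho_0\|_{L^2}^2,
\end{equation*}
which comes from the energy monotonicity along the JKO scheme (Proposition \ref{prop:en-ineq-mom-bound}) combined with Remark \ref{rem:uniform-bound-initial-energy}.

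The main technical obstacle I foresee is checking that the sublevels of $\mathcal{F}$ are relatively compact in $L^2(\Rd)$ rather than merely in $L^1(\Rd)$: the first-moment bound only yields $L^1$-smallness of tails, so to obtain $L^2$-smallness one needs the Sobolev embedding and therefore the full $H^1$-norm (not just the gradient) inside $\mathcal{F}$. The convolution contraction of $d_W$ and the lower semicontinuity of $d_1$ on the sublevels of $\mathcal{F}$ should on the other hand be routine.
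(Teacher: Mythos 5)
Your proposal is correct and follows essentially the same route as the paper: the same Rossi--Savar\'e functional $\mathcal{F}$ and pseudo-distance $g=d_1$, tightness from the $H^1$ and first-moment bounds, equicontinuity from the contraction of Wasserstein distances under convolution, and the final upgrade to strong $L^2_tL^2_x$ convergence by dominated convergence using the uniform-in-time energy bound. The only (immaterial) differences are that the paper establishes relative compactness of the sublevels via Riesz--Fr\'echet--Kolmogorov together with a H\"older/Gagliardo--Nirenberg tail estimate, whereas you use Rellich--Kondrachov plus Chebyshev and Sobolev interpolation, which amounts to the same mechanism.
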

\begin{proof}
The proof of the result is obtained by applying Proposition \ref{prop:aulirs-meas} to a subset of $U:=\{v^\varepsilon\}_{0\le\varepsilon\le1}$ for $X:=L^2(\Rd)$ and $g:=d_1$ being the $1$-Wasserstein distance --- extended to $+\infty$ outside of $\mP_1(\Rd)\times\mP_1(\Rd)$. As for the functional, we consider $\mathcal{F}:L^2(\Rd)\to[0,+\infty]$ defined by
\begin{equation*}
\mathcal{F}[v]=
\begin{cases}
||v||_{H^1(\Rd)}^2 + \int_{\R^d}|x|v(x)\, dx, & \text{if } v\in\mP_1(\Rd)\cap H^1(\Rd);\\
+\infty & \text{otherwise}.
\end{cases}
\end{equation*}
Note that elements in the domain of the functional $\mathcal{F}$ belong to $\mP_1(\Rd)$, thus $0=g(\rho,\eta)=d_1(\rho,\eta)$ implies $\rho=\eta$. Next we show that $\mathcal{F}$ is an admissible functional and later on we check the conditions in \eqref{hprossav}. In order to improve the readability we split the remainder of the proof in four steps.

\textit{\underline{Step 1: $\mathcal{F}$ is lower semicontinuous}} Let $\{v_n\}_n\subset L^2(\Rd)$ such that $v_n\to v$ in $L^2(\Rd)$ and $F[v_n]<+\infty$, otherwise it is trivial. We prove that $\mathcal{F}^1[v]:=||\nabla v||_{L^2(\Rd)}^2$ and $\mathcal{F}^2[v]:=\int_{\R^d}|x|v(x)\, dx$ are lower semicontinuous, since $\|v\|_{L^2}^2(\Rd)$ obviously is.
Note that $\|v_n\|_{H^1(\Rd)}^2\le\sup_n\|v_n\|_{H^1(\Rd)}^2=:\bar{\mathcal{F}}<+\infty$. Thus, there exists a subsequence such that $\nabla v_{n_k}\rightharpoonup \nabla v$ in $L^2(\Rd)$, since the limit is unique. A straightfoward computation shows that
\begin{align*}
    \mathcal{F}^1[v_n]\ge\int_{\Rd}|\nabla v(x)|^2\,dx+2\int_{\Rd}(\nabla v_n(x)-\nabla v)\cdot\nabla v(x)\,dx,
\end{align*}
which gives $\liminf_n\mathcal{F}^1[v_n]\ge\mathcal{F}^1[v]$. Regarding $\mathcal{F}^2$, let us consider $B_R$ a ball of radius $R$. Since $v_n\rightharpoonup v$ in $L^2(\Rd)$ and $|\cdot|\in L^2(B_R)$, we have
\[
\lim_n\int_{B_R}|x|v_n(x)\,dx=\int_{B_R}|x|v(x)\,dx,
\]
whence
\[
\liminf_n\int_{\Rd}|x|v_n(x)\,dx\ge\liminf_n\int_{B_R}|x|v_n(x)\,dx=\int_{B_R}|x|v(x)\,dx
\]
The monotone convergence theorem gives the desired result.

\textit{\underline{Step 2: sublevels of $\mathcal{F}$ are relatively compact in $L^2(\Rd)$}} Let $A_c:=\{v\in L^2(\Rd): \mathcal{F}[v]\le c\}$ be a sublevel of $\mathcal{F}$, where $c$ is a positive constant. The Riesz-Fr\'{e}chet-Kolmogorov theorem provides relatively compactness in $L^2(\Rd)$ of $A_c$. In fact, elements of $A_c$ are bounded in $L^2(\Rd)$ and it holds the uniform continuity estimate
\begin{equation}\label{eq:l2-cont-est}
\begin{split}
\int_{\Rd}|v(x+h)-v(x)|^2\,dx &= \int_{\Rd}\left|\int_0^1 \frac{d}{d\tau}v(x+\tau h)\,d\tau\right|^2 \,dx = \int_{\Rd}\left|\int_0^1 h \cdot \nabla v(x+\tau h)\,d\tau\right|^2 \,dx\\
&\le |h|^2\int_{\Rd}\int_0^1 |\nabla v(x+\tau h)|^2\,d\tau \,dx = |h|^2\|\nabla v\|_{L^2(\Rd)}^2,
\end{split}
\end{equation}
which implies $\|v(\cdot+h)-v(\cdot)\|_{L^2(\Rd)}\to0$ as $h\to0^+$. Moreover, we have uniform integrability at infinity by means of H\"older and Gagliardo-Nirenberg inequalities. In particular,
\begin{align*}
\|v\|_{L^2(\Rd\setminus B_R)}^2&=\int_{|x|\ge R}|v(x)|^2 \,dx\le \frac{1}{R^{\delta}}\int_{\Rd}|x|^{\delta}|v(x)|^2\,dx\\ 
&\le \frac{1}{R^{\delta}}\left(\int_{\R^d}|x|v(x)\,dx\right)^\delta \left(\int_{\Rd}|v(x)|^{\frac{2-\delta}{1-\delta}}\,dx\right)^{1-\delta},
\end{align*}
where $\delta$ can be chosen in $(0,1)$ such a way the exponent $p:=(2-\delta)/(1-\delta)$ satisfies $p\in\left(2,+\infty\right)$ for $d=1,2$, and $2<p<\frac{2d}{d-2}$ for $d>2$. The latter requirements are implied by the Gagliardo-Nirenberg inequality
$$
\|v\|_{L^p(\Rd)}\le C\|\nabla v\|_{L^2(\Rd)}^{\theta}\|v\|_{L^2(\Rd)}^{1-\theta},\qquad \theta=\frac{(p-2)d}{2 p},
$$
which guarantees that $\|v\|_{L^p(\Rd)}$ is finite, thus the uniform integrability at infinity.

\textit{\underline{Step 3: tightness and weak integral equicontinuity}} Let us set $U:=\{v^\varepsilon\}_{0\le\varepsilon\le1}$, being $v^\varepsilon:[0,T]\to L^2(\Rd)$ the sequence defined above by $v^\varepsilon=V_\varepsilon*\rhoe$, which satisfies Lemma \ref{lem:h1-bound}. For any $0\le\varepsilon\le1$, it holds
\begin{align*}
    \int_0^T\mathcal{F}[v^\varepsilon(t)]\,dt&=\int_0^T||v^\varepsilon(t)||_{H^1(\Rd)}^2\, dt  + \int_0^T\int_{\R^d}|x|v^\varepsilon_t(x)\, dx\,dt\\
    &\le C(\rho_0,V_1,T)+ T\int_\Rd V_1(z)|z|\,dz<+\infty,
\end{align*}
where we also used Remark \ref{rem:moment-veps} and that $\varepsilon\le1$ --- note that the bound for $\varepsilon$ is arbitrary as we could choose any constant. Taking the supremum in $U$ we have tightness. The weak integral equi-continuity is a consequence of the equi-continuity of $\rhoe$ proven in Proposition \ref{prop:limit-rho}. More precisely, for any $\varepsilon\ge0$ and $h>0$ it holds
\begin{align*}
    \int_0^{T-h}\!\!d_1(v^\varepsilon(t+h),v^\varepsilon(t))\,dt\le\!\!\int_0^{T-h}\!\!d_W(v^\varepsilon(t+h),v^\varepsilon(t))\,dt\le\!\!\int_0^{T-h}\!\!d_W(\rhoe(t+h),\rhoe(t))\,dt\le c|h|T,
\end{align*}
where in the intermediate inequalities we used well known properties of Wasserstein distances, cf. for example \cite[Section 5.1]{S}.

\textit{\underline{Step 4: relatively compactness in $L^2([0,T];L^2(\Rd))$}} By abuse of notation we denote by $U:=\{v^{\varepsilon_k}\}_k$ a subsequence of $\{v^\varepsilon\}_{0\le\varepsilon\le1}$ such that $v^{\varepsilon_k}\rightharpoonup v$ in $L^2([0,T];H^1(\Rd))$, in view of Lemma \ref{lem:h1-bound}. According to Proposition \ref{prop:aulirs-meas}, there exists a subsubsequence $v^{\varepsilon_k'}$ such that $v^{\varepsilon_k'}$ converges in measure (with respect to time with values in $X=L^2(\Rd)$) to a curve $\tilde{v}\equiv v$, due to the weakly convergence of $v^{\varepsilon_k}$. By standard arguments we can conclude that $\{v^{\varepsilon_k}\}$ converges to $v$ in measure, thus pointwise almost everywhere (up to pass to a subsequence). Since $\sup_t\|v^\varepsilon(t)\|_{L^2(\Rd))}^2\le \|V_1\|_{L^1}^2\|\rho_0\|_{L^2}^2$, we infer strong convergence of $v^{\varepsilon_k}$ to $v$ in $L^2([0,T];L^2(\Rd))$ by applying Lebesgue's dominated convergence theorem.
\end{proof}

\section{Towards the quadratic porous medium equation}\label{sec:local-limit-pme2}

In view of the analysis carried out in the previous sections, we are now able to prove convergence of solutions of \eqref{eq:nlie} to the solution of \eqref{eq:pme2}, as $\varepsilon \rightarrow 0^+$. The key issue is to pass to the limit in the weak formulation, which is not straightforward since $\rho^\varepsilon$ is only a measure in general. As already explained earlier in the paper, we use the higher regularity of $v^\varepsilon$ and that $\rho^\varepsilon - v^\varepsilon$ converges to zero in the sense of distributions, starting from $\rho_0\in\mpdtard\cap L^2(\Rd)$ with $\mh[\rho_0]<\infty$.

According to Definition \ref{def:weak-meas-sol}, for any $\varepsilon>0$ and any $\varphi\in C^1_c(\Rd)$, $\rhoe$ satisfies 
\begin{equation}\label{eq:weak-form-conv}
\begin{split}
   \int_\Rd\varphi(x)d\rho_T^\varepsilon(x)\!-\!\int_\Rd\varphi(x)d\rho_0(x)&\!=\!-\frac{1}{2}\int_0^T\!\!\!\iint_\Rdd(\nabla\varphi(x)\!-\!\nabla\varphi(y))\!\cdot\!\nabla W_\varepsilon(x\!-\!y)d\rho_t^\varepsilon(y)d\rho_t^\varepsilon(x)dt,\\
    &=\!-\!\int_0^T\!\!\! \int_{\Rd} \nabla \varphi(x) \cdot \nabla V_\varepsilon*v^\varepsilon_t(x) d\rhoe_t(x)dt,
\end{split}
\end{equation}
which can be rewritten as
\begin{equation}\label{eq:weak-form-z}
\begin{split}
 \int_\Rd\varphi(x)\,d\rho_T^\varepsilon(x)-\int_\Rd\varphi(x)\,d\rho_0(x)&=-\int_0^T \int_{\Rd} V_\varepsilon* ( \rhoe_t \nabla \varphi)(x) \cdot \nabla v^\varepsilon_t(x) \,dx\,dt \\  &=-\int_0^T \int_{\Rd} v^\varepsilon_t(x) \nabla \varphi(x) \cdot \nabla v^\varepsilon_t(x) \,dx\,dt\\
&\quad -\int_0^T \int_{\Rd} z^\varepsilon_t(x) \cdot \nabla v^\varepsilon_t(x) \,dx\,dt,
\end{split}
\end{equation} 
where for any $t\in[0,T]$ and $x\in\Rd$ the excess term is given by
\[
z^\varepsilon_t(x) := V_\varepsilon* ( \rhoe_t \nabla \varphi)(x) - 
(V_\varepsilon*\rhoe_t)(x) \nabla \varphi(x) =  V_\varepsilon* ( \rhoe_t \nabla \varphi)(x) -v^\varepsilon_t(x) \nabla \varphi(x).
\]
\begin{rem}
Note that the integral after the second equality in \eqref{eq:weak-form-conv} makes sense since $\nabla V_\varepsilon*v_t^\varepsilon\in C(\Rd)$. This can be easily verified by applying Lebesgue dominated convergence theorem using that $\nabla V_\varepsilon\in L^1(\Rd)$, for $\varepsilon>0$.
\end{rem}
Lemma \ref{lem:h1-bound} and Proposition \ref{prop:strong-convergence-v} entail to pass to the limit in the first term on the right-hand side of \eqref{eq:weak-form-z}, upon considering a subsequence, since $v^\varepsilon$ converges strongly in $L^2([0,T];L^2(\Rd))$ and $\nabla v^\varepsilon$ converges weakly in $L^2([0,T];L^2(\Rd))$. We will show that $v^\varepsilon$ converges to the same limit of $\rho^\varepsilon$ in the sense of distributions, whence we infer that the limit $\tilde{\rho}$ from Proposition \ref{prop:limit-rho} attains the same regularity, namely $L^2([0,T];H^1(\Rd))$. Furthermore, we prove that the excess term $z^\varepsilon$ converges to $0$ in $L^2([0,T]\times\Rd)$.

\subsection{Convergence of the excess term}\label{sec:excess-term}

\begin{lem}
The excess term $z^\varepsilon$ satisfies
\begin{align*}
    \Vert z^\varepsilon \Vert_{L^\infty([0,T];L^1(\Rd))} \leq \varepsilon C(V_1,\varphi),
\end{align*}
for any $\varphi\in C^2_c(\Rd)$.
\end{lem}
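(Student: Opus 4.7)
The plan is to exploit the near-cancellation in the definition of $z^\varepsilon$: when $\nabla\varphi$ is constant, we have $V_\varepsilon*(\rho^\varepsilon_t\nabla\varphi) = (V_\varepsilon*\rho^\varepsilon_t)\nabla\varphi$ exactly. The residual comes only from the variation of $\nabla\varphi$ over the $\varepsilon$-scale support of $V_\varepsilon$, which produces one factor of $\varepsilon$ via the first moment of $V_\varepsilon$.

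Concretely, the first step is to rewrite, using the fact that $V_\varepsilon$ is a (symmetric) probability kernel,
\begin{equation*}
z^\varepsilon_t(x) = \int_\Rd V_\varepsilon(x-y)\bigl[\nabla\varphi(y) - \nabla\varphi(x)\bigr]\,d\rho^\varepsilon_t(y),
\end{equation*}
which makes the cancellation mentioned above explicit. Since $\varphi\in C^2_c(\Rd)$, $\nabla\varphi$ is globally Lipschitz with constant bounded by $\|D^2\varphi\|_{L^\infty}$, hence
\begin{equation*}
|z^\varepsilon_t(x)| \le \|D^2\varphi\|_{L^\infty}\int_\Rd V_\varepsilon(x-y)|x-y|\,d\rho^\varepsilon_t(y).
\end{equation*}

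The second step is to integrate in $x$, apply Fubini's theorem (the integrand is non-negative), and perform the change of variables $z = (x-y)/\varepsilon$ in the inner integral. This yields, for every $t\in[0,T]$,
\begin{equation*}
\int_\Rd |z^\varepsilon_t(x)|\,dx
\le \|D^2\varphi\|_{L^\infty}\int_\Rd V_\varepsilon(w)|w|\,dw
= \varepsilon\,\|D^2\varphi\|_{L^\infty}\int_\Rd V_1(z)|z|\,dz,
\end{equation*}
where the first equality uses the mass normalisation $\rho^\varepsilon_t(\Rd)=1$ and the translation invariance of Lebesgue measure, and the second uses the scaling $V_\varepsilon(w) = \varepsilon^{-d}V_1(w/\varepsilon)$. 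The finiteness of $\int_\Rd V_1(z)|z|\,dz$ is exactly part of assumption \ref{ass:v1}, so setting $C(V_1,\varphi) := \|D^2\varphi\|_{L^\infty}\int_\Rd V_1(z)|z|\,dz$ and taking the supremum over $t\in[0,T]$ finishes the proof.

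There is no real obstacle here: the estimate is essentially a first-order Taylor bound together with the $\varepsilon$-rescaling of the first moment of $V_1$. The only two assumptions used are the $C^2$ regularity of $\varphi$ (needed to Lipschitz-control $\nabla\varphi$, which is why the statement upgrades from $C^1_c$ to $C^2_c$ compared with Definition \ref{def:weak-meas-sol}) and the finite first moment of $V_1$ from \ref{ass:v1}.
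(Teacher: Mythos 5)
Your proof is correct and follows essentially the same route as the paper's: rewrite $z^\varepsilon_t$ as a single convolution integral against the increment $\nabla\varphi(y)-\nabla\varphi(x)$, bound that increment by $\|D^2\varphi\|_{L^\infty}|x-y|$, and conclude via Fubini and the rescaling of the first moment of $V_1$. No gaps.
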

\begin{proof}
For any $t\in[0,T]$ and $\varphi\in C^2_c(\Rd)$ we obtain
\begin{align*}
    \int_\Rd |z^\varepsilon_t(x)|\,dx &\le \int_{\Rd}\int_{\Rd} V_\varepsilon(x-y) |\nabla \varphi(y) - \nabla \varphi(x)| d\rhoe_t(y)\,dx\\
    &\le\|D^2\varphi\|_\infty\int_{\R^d}\int_{\Rd}V_\varepsilon(x-y)|y-x|\,d\rhoe_t(y)\,dx\\
    &=\varepsilon\|D^2\varphi\|_\infty\int_{\Rd}|z|V_1(z)\,dz,
\end{align*}
by means of the change of variable $z=\frac{x-y}{\varepsilon}$. The assertion follows by taking the supremum over $t\in[0,T]$.
\end{proof}
\begin{lem}
There exists a constant $C$ only depending on $\varphi$ and $V_1$ such that for all $\varepsilon > 0$ and a $\delta>0$ small enough
$$\|z^\varepsilon\|_{L^2([0,T];L^{2+\delta}(\Rd))}\le C.$$
\end{lem}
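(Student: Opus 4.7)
The plan is to bound $z^\varepsilon_t(x)$ pointwise by the smoothed density $v^\varepsilon_t(x)$ (up to a constant depending only on $\varphi$), and then upgrade the $H^1$ estimate on $v^\varepsilon$ from Lemma~\ref{lem:h1-bound} to an $L^{2+\delta}$ estimate via Sobolev embedding. This bypasses the need to estimate $V_\varepsilon$ in any $L^p$ norm for $p>1$, which would blow up as $\varepsilon\to0^+$.

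First I would exploit the nonnegativity of $V_\varepsilon$ and $\rho^\varepsilon_t$. Starting from
\[
z^\varepsilon_t(x) = V_\varepsilon*(\rho^\varepsilon_t\nabla\varphi)(x) - v^\varepsilon_t(x)\nabla\varphi(x),
\]
the triangle inequality and the bound $|\nabla\varphi(y)|\le\|\nabla\varphi\|_\infty$ yield
\[
|z^\varepsilon_t(x)|\le \int_\Rd V_\varepsilon(x-y)|\nabla\varphi(y)|\,d\rho^\varepsilon_t(y)+\|\nabla\varphi\|_\infty v^\varepsilon_t(x)\le 2\|\nabla\varphi\|_\infty\, v^\varepsilon_t(x).
\]
Taking the $L^{2+\delta}$ norm in space gives $\|z^\varepsilon_t\|_{L^{2+\delta}(\Rd)}\le 2\|\nabla\varphi\|_\infty\|v^\varepsilon_t\|_{L^{2+\delta}(\Rd)}$ for any $\delta\ge 0$.

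Next I would fix $\delta>0$ small enough that the Sobolev embedding $H^1(\Rd)\hookrightarrow L^{2+\delta}(\Rd)$ holds: in dimensions $d=1,2$ any $\delta>0$ works, while for $d\ge 3$ it suffices to take $\delta\le 4/(d-2)$, so that $2+\delta$ does not exceed the critical Sobolev exponent $2^{*}=2d/(d-2)$. This gives $\|v^\varepsilon_t\|_{L^{2+\delta}}\le C_S\|v^\varepsilon_t\|_{H^1}$ uniformly in $t$ and $\varepsilon$. Squaring and integrating in time,
\[
\|z^\varepsilon\|_{L^2([0,T];L^{2+\delta}(\Rd))}^2\le 4\|\nabla\varphi\|_\infty^2 C_S^2\int_0^T\|v^\varepsilon_t\|_{H^1(\Rd)}^2\,dt,
\]
and the last integral is uniformly bounded by Lemma~\ref{lem:h1-bound}, producing the claimed constant $C=C(\varphi,V_1,T,\rho_0)$.

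The argument is essentially routine once the pointwise domination is in place; there is no genuine obstacle. The only thing to be careful about is the dimension-dependent range of $\delta$. I would also note that this estimate is complementary to the previous $L^\infty([0,T];L^1(\Rd))$ bound of order $\varepsilon$: interpolating the two gives $\|z^\varepsilon\|_{L^q([0,T];L^p(\Rd))}\to0$ for suitable $q$ and $p\in(1,2+\delta)$, which is presumably what will be exploited in the passage to the limit in \eqref{eq:weak-form-z}.
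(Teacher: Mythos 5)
Your proof is correct and follows essentially the same route as the paper: the pointwise domination $|z^\varepsilon_t|\le 2\|\nabla\varphi\|_\infty v^\varepsilon_t$ via nonnegativity of $V_\varepsilon$ and $\rho^\varepsilon_t$, followed by the Sobolev embedding $H^1(\Rd)\hookrightarrow L^{2+\delta}(\Rd)$ and the uniform $L^2_tH^1_x$ bound on $v^\varepsilon$ from Lemma \ref{lem:h1-bound}. Your explicit dimension-dependent range for $\delta$ is a useful detail the paper leaves implicit.
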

\begin{proof}
For almost every $x\in\Rd$ and $t\in[0,T]$, for $i=1,\ldots,d$, the non-negativity of $V_1$ and $\rhoe_t$ gives
$$   \left\vert \int_{\Rd} V_\varepsilon(x-y) \partial_{x_i} \varphi(y) d\rhoe_t(y) \right\vert 
\leq \int_{\Rd} V_\varepsilon(x-y) \vert \partial_{x_i} \varphi(y)  \vert d\rhoe_t(y) \leq 
\Vert \partial_{x_i} \varphi \Vert_{\infty} ~v^\varepsilon_t(x).$$ 
In particular, this implies
$$
|z_t^\varepsilon(x)|\le2\|\nabla\varphi\|_\infty|v_t^\varepsilon(x)|,
$$
thus $\Vert z^\varepsilon_t   \Vert_{L^p(\Rd)} \leq 2 \Vert \nabla \varphi\Vert_{\infty} \Vert v^\varepsilon_t \Vert_{L^p}$
for almost every $t\in[0,T]$, and $p\ge1$.

Proposition \ref{prop:en-ineq-mom-bound} and Lemma \ref{lem:h1-bound} entail existence of a constant $c$, independent of $\varepsilon$, such that
\begin{align*} 
\sup_{t \in [0,T]} \Vert v^\varepsilon_t \Vert_{L^2(\Rd)}\leq c \quad \mbox{and} \quad \int_0^T \Vert \nabla v_t^\varepsilon\Vert_{L^2}^2~dt  \leq c.
\end{align*}
Sobolev embedding theorems provide the estimate since
\[
\int_0^T\|v_t^\varepsilon\|_{L^{2+\delta}(\Rd)}^2dt\le c,
\]
for some constant still denoted by c.
\end{proof}

\begin{cor}\label{cor:convergence-z-0} 
The excess term converges to zero in $L^2([0,T]\times\Rd)$ as $\varepsilon\to0^+$. 
\end{cor}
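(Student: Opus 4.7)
The plan is to interpolate the two estimates on $z^\varepsilon$ already established: the $L^\infty_t L^1_x$ bound of order $\varepsilon$, and the $L^2_t L^{2+\delta}_x$ bound that is uniform in $\varepsilon$. Since $L^2$ sits between $L^1$ and $L^{2+\delta}$ for any $\delta > 0$, the standard log-convexity (Lyapunov) inequality for $L^p$ norms gives, for a.e.\ $t \in [0,T]$,
\begin{equation*}
\|z^\varepsilon_t\|_{L^2(\Rd)} \le \|z^\varepsilon_t\|_{L^1(\Rd)}^{\theta} \, \|z^\varepsilon_t\|_{L^{2+\delta}(\Rd)}^{1-\theta}, \qquad \theta = \frac{\delta}{2(1+\delta)} \in (0,1),
\end{equation*}
where $\theta$ is chosen so that $\tfrac{1}{2} = \theta + \tfrac{1-\theta}{2+\delta}$.

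Next, I would square and integrate in time, then apply Hölder's inequality in $t$ with exponents $1/\theta$ and $1/(1-\theta)$ to separate the two factors:
\begin{equation*}
\int_0^T \|z^\varepsilon_t\|_{L^2(\Rd)}^{2} \, dt \le \left(\sup_{t \in [0,T]} \|z^\varepsilon_t\|_{L^1(\Rd)}\right)^{2\theta} \cdot T^{\theta} \cdot \left(\int_0^T \|z^\varepsilon_t\|_{L^{2+\delta}(\Rd)}^{2} \, dt\right)^{1-\theta}.
\end{equation*}
Plugging in the two previous lemmas, the right-hand side is bounded by $(\varepsilon C(V_1,\varphi))^{2\theta} \, T^{\theta} \, C^{2(1-\theta)}$, which tends to $0$ as $\varepsilon \to 0^+$ because $\theta > 0$. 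This yields $\|z^\varepsilon\|_{L^2([0,T] \times \Rd)} \to 0$, as required.

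There is essentially no obstacle here: both ingredients are already in hand, and the only point that needs a little care is choosing $\delta > 0$ small enough so that the $L^{2+\delta}$ bound is valid (guaranteed by the previous lemma) while still yielding $\theta > 0$ for the interpolation. The corollary then follows immediately, and in fact provides a quantitative rate of convergence of order $\varepsilon^{2\theta}$ for the $L^2$ norm of the excess term.
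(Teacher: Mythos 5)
Your proposal is correct and follows essentially the same route as the paper: the log-convexity interpolation $\|z^\varepsilon_t\|_{L^2}\le\|z^\varepsilon_t\|_{L^1}^{\theta}\|z^\varepsilon_t\|_{L^{2+\delta}}^{1-\theta}$ with $\theta=\delta/(2(1+\delta))$, followed by H\"older in time to separate the two factors and conclude with the rate $\varepsilon^{2\theta}$. The only cosmetic difference is that you pull out the $L^\infty_t L^1_x$ factor as a supremum, whereas the paper keeps it inside the time integral; both are valid.
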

\begin{proof}
The proof is a simple consequence of the interpolation inequality for $L^p$ functions and the previous Lemmas. More precisely, for $\alpha=\delta/(2(1+\delta))$ it holds
\begin{equation}
\begin{split}
\int_0^T\|z_t^\varepsilon\|_{L^2(\Rd)}^2dt&\le\int_0^T\|z_t^\varepsilon\|_{L^1(\Rd)}^{2\alpha}\|z_t^\varepsilon\|_{L^{2+\delta}(\Rd)}^{2(1-\alpha)}dt\\
&\le\left(\int_0^T\|z_t^\varepsilon\|_{L^1(\Rd)}^2dt\right)^\alpha\left(\int_0^T\|z_t^\varepsilon\|_{L^{2+\delta}(\Rd)}^2dt\right)^{1-\alpha}\le\varepsilon^{2\alpha} T^{\alpha}C(V_1,\varphi),
\end{split}
\end{equation}
which gives the result by letting $\varepsilon$ tend to $0$.
\end{proof}
\subsection{Convergence to the quadratic porous medium equation}
Let us consider the subsequence from Proposition \ref{prop:limit-rho}, still denoted by $\{\rhoe\}_\varepsilon$, which narrowly converges to the curve $\tilde{\rho}$. In the next Lemma we show that the corresponding smoothed subsequence, still denoted by $\{v^\varepsilon\}_{\varepsilon}$, converges to $\tilde{\rho}$ in the sense of distributions.
\begin{lem}\label{lem:limit-dist}
For any $t\in[0,T]$ and any $\varphi\in C_c^1(\Rd)$ we have
$$
\lim_{\varepsilon\to0^+}\int_\Rd \varphi(x)v_t^\varepsilon(x)\,dx=\int_{\Rd}\varphi(x)\,d\tilde{\rho}(t).
$$
\end{lem}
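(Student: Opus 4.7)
The plan is to exploit the symmetry of $V_\varepsilon$ to transfer the convolution onto the test function, and then combine the standard mollifier convergence $V_\varepsilon * \varphi \to \varphi$ with the narrow convergence of $\rho^\varepsilon(t)$ to $\tilde\rho(t)$ furnished by Proposition \ref{prop:limit-rho}.

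The first step is to use Fubini and the fact that $V_\varepsilon$ is even to rewrite
\begin{equation*}
\int_\Rd \varphi(x) v_t^\varepsilon(x)\,dx = \int_\Rd \varphi(x)(V_\varepsilon*\rho_t^\varepsilon)(x)\,dx = \int_\Rd (V_\varepsilon*\varphi)(x)\,d\rho_t^\varepsilon(x),
\end{equation*}
so that the problem is reduced to understanding the limit of $\int_\Rd (V_\varepsilon*\varphi)\,d\rho_t^\varepsilon$. The natural decomposition is
\begin{equation*}
\int_\Rd (V_\varepsilon*\varphi)\,d\rho_t^\varepsilon - \int_\Rd \varphi\,d\tilde\rho(t) = \int_\Rd \bigl(V_\varepsilon*\varphi - \varphi\bigr)\,d\rho_t^\varepsilon + \left(\int_\Rd \varphi\,d\rho_t^\varepsilon - \int_\Rd \varphi\,d\tilde\rho(t)\right),
\end{equation*}
and I will show that each summand tends to zero as $\varepsilon\to 0^+$.

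For the first summand, since $\varphi\in C_c^1(\Rd)$ is uniformly continuous and $V_\varepsilon$ is a standard mollifier (nonnegative, even, with unit $L^1$ mass by \ref{ass:v1} and scaling), a routine argument gives $\|V_\varepsilon*\varphi - \varphi\|_{L^\infty(\Rd)} \to 0$; indeed, $|V_\varepsilon*\varphi(x)-\varphi(x)|\le \int_\Rd V_1(z)|\varphi(x-\varepsilon z)-\varphi(x)|\,dz$, and uniform continuity of $\varphi$ together with dominated convergence (bound $V_1(z)\cdot 2\|\varphi\|_\infty$) finishes this. Since $\rho_t^\varepsilon$ is a probability measure, this term is bounded by $\|V_\varepsilon*\varphi - \varphi\|_\infty\to 0$. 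The second summand vanishes because Proposition \ref{prop:limit-rho} yields narrow convergence $\rho_t^\varepsilon \rightharpoonup \tilde\rho(t)$ for every $t\in[0,T]$, and $\varphi\in C_b(\Rd)$.

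The main (mild) obstacle is simply to confirm the uniform mollifier convergence under assumption \ref{ass:v1}, where $V_1$ need not be compactly supported; the tail bound via $V_1\in L^1$ and dominated convergence handles this cleanly. No Wasserstein machinery or additional compactness is needed beyond what is already available.
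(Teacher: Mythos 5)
Your proof is correct and follows essentially the same route as the paper: move the convolution onto $\varphi$ by symmetry of $V_\varepsilon$, show $\int(V_\varepsilon*\varphi-\varphi)\,d\rho_t^\varepsilon\to0$, and conclude with the narrow convergence from Proposition \ref{prop:limit-rho}. The only difference is that the paper exploits the Lipschitz bound $|\varphi(x-y)-\varphi(x)|\le C|y|$ together with $\int|x|V_1(x)\,dx<\infty$ to get a quantitative $O(\varepsilon)$ rate, whereas you use uniform continuity and dominated convergence, which yields the same qualitative conclusion.
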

\begin{proof}
For any $t\in[0,T]$ and any $\varphi\in C_c^1(\Rd)$, by using the Definition of $v_t^\varepsilon$ we obtain:
\begin{align*}
\left|\int_\Rd\varphi(x)v_t^\varepsilon(x)\,dx-\int_\Rd\varphi(x)\,d\rhoe_t(x)\right|&=\left|\int_\Rd\varphi(x)(V_\varepsilon*\rhoe_t)(x)\,dx-\int_\Rd\varphi(x)\,d\rhoe_t(x)\right|\\
&=\left|\int_\Rd(\varphi*V_\varepsilon)(x)\,d\rhoe_t(x)-\int_\Rd\varphi(x)\,d\rhoe_t(x)\right|\\
&=\left|\int_\Rd[(\varphi*V_\varepsilon)(x)-\varphi(x)]\,d\rhoe_t(x)\right|\\
&\le\int_\Rd\int_\Rd|\varphi(x-y)-\varphi(x)|V_\varepsilon(y)\,dy\,d\rhoe_t(x)\\
&\le\|\varphi\|_\infty\int_\Rd |y|V_\varepsilon(y)\,dy\\
&=\varepsilon\|\varphi\|_\infty\int_\Rd |x|V_1(x)\,dx,
\end{align*}
which converges to $0$ as $\varepsilon\to0^+$ since $\int_\Rd|x|V_1(x)\,dx<+\infty$.
\end{proof}
We have now all the information to prove our first main result.
\begin{thm}\label{thm:existence-pme2}
Let  $\rho_0\in\mpdtard\cap L^2(\Rd)$ such that $\mh[\rho_0]<\infty$. The sequence $\{\rhoe\}_{0<\varepsilon\le1}$ of solutions to \eqref{eq:nlie} narrowly converges to the unique weak solution $\tilde{\rho}$ of \eqref{eq:pme2}. as $\varepsilon \rightarrow 0$ 
\end{thm}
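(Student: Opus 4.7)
The plan is to pass to the limit $\varepsilon\to 0^+$ in the weak formulation \eqref{eq:weak-form-z} applied to the subsequence constructed in the previous sections. Combined with uniqueness of weak solutions of \eqref{eq:pme2}, this yields narrow convergence of the whole sequence.

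First I would identify the weak limit $v$ of $\{v^{\varepsilon_k}\}$ with $\tilde\rho$. Lemma \ref{lem:limit-dist} shows that $\int_\Rd \varphi\, v_t^{\varepsilon_k}(x)\,dx \to \int_\Rd \varphi\,d\tilde\rho(t)$ for every $t\in[0,T]$ and $\varphi\in C_c^1(\Rd)$, while Proposition \ref{prop:strong-convergence-v} yields the same integral converging to $\int_\Rd \varphi\,v_t(x)\,dx$ for a.e.\ $t$. By uniqueness of distributional limits, $\tilde\rho(t)=v(t)$ for a.e.\ $t\in[0,T]$, so in particular $\tilde\rho\in L^2([0,T];H^1(\Rd))$ as required by item (1) of Definition \ref{def:sol-pme2}.

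Next I would pass to the limit term by term in \eqref{eq:weak-form-z} at an arbitrary time $t\in[0,T]$. The left-hand side converges to $\int_\Rd \varphi\,d\tilde\rho(t)-\int_\Rd\varphi\,d\rho_0$ thanks to the uniform-in-$t$ narrow convergence from Proposition \ref{prop:limit-rho}. For the main nonlinear term, strong convergence $v^{\varepsilon_k}\to \tilde\rho$ in $L^2([0,T];L^2(\Rd))$ combined with $\nabla\varphi\in L^\infty$ yields strong convergence $v^{\varepsilon_k}\nabla\varphi \to \tilde\rho\,\nabla\varphi$ in $L^2([0,t]\times\Rd)$, while $\nabla v^{\varepsilon_k}\rightharpoonup \nabla\tilde\rho$ weakly in $L^2([0,t]\times\Rd)$; the strong-times-weak pairing then gives
$$\int_0^t\!\!\int_\Rd v_s^{\varepsilon_k}(x)\nabla\varphi(x)\cdot\nabla v_s^{\varepsilon_k}(x)\,dx\,ds \;\longrightarrow\; \int_0^t\!\!\int_\Rd \tilde\rho(s,x)\nabla\varphi(x)\cdot\nabla\tilde\rho(s,x)\,dx\,ds.$$
The excess contribution is controlled by Cauchy--Schwarz, $\left|\int_0^t\int z_s^\varepsilon\cdot\nabla v_s^\varepsilon\right|\le \|z^\varepsilon\|_{L^2([0,T]\times\Rd)}\|\nabla v^\varepsilon\|_{L^2([0,T]\times\Rd)}$, which vanishes as $\varepsilon\to 0^+$ thanks to Corollary \ref{cor:convergence-z-0} and the uniform $L^2$ bound on $\nabla v^\varepsilon$ from Lemma \ref{lem:h1-bound}. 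Combining these three convergences produces the weak formulation of Definition \ref{def:sol-pme2}, so $\tilde\rho$ is a weak solution of \eqref{eq:pme2} on $[0,T]$ with initial value $\rho_0$. Uniqueness of such weak solutions (see, e.g., \cite{Vaz07}) then promotes subsequential convergence to convergence of the whole family $\{\rho^\varepsilon\}_{0<\varepsilon\le 1}$.

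The main obstacle is the passage to the limit in the quadratic term $v^\varepsilon\nabla v^\varepsilon$, since weak--weak products do not pass to the limit in general. The strong $L^2$ compactness of $v^\varepsilon$ obtained in Proposition \ref{prop:strong-convergence-v} via the refined Aubin--Lions criterion of Rossi--Savar\'e is precisely what makes this work, and identifying the strong limit with $\tilde\rho$ through Lemma \ref{lem:limit-dist} is the other crucial ingredient; the role of the excess term $z^\varepsilon$ and its quantitative vanishing rate from section \ref{sec:excess-term} ensures that the product structure $V_\varepsilon*(\rhoe\nabla\varphi)\cdot\nabla v^\varepsilon$ in the original weak formulation reduces, in the limit, to $\tilde\rho\nabla\varphi\cdot\nabla\tilde\rho$ with no spurious remainder.
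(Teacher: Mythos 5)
Your proposal follows essentially the same route as the paper: rewrite the weak formulation as in \eqref{eq:weak-form-z}, identify the strong $L^2$ limit of $v^{\varepsilon}$ with $\tilde\rho$ via Lemma \ref{lem:limit-dist}, pass to the limit in the quadratic term by the strong--weak pairing, kill the excess term using Corollary \ref{cor:convergence-z-0} together with the uniform bound on $\nabla v^\varepsilon$, and conclude by uniqueness of weak solutions of \eqref{eq:pme2}.

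There is one concrete step you skip. The estimates of section \ref{sec:excess-term}, and hence Corollary \ref{cor:convergence-z-0}, are proved only for test functions $\varphi\in C^2_c(\Rd)$ (the first excess-term lemma uses $\|D^2\varphi\|_\infty$), whereas Definition \ref{def:sol-pme2} requires the limiting weak formulation for every $\varphi\in C^1_c(\Rd)$. As written, your application of Corollary \ref{cor:convergence-z-0} to an arbitrary $C^1_c$ test function is not justified. The fix is routine and is what the paper does: replace $\varphi$ by a mollification $\varphi^\sigma=\eta^\sigma*\varphi\in C^\infty_c(\Rd)$, pass to the limit $\varepsilon\to0^+$ for each fixed $\sigma>0$, and then let $\sigma\to0$ using uniform convergence of $\varphi^\sigma$ and $\nabla\varphi^\sigma$ together with the $L^2([0,T];H^1(\Rd))$ regularity of $\tilde\rho$. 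With that addition your argument matches the paper's proof.
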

\begin{proof}
Since $\rhoe$ is a weak solution to \eqref{eq:nlie}, for any $\varphi\in C^1_c(\Rd)$ and $t\in[0,T]$ it satisfies
\begin{align*}
    \int_\Rd\varphi(x)\,d\rho_t^\varepsilon(x)-\int_\Rd\varphi(x)\,d\rho_0(x)&=-\int_0^t \int_{\Rd} v^\varepsilon_t(x) \nabla \varphi(x) \cdot \nabla v^\varepsilon_t(x) \,dx\,dt\\
&\quad -\int_0^t \int_{\Rd} z^\varepsilon_t(x) \cdot \nabla v^\varepsilon_t(x) \,dx\,dt,
\end{align*}
as explained in \eqref{eq:weak-form-conv} and \eqref{eq:weak-form-z}. In view of Proposition \ref{prop:limit-rho}, Lemmas \ref{lem:h1-bound}, \ref{lem:limit-dist}, and Proposition \ref{prop:strong-convergence-v}, we know there exists a subsequence of $\rhoe(t)$ narrowly converging to $\tilde{\rho}\in L^2([0,T];H^1(\Rd))$, and, in particular, $\{v^\varepsilon\}_\varepsilon$ admits a subsequence such that
\begin{align*}
    &v^{\varepsilon_k}\to\tilde{\rho} \qquad\quad \mbox{ in } L^2([0,T];L^2(\Rd));\\
    \nabla& v^{\varepsilon_k}\rightharpoonup \nabla\tilde{\rho} \qquad \, \mbox{ in } L^2([0,T];L^2(\Rd)).
\end{align*}
Before letting $\varepsilon\to0^+$ and obtaining the result we need to further regularise the test function, $\varphi$, since Corollary \ref{cor:convergence-z-0} holds for test functions in $C^2_c(\Rd)$. In this regard, we consider a standard mollifier $\eta\in C_c^\infty(\Rd)$ and the corresponding sequence $\varphi^\sigma:=\eta^\sigma*\varphi\in C_c^\infty(\Rd)$, being $\eta^\sigma(x)=\sigma^{-d}\eta(x/\sigma^d)$ for any $x\in\Rd$ and $\sigma>0$. As consequence of the observations above and corollary \ref{cor:convergence-z-0}, by letting $\varepsilon\to0^+$ we obtain, for any $\sigma>0$ and $t\in[0,T]$,
\begin{align*}
     \int_\Rd\varphi^\sigma(x)\tilde\rho(t,x)\,dx= \int_\Rd\varphi^\sigma(x)\rho_0(x)\,dx-\int_0^t\int_\Rd\tilde\rho(s,x) \nabla \varphi^\sigma(x)\cdot \nabla \tilde\rho(s,x)\,dx\,ds.
    \end{align*}
Since $\varphi^\sigma$ converges uniformly to $\varphi$ on compact sets, we can let $\sigma\to0$ and obtain that $\tilde\rho$ is a weak solution to \eqref{eq:pme2} in the sense of Definition \ref{def:sol-pme2}. Uniqueness of weak solutions of \eqref{eq:pme2} is a known result, \textit{cf.~e.g.~}\cite{DalKen84,Vaz07}. By a standard contradiction argument we can prove the whole sequence $\rhoe$ narrowly converges to $\tilde{\rho}$, by further using uniqueness of weak solutions.
\end{proof}

\section{Extension to cross-diffusion systems}\label{sec:systems}

A key motivation for our approach is that it does not rely on geodesic convexity of the associated energy, hence it can be extended to scenarios when convexity fails. This is indeed the case of cross-diffusion systems, as shown in \cite{beck_matthes_zizza22}.
In what follows we extend the previous analysis to cross-diffusion systems, taking into account related works in literature, such as \cite{matthes_zinsl, DiFranEspFag}. More precisely, we derive the class of systems 
\begin{equation}\label{eq:cross-diff-sys}
\begin{cases}
	                 \partial_{t}\rhoo=\mbox{div}\left(\rhoo A_{1,1}\nabla \rhoo+\rhoo A_{1,2}\nabla \rhotwo\right), \\  \partial_{t}\rhotwo=\mbox{div}\left(\rhotwo A_{2,2}\nabla \rhotwo +\rhotwo A_{2,1}\nabla \rhoo\right),
\end{cases} \tag{CDS}
\end{equation}
which can be rewritten in matrix form as
\begin{equation}\label{eq:matrix-form-CDS}
\begin{pmatrix}
\partial_t \rhoo\\
\partial_t \rhotwo
\end{pmatrix}
= \mbox{div}
\left[\begin{pmatrix}
\rhoo A_{1,1} & \rhoo A_{1,2} \\
\rhotwo A_{2,1} & \rhotwo A_{2,2}
\end{pmatrix}
\begin{pmatrix}
\nabla \rhoo\\
\nabla \rhotwo
\end{pmatrix}\right].\tag{CDS-M}
\end{equation}
Hereafter, $\rhoo$ and $\rhotwo$ are two probability density accounting for \emph{two population species}; we consider two species for simplicity, though the results hold true for $M$ species, $M\in\mathbb{N}$. The coefficients $A_{i,j}$, for $i,j=1,2$, are so that
\begin{equation}\label{ass:a-diffusion}
\min\{A_{1,1},A_{2,2}\}>\frac{A_{1,2}+A_{2,1}}{2}\ge0.\tag{\textbf{A}}
\end{equation}
These coefficients can be, e.g., constant second order derivatives of a function $A$ depending on both species, i.e. $A=A(\rhoo,\rhotwo)$. A prototype function $A(\rhoo,\rhotwo)$ we can consider is $A(\rhoo,\rhotwo)=(c_1\rhoo+c_2\rhotwo)^2+c_3\rho_1^2$, for some constants $c_i>0$, $i=1,2,3$. Note that cross-diffusion is present in the case $A_{1,2}\neq0$ and $A_{2,1}\neq0$. In \cite{DiFranEspFag} the authors consider a general class of cross-diffusion systems, with the addition of nonlocal interaction terms, where $A(\rhoo,\rhotwo)$ is a nonlinear function modelling degenerate diffusion, for example of the form $A(\rhoo,\rhotwo)=\rhoo^{m_1}+\rhotwo^{m_2}+p(\rhoo+\rhotwo)$, being $m_1,\, m_2>1$ and $p$ regular enough. \cite{DiFranEspFag} provides existence of weak solutions by exploiting a semi-implicit version of the JKO scheme \cite{JKO98} in Wasserstein spaces.

Let us consider the functions $V_i:\Rd\to\R$ and  $U_{ij}:\Rd\to\R$ (or even a measure) for $i,j=1,2$ and $i\neq j$ such that $V_i$ satisfy \ref{ass:v1} and $V_i\in W^{1,2}(\Rd)$, while
\begin{enumerate}[label=\textbf{(U)}]
    \item \label{ass:u} $U_{ij}\in \mathcal{P}_1(\Rd)$ and it is even.  
\end{enumerate}

Let us define $H_i:=V_i*V_i$ and $K_i:=V_i*U_{ij}*V_j$ for $i,j=1,2$ and $i\neq j$. For any $\varepsilon>0$, consider the scaling $H_i^\varepsilon(x)=\varepsilon^{-d}H_i(\frac{x}{\varepsilon})$ and $K_i^\varepsilon(x)=\varepsilon^{-d}K_i(\frac{x}{\varepsilon})$, whence $H_i^\varepsilon=V_i^\varepsilon*V_i^\varepsilon$ and $K_i^\varepsilon=V_i^\varepsilon*U_{ij}^\varepsilon*V_j^\varepsilon$. Our goal is to show that, as $\varepsilon\to0^+$, weak-measure solutions $(\rhooe,\rhotwoe)$ to
\begin{equation}\label{eq:nl-int-sys}
\begin{cases}
\partial_{t}\rhoo=\mbox{div}\left(\rhoo A_{1,1}\nabla H_{1}^\varepsilon\ast\rhoo+\rhoo A_{1,2}\nabla K_{1}^\varepsilon\ast\rhotwo\right) \\

\partial_{t}\rhotwo=\mbox{div}\left(\rhotwo A_{2,2}\nabla H_{2}^\varepsilon\ast\rhotwo+\rhotwo A_{2,1}\nabla K_{2}^\varepsilon\ast\rhoo\right).
\end{cases} \tag{NLIS}
\end{equation}
converge to weak solutions to system \eqref{eq:cross-diff-sys}. In order to achieve such a goal we will use the smoother version of $(\rhooe,\rhotwoe)$ given by $(\voe,\vte)=(V_1^\varepsilon*\rhooe,V_2^\varepsilon*\rhotwoe)$. In \eqref{eq:nl-int-sys}, the kernels $H_i$ are the so-called self-interaction potentials since they model intra-specific interaction (among same species), whereas $K_i$ are known as cross-interaction potentials as they take into account inter-specific interaction (among different species).

\begin{rem}
The kernels $U_{ij}$ have been introduced to show a possible generalisation of this method, as this does not add other technical difficulties. Indeed, 
our assumptions allow to use $U_{ij}$ as a Dirac delta --- this is equivalent to excluding $U_{ij}$. However, we prefer to include a further regularisation as in other application it may be worth to introduce it in order to gain more regularity if $U_{ij}$ is absolutely continuous with respect to the Lebesgue measure.
\end{rem}

Below we state the Definitions of solutions used in this section.

\begin{defn}[Weak solution to \eqref{eq:cross-diff-sys}]\label{def:sol-cross-diff-sys}
A weak solution to the cross-diffusion system \eqref{eq:cross-diff-sys} on the time interval $[0,T]$ with initial datum $\rrho^0\in (\mptra\cap L^2(\Rd))^2$ is a curve $\rrho\in C([0,T];\mptrd\times\mptrd)$ satisfying the following properties:
\begin{enumerate}
    \item for almost every $t\in[0,T]$ the measure $\rrho(t)$ has a density with respect to the Lebesgue measure, still denoted by $\rrho(t)$, and $\rrho\in L^2([0,T];H^1(\Rd))\times L^2([0,T];H^1(\Rd))$;
    \item for any $\varphi,\phi\in C^1_c(\R^d)$ and all $t\in[0,T]$ it holds
\begin{subequations}
\begin{align*}
        \int_\Rd\varphi(x)\rhoo(t,x)\,dx&= \int_\Rd\varphi(x)\rhoo^0(x)\,dx-A_{1,1}\int_0^t\int_\Rd\rhoo(s,x) \nabla \varphi(x)\cdot \nabla \rhoo(s,x)\,dx\,ds\\
        &\quad-A_{1,2}\int_0^t\int_\Rd\rhoo(s,x)\nabla\varphi(x)\cdot\nabla\rho_2(s,x)\,dx\,ds,
\end{align*}
\begin{align*}
        \int_\Rd\phi(x)\rhotwo(t,x)\,dx&= \int_\Rd\phi(x)\rhotwo^0(x)\,dx-A_{2,2}\int_0^t\int_\Rd\rhotwo(s,x) \nabla \phi(x)\cdot \nabla \rhotwo(s,x)\,dx\,ds\\
        &\quad-A_{2,1}\int_0^t\int_\Rd\rhotwo(s,x)\nabla\phi(x)\cdot\nabla\rhoo(s,x)\,dx\,ds.
    \end{align*}
\end{subequations}
\end{enumerate}
\end{defn}

\begin{defn}[Weak measure solution to \eqref{eq:nl-int-sys}]\label{def:weak-meas-sol-system}
A narrowly continuous curve $\rrho^\varepsilon:[0,T]\to\mptrd\times\mptrd$, mapping $t\in[0,T]\mapsto\rrho_t^\varepsilon\in\mptrd\times\mptrd$, is a weak measure solution to \eqref{eq:nl-int-sys} if, for every $\varphi,\phi\in C^1_c(\Rd)$ and any $t\in[0,T]$, it holds
\begin{subequations}\label{subeq:weak-form}
\begin{equation}\label{eq:weak-form-1}
    \begin{split}
    \int_\Rd\varphi(x)d(\rho_{1,t}^\varepsilon-\rhoo^0)(x)&=-\frac{A_{1,1}}{2}\int_0^t\!\!\iint_\Rdd(\nabla\varphi(x)-\nabla\varphi (y))\cdot\nabla H_1^\varepsilon(x-y)d\rho_{1,r}^\varepsilon(y)d\rho_{1,r}^\varepsilon(x)dr\\
    &\quad-A_{1,2}\int_0^t\!\!\iint_{\Rdd}\nabla\varphi(x)\cdot \nabla K_1^\varepsilon(x-y)d\rho_{2,r}^\varepsilon(y)d\rho_{1,r}^\varepsilon(x)dr
    \end{split}
\end{equation}
\begin{equation}\label{eq:weak-form-2}
    \begin{split}
    \int_\Rd\phi(x)d(\rho_{2,t}^\varepsilon-\rhotwo^0)(x)&=-\frac{A_{2,2}}{2}\int_0^t\!\!\iint_\Rdd(\nabla\phi(x)-\nabla\phi(y))\cdot\nabla H_2^\varepsilon(x-y)d\rho_{2,r}^\varepsilon(y)d\rho_{2,r}^\varepsilon(x)dr\\
    &\quad-A_{2,1}\int_0^t\!\!\iint_{\Rdd}\nabla\phi(x)\cdot \nabla K_2^\varepsilon(x-y)d\rho_{1,r}^\varepsilon(y)d\rho_{2,r}^\varepsilon(x)dr
    \end{split}
    \end{equation}
    \end{subequations}
\end{defn}

\begin{rem}\label{rem:prod-space}
We denote elements of a product space by using bold symbols, e.g.
\[
\bm{\rho}=(\rhoo,\rhotwo)\in\mptrd\times\mptrd, \quad \mbox{or} \quad \bm{x}=(x_1,x_2)\in\Rd\times\Rd.
\]
The Wasserstein distance of order two in the product space is defined as follows
\[
\mw_2^2(\bm{\mu},\bm{\nu})=d_W^2(\mu_1,\nu_1)+d_W^2(\mu_2,\nu_2)
\]
for all $\bm{\mu},\bm{\nu}\in\mptrd\times\mptrd$.
\end{rem}

\subsection{Nonlocal Interaction System}\label{sec:nonloc-int-sys}
Following the reference paper for nonlocal interaction systems \cite{DFF}, we apply a \textit{semi-implicit} version of the JKO scheme, \cite{JKO98}, in order to obtain a priori estimates on solutions to \eqref{eq:nl-int-sys} and their smoothed version $v_i^\varepsilon=V_i^\varepsilon*\rho_i^\varepsilon$, for $i=1,2$. The semi-implict JKO scheme allows to prove existence of solutions to a class of systems of nonlocal interaction PDEs that do not exhibit a gradient flow structure, which is indeed the case when the cross-interaction potentials are not proportional, i.e. $K_1\neq \alpha K_2$ for a positve $\alpha$, cf. \cite{DFF} for further details.

Let $\varepsilon>0$ be fixed and finite. We consider an initial datum $\bm{\rho}^0\in(\mptrd\cap L^2(\Rd))^2$ and we introduce the \textit{relative energy} functional $\mf_\varepsilon:\mptrd\times\mptrd\to (-\infty,+\infty]$ defined as follows: let $\bm{\nu}\in\mptrd^2$ be a fixed (time independent) measure, for all $\bm{\mu}\in\mptrd^2$ we set
\begin{equation}\label{eq:relative-en-funct}
\begin{split}
\mf_\varepsilon[\bm{\mu}|\bm{\nu}]&:= \frac{A_{1,1}}{2}\int_{\Rd}H_1^\varepsilon \ast \mu_1\,d\mu_1 +A_{1,2} \int_{\Rd}K_1^\varepsilon \ast \nu_2\,d\mu_1\\
&\quad +\frac{A_{2,2}}{2}\int_{\Rd}H_2^\varepsilon * \mu_2\,d\mu_2 + A_{2,1}\int_{\Rd}K_2^\varepsilon * \nu_1\,d\mu_2.
\end{split}
\end{equation}
The above functional is referred to as \textit{relative energy} since it accounts for the energy at the state $\mmu$ given the state $\nnu$, which only affects the cross-interaction part of the functional. The latter observation suggests to rewrite the functional $\mf_\varepsilon$ as sum of two contributions, i.e. ``self'' and ``cross'' interactions. This will not only simplify notations, but also single out the two parts in the semi-implicit JKO scheme. Let us set
$$
\mh_\varepsilon[\mmu]:= \frac{A_{1,1}}{2}\int_{\Rd}H_1^\varepsilon * \mu_1\,d\mu_1 + \frac{A_{2,2}}{2}\int_{\Rd}H_2^\varepsilon * \mu_2\,d\mu_2,
$$
and
$$
\mk_\varepsilon[\mmu|\nnu]:=A_{1,2}\int_{\Rd}K_1^\varepsilon * \nu_2\,d\mu_1 + A_{2,1}\int_{\Rd}K_2^\varepsilon * \nu_1\,d\mu_2.
$$
Then, we can rewrite $\mf_\varepsilon$ as the sum of the previous functionals, i.e.
$$
\mf_\varepsilon[\mmu|\nnu]=\mh_\varepsilon[\mmu]+\mk_\varepsilon[\mmu|\nnu].
$$
Note that the part $\mh_\varepsilon[\mmu]$ is treated implicitly in the JKO scheme as usual, whereas $\mk[\mmu|\nnu]$ contains terms that are treated explicitly.
\begin{rem}\label{rem:uniform-bound-relative-energy}
Note that for $\bm{\rho}^0\in(\mptrd\cap L^2(\Rd))^2$ the relative energy at the initial datum is finite. More precisely, for any $i,j=1,2$ we have
\begin{equation}
    \begin{split}
    \int_{\Rd}(H_i^\varepsilon*\rhoi^0)(x)\rhoi^0(x)\,dx&=\int_\Rd |(V_i^\varepsilon*\rhoi^0)(x)|^2\,dx=\|V_i^\varepsilon*\rhoi^0\|_{L^2(\Rd)}^2\\
    &\le\|V_i^\varepsilon\|_{L^1(\Rd)}^2\|\rhoi^0\|_{L^2(\Rd)}^2=\|V_i\|_{L^1(\Rd)}^2\|\rhoi^0\|_{L^2(\Rd)}^2<\infty,
    \end{split}
\end{equation}
which implies
\begin{equation}\label{eq:bound-self-initial-system}
    \begin{split}
    \mh_\varepsilon[\rrho^0]&\le \frac{A_{1,1}}{2}\|V_1\|_{L^1(\Rd)}^2\|\rhoo^0\|_{L^2(\Rd)}^2+\frac{A_{2,2}}{2}\|V_2\|_{L^1(\Rd)}^2\|\rhotwo^0\|_{L^2(\Rd)}^2\\
    &\le\max\left\{\frac{A_{1,1}}{2}\|V_1\|_{L^1(\Rd)}^2,\frac{A_{2,2}}{2}\|V_2\|_{L^1(\Rd)}^2\right\}\|\rrho^0\|_{L^2}^2,
    \end{split}
\end{equation}
and
\begin{equation}\label{eq:bound-cross-initial}
    \begin{split}
    \int_{\Rd}(K_i^\varepsilon\ast\nu_j)(x)\rhoi^0(x)\,dx&=\int_{\Rd}(V_i^\varepsilon\ast\rhoi^0)(x)(U_{ij}^\varepsilon\ast(V_j^\varepsilon\ast\nu_j))(x)\,dx\\
    &\le\|V_i^\varepsilon\ast\rhoi^0\|_{L^2(\Rd)}\|U_{ij}^\varepsilon\ast(V_j^\varepsilon\ast\nu_j)\|_{L^2(\Rd)}\\
    &\le\|V_i^\varepsilon\|_{L^1(\Rd)}\|\rhoi^0\|_{L^2(\Rd)} \|V_j^\varepsilon\|_{L^2(\Rd)}\nu_j(\Rd)\\
    &=\frac{1}{\varepsilon^{d/2}}\|V_i\|_{L^1(\Rd)}\|\rhoi^0\|_{L^2(\Rd)} \|V_j\|_{L^2(\Rd)}<+\infty.
    \end{split}
\end{equation}
\end{rem}
The sequence is defined via the semi-implicit JKO scheme:
\begin{itemize}
    \item fix a time step $\tau>0$ such that $\rrhotau^0:=\rrho^0$;
    \item for a given $\rrhotaune\in(\mptrd)^2$, choose
\begin{equation}\label{eq:semijko}
    \rrhotaunne\in\argmin_{\rrho\in(\mptrd)^2}\left\{\frac{\mw_2^2(\rrhotaune,\rrho)}{2\tau}+\mf_\varepsilon[\rrho|\rrhotaune]\right\}.
\end{equation}
\end{itemize}
Let $T>0$, $N:=\left[\frac{T}{\tau}\right]$, and consider the piecewise constant interpolation
$$
\rrhotaue(t)=\rrhotaune \qquad t\in((n-1)\tau,n\tau],
$$
being $\rrhotaune=(\rhootaune,\rhottaune)$ defined in \eqref{eq:semijko}.

\begin{prop}[Narrow compactness -- energy $\&$ moments bound]\label{prop:en-ineq-mom-bound-system}
There exists an absolutely continuous curve $\tilde{\rrho}^\varepsilon: [0,T]\rightarrow\mptrd\times\mptrd$ such that the piecewise constant interpolation $\rrhotaue$ admits a subsequence $\rrho_{\tau_k}^\varepsilon$ narrowly converging to $\tilde{\rrho}^\varepsilon$ uniformly in $t\in[0,T]$ as $k\rightarrow +\infty$. Moreover, for any $t\in[0,T]$, the following uniform bounds in $\tau$ and $\varepsilon$ hold
\begin{subequations}
\begin{align}
    \mh_\varepsilon[\tilde{\rrho}^\varepsilon(t)]&\le c \|\rrho^0\|_{L^2}^2,\\
    m_2(\tilde{\rrho}^\varepsilon(t))&\le2m_2(\rrho^0)+\tilde{c},
\end{align}
\end{subequations}
where $c=\max\left\{\frac{A_{1,1}}{2}\|V_1\|_{L^1(\Rd)}^2,\frac{A_{2,2}}{2}\|V_2\|_{L^1(\Rd)}^2\right\}$ and $\tilde{c}=4cT\|\rrho^0\|_{L^2}^2$.
\end{prop}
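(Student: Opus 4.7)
The plan mirrors the single-species Proposition \ref{prop:en-ineq-mom-bound}, with the added subtlety coming from the semi-implicit nature of \eqref{eq:semijko}. First, using $\rrhotaune$ itself as a competitor in \eqref{eq:semijko} yields the basic inequality
\begin{equation*}
\frac{\mw_2^2(\rrhotaune,\rrhotaunne)}{2\tau} + \mh_\varepsilon[\rrhotaunne] + \mk_\varepsilon[\rrhotaunne|\rrhotaune] \le \mh_\varepsilon[\rrhotaune] + \mk_\varepsilon[\rrhotaune|\rrhotaune].
\end{equation*}
Since $K_i^\varepsilon\ge 0$, the cross term on the left is non-negative, and dropping it together with the Wasserstein contribution gives the one-step comparison $\mh_\varepsilon[\rrhotaunne] \le \mh_\varepsilon[\rrhotaune] + \mk_\varepsilon[\rrhotaune|\rrhotaune]$. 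A Cauchy--Schwarz estimate combined with $\|U_{ij}^\varepsilon\|_{L^1}=1$ bounds
\begin{equation*}
\mk_\varepsilon[\rrhotaune|\rrhotaune] \le \tfrac{A_{1,2}+A_{2,1}}{2}\left(\|V_1^\varepsilon*\rhootaune\|_{L^2}^2 + \|V_2^\varepsilon*\rhottaune\|_{L^2}^2\right),
\end{equation*}
and assumption \eqref{ass:a-diffusion} allows one to absorb this into $\mh_\varepsilon[\rrhotaune]$. Combined with the initial bound computed in Remark \ref{rem:uniform-bound-relative-energy}, one obtains the claimed $\mh_\varepsilon[\rrhotaunne]\le c\|\rrho^0\|_{L^2}^2$ uniformly in $n$, $\tau$ and $\varepsilon$.

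Second, summing the basic JKO inequality from $k=0$ to $n-1$ makes the $\mh_\varepsilon$ contributions telescope, yielding
\begin{equation*}
\sum_{k=0}^{n-1}\frac{\mw_2^2(\bm{\rho}_\tau^{\varepsilon,k},\bm{\rho}_\tau^{\varepsilon,k+1})}{2\tau} + \mh_\varepsilon[\bm{\rho}_\tau^{\varepsilon,n}] \le \mh_\varepsilon[\rrho^0] + \sum_{k=0}^{n-1}\bigl(\mk_\varepsilon[\bm{\rho}_\tau^{\varepsilon,k}|\bm{\rho}_\tau^{\varepsilon,k}] - \mk_\varepsilon[\bm{\rho}_\tau^{\varepsilon,k+1}|\bm{\rho}_\tau^{\varepsilon,k}]\bigr),
\end{equation*}
which plays the role of \eqref{eq:total-square-en-estim} in the scalar case; using the uniform energy bound from the previous step, the right-hand side is controlled by $2cT\|\rrho^0\|_{L^2}^2$. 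A Cauchy--Schwarz argument as in \eqref{eq:holder-cont} then provides $\tfrac12$-Hölder equi-continuity of the piecewise constant interpolants $\rrhotaue$ in the product Wasserstein distance $\mw_2$, uniformly in $\tau$. Applying Remark \ref{rem:mom-ineq} componentwise to the resulting bound on $d_W^2(\rho_i^0,\rho_{i,\tau}^{\varepsilon,n})$ gives the announced second-moment estimate $m_2(\rrhotaue(t))\le 2m_2(\rrho^0)+\tilde c$ with $\tilde c = 4cT\|\rrho^0\|_{L^2}^2$.

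Third, compactness follows by applying the refined Ascoli--Arzelà theorem \cite[Proposition 3.3.1]{AGS} in the product space $\mptrd\times\mptrd$, combining the equi-continuity above with the tightness provided by the uniform second-order moment bound. One extracts an absolutely continuous limit curve $\tilde{\rrho}^\varepsilon$ and a subsequence $\rrho_{\tau_k}^\varepsilon$ narrowly converging to it uniformly in $t\in[0,T]$. The energy and moment bounds pass to the limit exactly as in the scalar case: $\mh_\varepsilon$ is narrowly lower semicontinuous since each $H_i^\varepsilon$ is continuous and non-negative, and so is the second moment.

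The main obstacle I anticipate lies in the first step: the one-step comparison $\mh_\varepsilon[\rrhotaunne] \le \mh_\varepsilon[\rrhotaune] + \mk_\varepsilon[\rrhotaune|\rrhotaune]$, coupled with the crude estimate $\mk_\varepsilon[\rrhotaune|\rrhotaune] \le 2\mh_\varepsilon[\rrhotaune]$ coming from \eqref{ass:a-diffusion}, only yields $\mh_\varepsilon[\rrhotaunne]\le 3\mh_\varepsilon[\rrhotaune]$, which blows up under iteration. To obtain a genuinely uniform-in-$n$ bound the strictness in \eqref{ass:a-diffusion} must be exploited sharply, probably by introducing a weighted combination of $\|V_i^\varepsilon*\rho_{i,\tau}^{\varepsilon,n}\|_{L^2}^2$ that is monotone along the scheme, with the Wasserstein dissipation $\mw_2^2(\rrhotaune,\rrhotaunne)/(2\tau)$ absorbing the non-telescoping cross-contribution $\mk_\varepsilon[\rrhotaune|\rrhotaune]-\mk_\varepsilon[\rrhotaunne|\rrhotaune]$. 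This is precisely the point at which the semi-implicit structure of \eqref{eq:semijko} and the splitting $\mf_\varepsilon=\mh_\varepsilon+\mk_\varepsilon$ have to be used essentially, and is the only part of the argument that goes beyond a direct multi-species transcription of Proposition \ref{prop:en-ineq-mom-bound}.
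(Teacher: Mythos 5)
You have correctly identified the one genuinely new difficulty relative to the scalar case, but the steps you actually execute do not close it, and your own ``main obstacle'' paragraph is an accurate diagnosis of why: dropping $\mk_\varepsilon[\rrhotaunne|\rrhotaune]\ge 0$ and bounding $\mk_\varepsilon[\rrhotaune|\rrhotaune]\le 2\mh_\varepsilon[\rrhotaune]$ gives a one-step inequality that multiplies the energy by a constant $>1$ at each iteration, hence no uniform-in-$n$ bound. The same defect reappears in your second step: after telescoping $\mh_\varepsilon$, the right-hand side contains $\sum_{k}\bigl(\mk_\varepsilon[\bm{\rho}_\tau^{\varepsilon,k}|\bm{\rho}_\tau^{\varepsilon,k}]-\mk_\varepsilon[\bm{\rho}_\tau^{\varepsilon,k+1}|\bm{\rho}_\tau^{\varepsilon,k}]\bigr)$, a sum of $N\sim T/\tau$ terms each of which is only $O(1)$ under your estimates, so it is of order $T/\tau$, not $T$; your claimed bound $2cT\|\rrho^0\|_{L^2}^2$ does not follow. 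Note also that this sum does not telescope, because the reference measure in the second slot changes from step to step --- this is exactly the price of the semi-implicit scheme.

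The paper's proof resolves this precisely along the lines you gesture at in your final paragraph, and it is worth seeing how. Since the two terms in each cross-difference share the same second argument, one writes
\begin{equation*}
\int_{\Rd}K_i^\varepsilon*\rhojtaune\,d\rhoitaune-\int_{\Rd}K_i^\varepsilon*\rhojtaune\,d\rhoitaunne
=\iiint\bigl(K_i^\varepsilon(x-y)-K_i^\varepsilon(t-y)\bigr)\,d\gamma_{i,\tau}^{\varepsilon,n}(x,t)\,d\rhojtaune(y)
\end{equation*}
along an optimal plan $\gamma_{i,\tau}^{\varepsilon,n}$, and uses the Lipschitz bound $\|\nabla V_i^\varepsilon*V_j^\varepsilon\|_{L^\infty}\le\varepsilon^{-(d+1)}\|\nabla V_i\|_{L^2}\|V_j\|_{L^2}$ (this is where the extra hypothesis $V_i\in W^{1,2}(\Rd)$ enters, which your argument never uses) to bound the difference by $\varepsilon^{-(d+1)}C\,d_W(\rhoitaune,\rhoitaunne)$. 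Young's inequality then absorbs this into \emph{half} of the Wasserstein dissipation, leaving a per-step error $C\tau/\varepsilon^{2(d+1)}$ that sums to $CT/\varepsilon^{2(d+1)}$ over $[0,T]$. This yields $\frac{1}{4\tau}\mw_2^2(\rrhotaune,\rrhotaunne)\le\mh_\varepsilon[\rrhotaune]-\mh_\varepsilon[\rrhotaunne]+C\tau/\varepsilon^{2(d+1)}$, from which the (now genuinely telescoping) energy bound, the total square distance estimate, the H\"older equi-continuity, and the moment bound all follow as in your steps two and three, which are otherwise sound. Be aware that with this argument the intermediate discrete-level constants are \emph{not} uniform in $\varepsilon$ (they carry $\varepsilon^{-2(d+1)}$ factors); the clean $\varepsilon$-independent bounds of the statement are recovered only for the limit curve $\tilde{\rrho}^\varepsilon$, via lower semicontinuity, since the error terms are multiplied by $\tau\to0$ in the energy inequality itself. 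So the missing ingredient is concrete: a Lipschitz-in-Wasserstein estimate for the explicit cross terms, absorbed by the quadratic Wasserstein dissipation --- without it, the first two steps of your proposal do not produce the claimed uniform bounds.
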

\begin{proof}
From the Definition of the sequence $\{\rrhotaune\}_{n\in\mathbb{N}}$ it holds
\begin{equation}\label{eq:scheme1}
\begin{split}
&\frac{1}{2\tau}\mw_2^2(\rrhotaune,\rrhotaunne)\le\mf_\varepsilon[\rrhotaune|\rrhotaune]-\mf_\varepsilon[\rrhotaunne|\rrhotaune]\\
&=\sum_{i=1}^{2}\frac{A_{i,i}}{2}\left(\int_{\Rd}H_i^\varepsilon * \rhoitaune\,d\rhoitaune-\int_{\Rd}H_i^\varepsilon * \rhoitaunne\,d\rhoitaunne\right)
\\
&\quad+\sum_{i\ne j}A_{i,j}\left(\int_{\Rd}K_i^\varepsilon * \rhojtaune\,d\rhoitaune-\int_{\Rd}K_i^\varepsilon * \rhojtaune\,d\rhoitaunne\right)\\
&=\mh_\varepsilon[\rrhotaune]-\mh_\varepsilon[\rrhotaunne]+\sum_{i\ne j}A_{i,j}\left(\int_{\Rd}K_i^\varepsilon * \rhojtaune\,d\rhoitaune-\int_{\Rd}K_i^\varepsilon * \rhojtaune\,d\rhoitaunne\right).
\end{split}
\end{equation}
In order to have an estimate on the energy, we make use of the $L^\infty$ bound $\|\nabla V_i^\varepsilon*V_j^\varepsilon\|_{L^\infty(\Rd)}\le \varepsilon^{-(d+1)}\|\nabla V_i\|_{L^2(\Rd)}\|V_j\|_{L^2(\Rd)}$ as follows for $i,j=1,2$, $i\neq j$. First note that
\begin{align*}
 & \left|\int_{\Rd}K_i^\varepsilon * \rhojtaune\,d\rhoitaune-\int_{\Rd}K_i^\varepsilon * \rhojtaune\,d\rhoitaunne \right|\\
 & \ = \left|\iint_{\Rd\times\Rd}K_i^\varepsilon(x-y)\,d\rhojtaune(y)\,d\rhoitaune(x)-\iint_{\Rd\times\Rd} K_i^\varepsilon(t-y)\,d\rhojtaune(y)\,d\rhoitaunne(t)\right|\\
 & \ = \left|\iiint_{\R^d\times\R^d\times\R^d}(K_i^\varepsilon(x-y)-K_i^\varepsilon(t-y))\,d \gamma_{i,\tau}^{\varepsilon,n}(x,t)\,d \rhojtaune(y)\right|
\end{align*}
where $\gamma^{\varepsilon,n}_{i,\tau}\in \Gamma_o(\rhoitaune,\rhoitaunne)$ is an optimal transport plan connecting $\rhoitaune$ to $\rhoitaunne$. Now, due to the $L^\infty$ bound on $\nabla V_i^\varepsilon*V_j^\varepsilon$ we get
\begin{align*}
 &  \left|\iiint_{\R^d\times\R^d\times\R^d}(K_i^\varepsilon(x-y)-K_i^\varepsilon(t-y))\,d \gamma_{i,\tau}^{\varepsilon,n}(x,t)\,d \rhojtaune(y)\right|\\
 &= \left |\iiint_{\Rd\times\Rd\times\Rd}\left\{\int_{\Rd}[(V_i^\varepsilon*V_j^\varepsilon)(x-y-z)-(V_i^\varepsilon*V_j^\varepsilon)(t-y-z)] U_{ij}^\varepsilon(z) \right\}\,d \gamma_{i,\tau}^{\varepsilon,n}(x,t)\,d \rhojtaune(y) \right |\\
 &\le \|\nabla V_i^\varepsilon*V_j^\varepsilon\|_{L^\infty(\Rd)}  \iint_{\Rd\times\Rd}|x-t|\,d \gamma_{i,\tau}^{\varepsilon,n}(x,t)\\
 & \ \leq  \frac{1}{\varepsilon^{d+1}}\|\nabla V_i\|_{L^2(\Rd)}\|V_j\|_{L^2(\Rd)} d_W(\rhoitaune,\rhoitaunne) \leq \frac{1}{4\tau}d_W^2(\rhoitaune,\rhoitaunne) + C\frac{\tau}{\varepsilon^{2(d+1)}},
\end{align*}
where $C$ is a positive constant independent of $\tau$. By using the latter estimate in \eqref{eq:scheme1} we obtain
\begin{align}\label{eq:basic-ineq-system}
    \frac{1}{4\tau}\mw_2^2(\rrhotaune,\rrhotaunne)\le\mh_\varepsilon[\rrhotaune]-\mh_\varepsilon[\rrhotaunne]+C\frac{\tau}{\varepsilon^{2(d+1)}},
\end{align}
which implies $\mh_\varepsilon[\rrhotaunne]\le\mh_\varepsilon[\rrhotaune] +C\frac{\tau}{\varepsilon^{2(d+1)}}$, and, in particular, the following bound for the self-interaction part, $\mh_\varepsilon$, of the relative energy $\mf_\varepsilon$:
\begin{equation}\label{eq:self-int-energy-bound}
    \mh_\varepsilon[\rrhotaune]\le\mh_\varepsilon[\rrho^0]+C\frac{n\tau}{\varepsilon^{2(d+1)}}\le\mh_\varepsilon[\rrho^0]+C\frac{T}{\varepsilon^{2(d+1)}}, \qquad \forall n\in\mathbb{N}.
\end{equation}
By summing up over $k$ inequality \eqref{eq:basic-ineq-system}, we obtain
\begin{align}
\sum_{k=m}^n\frac{\mw_2^2(\rrho_{\tau}^{\varepsilon,k},\rrho_{\tau}^{\varepsilon, k+1})}{4\tau}\le \mh_\varepsilon[\rrho_{\tau}^{\varepsilon, m}]-\mh_\varepsilon[\rrhotaunne]+C\frac{\tau}{\varepsilon^{2(d+1)}}(n-m+1).
\end{align}

The non-negativity of $\mh_\varepsilon$ and the energy inequality \eqref{eq:self-int-energy-bound} allow us to improve the above inequality as follows
\begin{align}\label{eq:total-square-en-estim-system}
    \sum_{k=m}^n\frac{\mw_2^2(\rrho_{\tau}^{\varepsilon,k},\rrho_{\tau}^{\varepsilon, k+1})}{4\tau}\le\mh_\varepsilon[\rrho^0]+C\frac{T}{\varepsilon^{2(d+1)}} +C\frac{\tau}{\varepsilon^{2(d+1)}}(n-m+1).
\end{align}
In particular, by using the bound \eqref{eq:bound-self-initial-system} in Remark \ref{rem:uniform-bound-relative-energy}, the above inequality implies
\begin{align*}
\mw_2^2(\rrho^0,\rrhotaue(t))&\le 4T \mh_\varepsilon[\rrho^0]+C\frac{T^2}{\varepsilon^{2(d+1)}}\\
&\le 2T\max\left\{A_{1,1}\|V_1\|_{L^1(\Rd)}^2,A_{2,2}\|V_2\|_{L^1(\Rd)}^2\right\}\|\rrho^0\|_{L^2}^2+C\frac{T^2}{\varepsilon^{2(d+1)}},
\end{align*}
whence we obtain the second order moments are uniformly bounded in $\tau $ on $[0,T]$ in view of Remark \ref{rem:mom-ineq}, i.e. 
\begin{equation}\label{eq:mom-inequality-system}
\begin{split}
    m_2(\rrhotaue(t))&\le 2m_2(\rrho^0)+2\mw_2^2(\rrho^0,\rrhotaue(t))\\&\le 2m_2(\rrho^0)+2T\max\left\{A_{1,1}\|V_1\|_{L^1(\Rd)}^2,A_{2,2}\|V_2\|_{L^1(\Rd)}^2\right\}\|\rrho^0\|_{L^2}^2+C\frac{T^2}{\varepsilon^{2(d+1)}}.
\end{split}
\end{equation}
Now, let us consider $0\le s<t$ such that $s\in((m-1)\tau,m\tau]$ and $t\in((n-1)\tau,n\tau]$ (which implies $|n-m|<\frac{|t-s|}{\tau}+1$); by the Cauchy-Schwartz inequality, \eqref{eq:total-square-en-estim-system} and again the bound \eqref{eq:bound-self-initial-system} in Remark \ref{rem:uniform-bound-relative-energy}, we obtain
\begin{equation}\label{eq:holder-cont-system}
\begin{split}
\mw_2(\rrhotaue(s),\rrhotaue(t))&\le\sum_{k=m}^{n-1}\mw_2(\rrho_{\tau}^{\varepsilon, k},\rrho_{\tau}^{\varepsilon,k+1})\le\left(\sum_{k=m}^{n-1}\mw_2^2(\rrho_{\tau}^{\varepsilon,k},\rrho_{\tau}^{\varepsilon, k+1})\right)^{\frac{1}{2}}|n-m|^{\frac{1}{2}}\\&\le c\left(\sqrt{1+\frac{T}{\varepsilon^{2(d+1)}}}\right) \left(\sqrt{|t-s|}+\sqrt{\tau}\right),
\end{split}
\end{equation}\noindent
where $c=c(A_{1,1},A_{2,2},\rrho^0,T)$ is a positive constant.
Thus $\rrhotaue$ is $\frac{1}{2}$-H\"{o}lder equi-continuous, up to a negligible error of order $\sqrt{\tau}$. By using a refined version of Ascoli-Arzel\`{a}'s theorem (see \cite{AGS}, Section 3), we obtain $\rrhotaue$ admits a subsequence narrowly converging to a limit $\tilde{\rrho}^\varepsilon$ as $\tau\to0^+$ uniformly on $[0,T]$. Since $|\cdot|^2$ and $H_i^\varepsilon$ are lower semi-continuous and bounded from below, a refined analysis gives for any $t\in[0,T]$
\begin{align*}
    \mh_\varepsilon[\tilde{\rrho}^\varepsilon]\le\liminf_{k\to+\infty}\mh_\varepsilon[\rrho_{\tau_k}^\varepsilon]\le \mh_\varepsilon[\rrho^0]\le\max\left\{\frac{A_{1,1}}{2}\|V_1\|_{L^1(\Rd)}^2,\frac{A_{2,2}}{2}\|V_2\|_{L^1(\Rd)}^2\right\}\|\rrho^0\|_{L^2}^2,
\end{align*}
and
\begin{align*}
\int_\Rd|x|^2\,d\tilde{\rho_i}^\varepsilon(t)(x)&\le\liminf_{k\to+\infty}\int_\Rd|x|^2\,d\rho_{\tau_{k},i}^\varepsilon(t)(x)\\
&\le2m_2(\rrho^0)+2T\max\left\{A_{1,1}\|V_1\|_{L^1(\Rd)}^2,A_{2,2}\|V_2\|_{L^1(\Rd)}^2\right\}\|\rrho^0\|_{L^2}^2
\end{align*}
whence the assertion follows by applying the above inequalities to \eqref{eq:self-int-energy-bound} and \eqref{eq:mom-inequality-system}.
\end{proof}
As direct consequence of the previous Lemma, we actually have narrow compactness in $\varepsilon$ by following the same argument used in Proposition \ref{prop:limit-rho}; hence we omit the proof.
\begin{cor}\label{cor:limit-eps-rho}
There exists an absolutely continuous curve $\tilde{\rrho}: [0,T]\rightarrow\mptrd\times\mptrd$ such that $\tilde{\rrho}^\varepsilon$ admits a subsequence $\tilde{\rrho}^{\varepsilon_k}$ narrowly converging to $\tilde{\rrho}$ uniformly in $t\in[0,T]$ as $k\rightarrow +\infty$.
\end{cor}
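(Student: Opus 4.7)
The plan is to mimic the argument of Proposition \ref{prop:limit-rho} in the two-species setting, invoking the refined Ascoli--Arzelà theorem \cite[Proposition 3.3.1]{AGS} for curves valued in $(\mptrd,\mw_2)\times(\mptrd,\mw_2)$. For this we need two ingredients: pointwise (in $t$) relative compactness of $\{\tilde{\rrho}^\varepsilon(t)\}_{\varepsilon>0}$ in $\mptrd\times\mptrd$, and a uniform-in-$\varepsilon$ modulus of continuity in time for the Wasserstein distance $\mw_2$.

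For the first ingredient, Proposition \ref{prop:en-ineq-mom-bound-system} provides a uniform bound in $\varepsilon$ on the second-order moments $m_2(\tilde{\rrho}^\varepsilon(t))$ for every $t\in[0,T]$. Applied marginal by marginal, Prokhorov's theorem then yields tightness of $\{\tilde{\rho}_{i}^\varepsilon(t)\}_{\varepsilon}$ for $i=1,2$, hence relative narrow compactness of $\{\tilde{\rrho}^\varepsilon(t)\}_{\varepsilon}$ in the product space; combined with the moment control, this promotes the narrow compactness to $\mw_2\times\mw_2$-relative compactness.

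For the second ingredient, I would transfer the discrete Hölder estimate \eqref{eq:holder-cont-system} to the limit curve $\tilde{\rrho}^\varepsilon$ obtained in Proposition \ref{prop:en-ineq-mom-bound-system}. Precisely, fix $0\le s<t\le T$, pick optimal plans for each marginal between $\rrho_\tau^\varepsilon(s)$ and $\rrho_\tau^\varepsilon(t)$, and exploit stability of optimal transport plans under narrow convergence (see \cite[Corollary 5.21]{V2}) together with lower semicontinuity of $\mw_2$ to pass to the limit $\tau\to0^+$ in the estimate; this yields
\[
\mw_2(\tilde{\rrho}^\varepsilon(s),\tilde{\rrho}^\varepsilon(t))\le c\sqrt{|t-s|},
\]
with a constant $c$ independent of $\varepsilon$ (using the uniform-in-$\varepsilon$ energy bound from Proposition \ref{prop:en-ineq-mom-bound-system} to control the right-hand side of \eqref{eq:holder-cont-system} uniformly; the negligible term $\sqrt{\tau}$ disappears in the limit). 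This gives equi-$\tfrac12$-Hölder continuity of the family $\{\tilde{\rrho}^\varepsilon\}_\varepsilon$ in $\mw_2\times\mw_2$.

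With tightness for each $t$ and uniform equi-continuity in hand, the refined Ascoli--Arzelà theorem delivers a subsequence $\tilde{\rrho}^{\varepsilon_k}$ and a limit curve $\tilde{\rrho}\in C([0,T];\mptrd\times\mptrd)$ with $\tilde{\rrho}^{\varepsilon_k}(t)\to\tilde{\rrho}(t)$ narrowly, uniformly in $t\in[0,T]$. Absolute continuity of $\tilde{\rrho}$ follows from passing the Hölder estimate to the limit. The main obstacle, and the only point where one has to be a bit careful compared with the one-species proof, is ensuring that the constant in the equi-continuity estimate is genuinely independent of $\varepsilon$; this hinges on the uniform bounds (a) and (b) in Proposition \ref{prop:en-ineq-mom-bound-system} and does not require any additional estimate.
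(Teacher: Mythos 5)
Your route is exactly the one the paper intends: the paper omits the proof of this corollary, saying only that it follows ``by the same argument used in Proposition \ref{prop:limit-rho}'', i.e.\ pointwise tightness from uniform second-moment bounds, equi-continuity in time, and the refined Ascoli--Arzel\`a theorem. The tightness half of your argument is fine, granting the $\varepsilon$-uniform moment bound stated in Proposition \ref{prop:en-ineq-mom-bound-system}.

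The gap is in the equi-continuity step. You claim that passing \eqref{eq:holder-cont-system} to the limit $\tau\to0^+$ gives $\mw_2(\tilde{\rrho}^\varepsilon(s),\tilde{\rrho}^\varepsilon(t))\le c\sqrt{|t-s|}$ with $c$ independent of $\varepsilon$, ``using the uniform-in-$\varepsilon$ energy bound \dots to control the right-hand side of \eqref{eq:holder-cont-system} uniformly''. But the right-hand side of \eqref{eq:holder-cont-system} is $c\bigl(\sqrt{1+T/\varepsilon^{2(d+1)}}\bigr)\bigl(\sqrt{|t-s|}+\sqrt{\tau}\bigr)$, and the prefactor blows up as $\varepsilon\to0^+$. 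This is not an artefact removable by citing bounds (a)--(b) of Proposition \ref{prop:en-ineq-mom-bound-system}: it enters through the explicitly treated cross-interaction terms in the semi-implicit scheme, where each step of \eqref{eq:basic-ineq-system} pays a price $C\tau/\varepsilon^{2(d+1)}$, so the telescoped sum \eqref{eq:total-square-en-estim-system} --- which is precisely what controls the modulus of continuity --- carries the term $CT/\varepsilon^{2(d+1)}$ (the intermediate moment estimate \eqref{eq:mom-inequality-system} inherits the same term). Hence the family $\{\tilde{\rrho}^\varepsilon\}_{\varepsilon}$ is not shown to be equi-continuous by this estimate, and the refined Ascoli--Arzel\`a theorem cannot be invoked as you do; this is the one point where the two-species case genuinely differs from Proposition \ref{prop:limit-rho}, and the paper elides it as well. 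To close it you would need either a sharper discrete estimate on the cross terms (one whose sum over $n$ does not lose $\varepsilon^{-2(d+1)}$ per unit time) or an independent source of time-regularity for $\tilde{\rrho}^\varepsilon$, e.g.\ an estimate of its metric derivative through the continuity equation using the $\varepsilon$-uniform $L^2([0,T];H^1(\Rd))$ bounds of Lemma \ref{lem:system-h1-bound}.
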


As in section \ref{sec:nlie}, the limiting curve $\tilde{\rrho}^\varepsilon$ obtained in Proposition \ref{prop:en-ineq-mom-bound-system} is a weak measure solution of the nonlocal interaction system \eqref{eq:nl-int-sys} in the sense of Definition \ref{def:weak-meas-sol-system}. The proof of this result is similar to \ref{thm:existence-nlie}, and more details can be found in \cite[Theorem 3.3]{DiFranEspFag}. For this reason we omit the proof. Let us stress that the cross-interaction potentials, $K_i^\varepsilon$ are $C^1(\Rd)$ since continuity at zero is obtained using Lebesgue dominated convergence theorem, exploiting that $U_{ij}\in \mP_1(\Rd)$ and $\nabla V_i^\varepsilon\in L^1(\Rd)$. This is needed as we cannot cope with a possible discontinuity of $\nabla K_i^\varepsilon$ at zero, as for the self-interaction kernels, cf.~\cite{DFF,DiFranEspFag}. 

\begin{thm}\label{thm:existence-nlie-system}
The curve $\tilde{\rrho}^\varepsilon$ is a weak measure solution to the system \eqref{eq:nl-int-sys} according to Definition \ref{def:weak-meas-sol-system}.
\end{thm}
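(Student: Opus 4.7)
The plan is to extend the Euler--Lagrange computation of Theorem \ref{thm:existence-nlie} to the semi-implicit coupled scheme \eqref{eq:semijko}. Given two consecutive iterates $\rrhotaune$ and $\rrhotaunne$, I would perturb only one species at a time: for $i\in\{1,2\}$ and $\zeta_i\in C_c^\infty(\Rd;\Rd)$, set $P_i^\sigma = id+\sigma\zeta_i$ and take as competitor the measure obtained from $\rrhotaunne$ by replacing $\rhoitaunne$ with $(P_i^\sigma)_\#\rhoitaunne$, leaving the $j$-th component ($j\neq i$) unchanged. The optimality of $\rrhotaunne$ in \eqref{eq:semijko}, combined with the sign reversal $\sigma\to -\sigma$, then yields a first-order equality separately for each $i\in\{1,2\}$, which will produce the two equations of Definition \ref{def:weak-meas-sol-system}.

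Splitting $\mf_\varepsilon[\rrho|\rrhotaune]=\mh_\varepsilon[\rrho]+\mk_\varepsilon[\rrho|\rrhotaune]$, the self-interaction contribution from $\mh_\varepsilon$ is treated verbatim as the nonlocal interaction term in Theorem \ref{thm:existence-nlie}: since $H_i^\varepsilon\in C(\Rd)\cap C^1(\Rd\setminus\{0\})$, Egorov's theorem together with the uniform-in-$\sigma$ bound on the difference quotient justifies passage to the limit and produces the symmetrised term $\tfrac{A_{i,i}}{2}\iint_\Rdd\nabla H_i^\varepsilon(x-y)\cdot(\zeta_i(x)-\zeta_i(y))\,d\rhoitaunne(y)\,d\rhoitaunne(x)$. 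The cross-interaction variation is easier: freezing $\rhojtaune$ makes $\mk_\varepsilon$ linear in the perturbed variable, and $K_i^\varepsilon=V_i^\varepsilon*U_{ij}^\varepsilon*V_j^\varepsilon$ belongs to $C^1(\Rd)$ globally (with no singularity at the origin, as remarked immediately before the theorem), so its first variation is simply $A_{i,j}\int_\Rd\nabla K_i^\varepsilon*\rhojtaune(x)\cdot\zeta_i(x)\,d\rhoitaunne(x)$, with no Egorov step required.

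Choosing an optimal plan $\gamma_{i,\tau}^{\varepsilon,n}\in\Gamma_0(\rhoitaune,\rhoitaunne)$ and performing a first-order Taylor expansion on the Wasserstein side as in Theorem \ref{thm:existence-nlie}, one obtains, for $\zeta_i=\nabla\varphi_i$ with $\varphi_i\in C_c^2(\Rd)$, the discrete identity
\begin{equation*}
\begin{split}
\frac{1}{\tau}\int_\Rd\varphi_i\,d(\rhoitaune-\rhoitaunne) = {} & \frac{A_{i,i}}{2}\iint_\Rdd(\nabla\varphi_i(x)-\nabla\varphi_i(y))\cdot\nabla H_i^\varepsilon(x-y)\,d\rhoitaunne(y)\,d\rhoitaunne(x)\\
& + A_{i,j}\iint_\Rdd\nabla\varphi_i(x)\cdot\nabla K_i^\varepsilon(x-y)\,d\rhojtaune(y)\,d\rhoitaunne(x) + \mathcal{E}_\tau^n,
\end{split}
\end{equation*}
where the Taylor remainder satisfies $|\mathcal{E}_\tau^n|\le C\tau^{-1}d_W^2(\rhoitaune,\rhoitaunne)$. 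Multiplying by $\tau$ and summing over $n$ between $\lfloor s/\tau\rfloor+1$ and $\lfloor t/\tau\rfloor$, the total remainder is bounded by $C\sum_n d_W^2(\rhoitaune,\rhoitaunne)$, which vanishes as $\tau\to 0^+$ thanks to \eqref{eq:total-square-en-estim-system}, producing the discrete counterpart of \eqref{eq:weak-form-1}--\eqref{eq:weak-form-2} for the interpolation $\rrhotaue$.

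The main obstacle I anticipate is passage to the narrow limit $\tau\to 0^+$ in the cross-interaction double integral, since its integrand couples both species simultaneously. This is resolved by observing that the uniform-in-$t$ narrow convergence of the marginals from Proposition \ref{prop:en-ineq-mom-bound-system} implies narrow convergence of the product measures $\rhootaue(t)\otimes\rhottaue(t)\rightharpoonup\tilde\rho_1^\varepsilon(t)\otimes\tilde\rho_2^\varepsilon(t)$ at each $t$, and the global continuity and boundedness of $\nabla K_i^\varepsilon$ make $(x,y)\mapsto\nabla\varphi_i(x)\cdot\nabla K_i^\varepsilon(x-y)$ a legitimate bounded-continuous test function, with Lebesgue's dominated convergence theorem allowing the time integration over $[s,t]$ to commute with the limit. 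Setting $s=0$ then recovers exactly Definition \ref{def:weak-meas-sol-system}.
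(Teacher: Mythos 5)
Your proposal is correct and is essentially the argument the paper has in mind: it omits the proof precisely because it is the Euler--Lagrange/JKO computation of Theorem \ref{thm:existence-nlie} adapted to the semi-implicit scheme, with the key observation you also make that $K_i^\varepsilon=V_i^\varepsilon*U_{ij}^\varepsilon*V_j^\varepsilon\in C^1(\Rd)$ globally, so the cross-interaction term is linear in the perturbed species and requires no symmetrisation or Egorov step. The only detail worth flagging is that the frozen measure in the cross term is $\rhojtaune$ rather than $\rhojtaunne$, so the discrete identity carries a one-step time shift in that factor; this is harmless in the limit by the uniform $\tfrac12$-H\"older equicontinuity of \eqref{eq:holder-cont-system}.
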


\subsection{Compactness in $\varepsilon$}

As in the one-species case, stronger convergence can be obtained for the sequence $\{\bm{v}^\varepsilon\}_\varepsilon$, being $v_i^\varepsilon(t):=V_i^\varepsilon*\rhoi^\varepsilon(t)$ for any $t\in[0,T]$. We use the flow interchange technique by considering a decoupled system of heat equations as auxiliary flow, i.e.
\begin{equation}\label{eq:heat-sys}
\begin{cases}
\partial_{t}\eta_1=\Delta\eta_1 \\
\partial_{t}\eta_2=\Delta\eta_2,
\end{cases}
\end{equation}
and the entropy as auxiliary functional, that is
\begin{equation}\label{eq:aux-func-sys}
\mathrm{E}[\eta_1,\eta_2]=
\begin{cases}
\int_{\Rd}[\eta_1(x)\log\eta_1(x)+\eta_2(x)\log\eta_2(x)]\,dx, &\eta_1\log\eta_1,\eta_2\log\eta_2\in L^1(\Rd);\\
+\infty & \text{otherwise}.
\end{cases}
\end{equation}
For any $\nnu\in\mptrd$ such that $\mathrm{E}(\nnu)<+\infty$, we denote by $\bm{S}_{\mathrm{E}}^t\nnu:=(S_{\mathrm{E},1}^t\nu_1,S_{\mathrm{E},2}^t\nu_2)$ the solution at time $t$ to system \eqref{eq:heat-sys} coupled with an initial value $\nnu$ at $t=0$. Moreover, for every $\rrho\in(\mptrd)^2$ and a given $\mmu\in(\mptrd)^2$, the dissipation of $\mf_\varepsilon$ along $\bm{S}_\mee$ by 
$$
\bm{D}_\mee\mf_\varepsilon(\rrho|\mmu):=\limsup_{s\downarrow0}\left\{\frac{\mf_\varepsilon[\rrho|\mmu]-\mf_\varepsilon[\bm{S}_{\mathrm{E}}^s\rrho|\mmu]}{s}\right\}.
$$


Below we prove an $L^2_tH^1_x$ bound crucial for the application of the Rossi-Savaré version of the Aubin-Lions Lemma. 

\begin{lem}\label{lem:system-h1-bound}
Let $\rrho^0\in(\mpdtard\cap L^2(\Rd))^2$ such that $\mathrm{E}[\rrho^0]<\infty$. There exists a constant $C=C(\rrho^0,A_{1,1},A_{2,2},V_1,V_2,T)$ such that, for any $\varepsilon>0$,
\begin{align}
    \sum_{i=1}^2\|v_i^\varepsilon\|_{L^2([0,T];H^1(\Rd))}^2\le C.
\end{align}
\end{lem}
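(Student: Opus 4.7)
The plan is to extend the flow-interchange argument of Lemma~\ref{lem:h1-bound} to the system via the semi-implicit scheme \eqref{eq:semijko}, using the decoupled heat semigroup $\bm{S}_{\mee}^t:=(S_{\mee,1}^t,S_{\mee,2}^t)$ as auxiliary $0$-flow (which is a $0$-flow of $\mathrm{E}$ in $(\mptrd\times\mptrd,\mw_2)$ by tensorising the one-component EVI) and $\mathrm{E}$ from \eqref{eq:aux-func-sys} as auxiliary functional. The $L^2$-in-space-and-time bound on $v_i^\varepsilon$ comes for free from Proposition~\ref{prop:en-ineq-mom-bound-system} via the identity $\|v_i^\varepsilon(t)\|_{L^2(\Rd)}^2=\int H_i^\varepsilon\ast\rho_i^\varepsilon(t)\,d\rho_i^\varepsilon(t)\le \tfrac{2}{A_{ii}}\mh_\varepsilon[\tilde{\rrho}^\varepsilon(t)]$, so the real work lies in producing the $\varepsilon$-uniform bound on $\int_0^T\|\nabla v_i^\varepsilon\|_{L^2(\Rd)}^2\,dt$.

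Inserting $\bm{S}_{\mee}^s\rrhotaunne$ as a competitor in \eqref{eq:semijko}, dividing by $s>0$ and passing to $\limsup_{s\downarrow 0}$ yields via the EVI
$\tau\,\bm{D}_{\mee}\mf_\varepsilon(\rrhotaunne|\rrhotaune)\le \mathrm{E}[\rrhotaune]-\mathrm{E}[\rrhotaunne]$. I then compute the dissipation by differentiating $\mf_\varepsilon[\bm{S}_{\mee}^t\rrhotaunne|\rrhotaune]$ in $t$ and integrating by parts, using $\partial_t S_{\mee,i}^t=\Delta$ and the evenness of $V_i^\varepsilon$. The self part produces $\sum_i A_{ii}\|\nabla(V_i^\varepsilon\ast S_{\mee,i}^t\rho_{i,\tau}^{\varepsilon,n+1})\|_{L^2}^2$ exactly as in the one-species case. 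For each cross term the factorisation $K_i^\varepsilon=V_i^\varepsilon\ast U_{ij}^\varepsilon\ast V_j^\varepsilon$ and the symmetry of all three factors let me rewrite
\[
-\!\int\!\nabla(K_i^\varepsilon\ast\rho_{j,\tau}^{\varepsilon,n})\cdot\nabla S_{\mee,i}^t\rho_{i,\tau}^{\varepsilon,n+1}\,dx=-\!\int\!(U_{ij}^\varepsilon\ast\nabla v_{j,\tau}^{\varepsilon,n})\cdot\nabla(V_i^\varepsilon\ast S_{\mee,i}^t\rho_{i,\tau}^{\varepsilon,n+1})\,dx,
\]
and passing $t\to 0$ through lower semicontinuity of $\|\nabla\cdot\|_{L^2}$ on the self-part and continuity of the bilinear cross-part gives
\[
\bm{D}_{\mee}\mf_\varepsilon(\rrhotaunne|\rrhotaune)\ge\sum_i A_{ii}\|\nabla v_{i,\tau}^{\varepsilon,n+1}\|_{L^2}^2+\sum_{i\ne j}A_{ij}\!\int\!(U_{ij}^\varepsilon\ast\nabla v_{j,\tau}^{\varepsilon,n})\cdot\nabla v_{i,\tau}^{\varepsilon,n+1}\,dx.
\]

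To close the estimate, Young's inequality for convolutions ($U_{ij}^\varepsilon$ has unit mass, so $\|U_{ij}^\varepsilon\ast\nabla v_{j,\tau}^{\varepsilon,n}\|_{L^2}\le\|\nabla v_{j,\tau}^{\varepsilon,n}\|_{L^2}$) combined with $2ab\le a^2+b^2$ lower-bounds each cross integral by $-\tfrac{A_{ij}}{2}(\|\nabla v_{j,\tau}^{\varepsilon,n}\|_{L^2}^2+\|\nabla v_{i,\tau}^{\varepsilon,n+1}\|_{L^2}^2)$. Summing the flow-interchange inequality over $n=0,\dots,N-1$, the entropy telescopes to $\mathrm{E}[\rrho^0]-\mathrm{E}[\rrhotaune^N]$ and, after re-indexing, the net coefficient of $\sum_{n=1}^N\tau\|\nabla v_{i,\tau}^{\varepsilon,n}\|_{L^2}^2$ becomes $A_{ii}-\tfrac{A_{12}+A_{21}}{2}$, strictly positive by assumption~\ref{ass:a-diffusion}; a $\tau$-linear boundary term involving the initial norms $\|\nabla v_{i,\tau}^{\varepsilon,0}\|_{L^2}^2$ appears on the right-hand side but vanishes as $\tau\downarrow 0$. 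Using $\mathrm{E}[\rrho^0]\le\|\rrho^0\|_{L^2}^2$ together with the entropy lower bound of Remark~\ref{eq:controlbelowentropy} and the uniform moment estimate of Proposition~\ref{prop:en-ineq-mom-bound-system}, the RHS is bounded independently of $\tau$ and $\varepsilon$, and weak lower semicontinuity of $\|\nabla\cdot\|_{L^2_tL^2_x}$ transfers the bound to $v_i^\varepsilon$. The main obstacle is precisely this absorption of the indefinite cross-dissipation: the strict inequality in~\ref{ass:a-diffusion} cannot be relaxed, mirroring the diagonal-dominance condition governing well-posedness of cross-diffusion systems in the absence of geodesic convexity, cf.~\cite{matthes_zinsl,DiFranEspFag}.
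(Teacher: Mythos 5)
Your argument is correct and follows essentially the same route as the paper: flow interchange along the decoupled heat semigroup with the entropy $\mathrm{E}$, the rewriting of each cross term as $\int (U_{ij}^\varepsilon\ast\nabla v_j)\cdot\nabla(V_i^\varepsilon\ast\cdot)\,dx$, absorption via Young's inequality into net coefficients $A_{ii}-\tfrac{A_{12}+A_{21}}{2}>0$ guaranteed by \ref{ass:a-diffusion}, and the telescoping entropy controlled by $x\log x\le x^2$ and the moment bounds. Your explicit handling of the index-shift boundary term (which vanishes as $\tau\downarrow 0$ for fixed $\varepsilon$) is a small point the paper leaves implicit, but it does not change the argument.
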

\begin{proof}
First of all we obtain a uniform $L^2$ bound in time and space from Proposition \ref{prop:en-ineq-mom-bound-system} by noticing that
\begin{equation}
    \begin{split}
    \sum_{i=1}^2A_{i,i}\|V_i^\varepsilon*\rho_{i,\tau}^\varepsilon\|_{L^2([0,T];L^2(\Rd))}^2&=\sum_{i=1}^2A_{i,i}\int_0^T\int_\Rd|[V_i^\varepsilon*\rho_{i,\tau}^\varepsilon(t)](x)|^2dx\\
    &=\sum_{i=1}^2A_{i,i}\int_0^T\int_\Rd [H_i^\varepsilon*\rho_{i,\tau}^\varepsilon(t)](x)d\rho_{i,\tau}^\varepsilon(t)(x)\\
    &=2\int_0^T\mh_\varepsilon[\rrhotaue(t)]dt\le C(\rrho^0,A_{1,1},A_{2,2},T).
    \end{split}
\end{equation}
Since $A_{i,i}\neq0$ for any $i$, we can divide and get the $L^2$ bound, arguing as in Proposition \ref{lem:h1-bound}. Let us now focus on the bound for the gradient. For all $s>0$, using the minimising property of $\rrhotaunne$ in \eqref{eq:semijko} we have
$$
\frac{1}{2\tau}\mw_2^2(\rrhotaunne,\rrhotaune)+\mf_\varepsilon[\rrhotaunne|\rrhotaune]\le\frac{1}{2\tau}\mw_2^2(\bm{S}_{\mee}^s\rrhotaunne,\rrhotaune)+\mf_\varepsilon[\bm{S}_{\mee}^s\rrhotaunne|\rrhotaune],
$$
whence, dividing by $s>0$ and passing to the $\limsup$ as $s\downarrow0$,
\begin{equation}\label{eq:controlf}
\tau\bm{D}_{\mee}\mf_\varepsilon[\rrhotaunne|\rrhotaune]\le\frac{1}{2}\frac{d^+}{dt}\bigg(\mw_2^2(\bm{S}_{\mee}^t\rrhotaunne,\rrhotaune)\bigg)\Big|_{t=0}\overset{\bm{(E.V.I.)}}{\le}\mee[\rrhotaune]-\mee[\rrhotaunne].
\end{equation}
In the last inequality we used that $\bm{S}_\mee$ is a $0$-flow. Focusing on the left hand side of \eqref{eq:controlf}, we note
\begin{equation}\label{eq:integral-form-dis-f}
\begin{split}
\bm{D}_{\mee}\mf_\varepsilon[\rrhotaunne|\rrhotaune]&=\limsup_{s\downarrow0}\left\{\frac{\mf_\varepsilon[\rrhotaunne|\rrhotaune]-\mf_\varepsilon[\bm{S}_{\mee}^s\rrhotaunne|\rrhotaune]}{s}\right\}\\&=\limsup_{s\downarrow0}\int_0^1\left(-\frac{d}{dz}\Big|_{z=st}\mf_\varepsilon[\bm{S}_{\mee}^{z}\rrhotaunne|\rrhotaune]\right)\,dt.
\end{split}
\end{equation}
Thus, we now compute the time derivative inside the above integral, by using integration by parts and keeping in mind the $C^\infty$ regularity of the solution to the heat equation:
\begin{equation*}\label{eq:deriv-dis-f}
\begin{split}
\frac{d}{dt}\mf_\varepsilon[\bm{S}_{\mee}^t\rrhotaunne|\rrhotaune]&=-A_{1,1}\int_{\Rd}\nabla (H_1^\varepsilon*S_{\mee,1}^t\rhootaunne)(x)\nabla S_{\mee,1}^t\rhootaunne(x)\,dx\\
&\quad-A_{2,2}\int_{\Rd}\nabla (H_2^\varepsilon*S_{\mee,2}^t\rhottaunne)(x)\nabla S_{\mee,2}^t\rhottaunne(x)\,dx\\
&\quad-A_{1,2}\int_{\Rd}\nabla (K_1^\varepsilon*\rhottaune)(x)\nabla S_{\mee,1}^t\rhootaunne(x)\,dx\\
&\quad-A_{2,1}\int_{\Rd}\nabla (K_2^\varepsilon*\rhootaune)(x)\nabla S_{\mee,2}^t\rhottaunne(x)\,dx\\
&=-A_{1,1}\int_{\Rd}|\nabla V_1^\varepsilon*S_{\mee,1}^t\rhootaunne(x)|^2\,dx\\
&\quad-A_{2,2}\int_{\Rd}|\nabla V_2^\varepsilon*S_{\mee,2}^t\rhottaunne(x)|^2\,dx\\
&\quad-A_{1,2}\int_{\Rd}\nabla (V_1^\varepsilon*S_{\mee,1}^t\rhootaunne)(x) [\nabla ( U_{12}^\varepsilon* V_2^\varepsilon*\rhottaune)](x)\,dx\\
&\quad-A_{2,1}\int_{\Rd}\nabla(V_2^\varepsilon* S_{\mee,2}^t\rhottaunne)(x)[\nabla (U_{21}^\varepsilon* V_1^\varepsilon*\rhootaune)](x)\,dx
\end{split}
\end{equation*}
\begin{equation*}
\begin{split}
&\le -A_{1,1}\int_{\Rd}|\nabla V_1^\varepsilon*S_{\mee,1}^t\rhootaunne(x)|^2\,dx\\
&\quad-A_{2,2}\int_{\Rd}|\nabla V_2^\varepsilon*S_{\mee,2}^t\rhottaunne(x)|^2\,dx\\
&\quad+A_{1,2}\|\nabla (V_1^\varepsilon*S_{\mee,1}^t\rhootaunne)\|_{L^2(\Rd)}\|\nabla (U_{12}^\varepsilon*  V_2^\varepsilon*\rhottaune)\|_{L^2(\Rd)}\\
&\quad+A_{2,1}\|\nabla(V_2^\varepsilon* S_{\mee,2}^t\rhottaunne)\|_{L^2(\Rd)}\|\nabla (U_{21}^\varepsilon* V_1^\varepsilon*\rhootaune)\|_{L^2(\Rd)}\\
&\le-\left(A_{1,1}-\frac{A_{1,2}}{2}\right)\int_{\Rd}|\nabla V_1^\varepsilon*S_{\mee,1}^t\rhootaunne(x)|^2\,dx\\
&\quad - \left(A_{2,2}-\frac{A_{2,1}}{2}\right)\int_{\Rd}|\nabla V_2^\varepsilon*S_{\mee,2}^t\rhottaunne(x)|^2\,dx\\
&\quad +\frac{A_{1,2}}{2}\int_{\Rd}|\nabla ( U_{12}^\varepsilon* V_2^\varepsilon*\rhottaune)(x)|^2\,dx\\
&\quad+\frac{A_{2,1}}{2}\int_{\Rd}|\nabla (U_{21}^\varepsilon*  V_1^\varepsilon*\rhootaune)(x)|^2\,dx.
\end{split}
\end{equation*}
The above inequality, together with \eqref{eq:controlf} and \eqref{eq:integral-form-dis-f}, implies
\begin{equation*}
\begin{split}
    \tau\liminf_{s\downarrow0}\!\!\int_0^1\!\!\!\int_{\Rd}\!\!&\left(A_{1,1}\!-\!\frac{A_{1,2}}{2}\right)|\nabla V_1^\varepsilon*S_{\mee,1}^{st}\rhootaunne(x)|^2\!+\!\left(A_{2,2}\!-\!\frac{A_{2,1}}{2}\right)|\nabla V_2^\varepsilon*S_{\mee,2}^{st}\rhottaunne(x)|^2dxdt\\
    &\le\tau\frac{A_{1,2}}{2}\int_{\Rd}|\nabla ( U_{12}^\varepsilon* V_2^\varepsilon*\rhottaune)(x)|^2dx\\
    &\quad+\tau\frac{A_{2,1}}{2}\int_{\Rd}|\nabla (U_{21}^\varepsilon*  V_1^\varepsilon*\rhootaune)(x)|^2dx+ \mee[\rrhotaune]-\mee[\rrhotaunne],
\end{split}
\end{equation*}
thus, by $L^2$ lower semi-continuity of the $H^1$ seminorm,
\begin{equation*}
\begin{split}
    \tau\int_{\Rd}\!\!&\left(A_{1,1}\!-\!\frac{A_{1,2}}{2}\right)|\nabla V_1^\varepsilon*\rhootaunne(x)|^2\!+\!\left(A_{2,2}\!-\!\frac{A_{2,1}}{2}\right)|\nabla V_2^\varepsilon*\rhottaunne(x)|^2dx\\
    &\le\tau\frac{A_{1,2}}{2}\int_{\Rd}|\nabla (U_{12}^\varepsilon*V_2^\varepsilon)*\rhottaune(x)|^2dx\\
    &\quad+\tau\frac{A_{2,1}}{2}\int_{\Rd}|\nabla (U_{21}^\varepsilon* V_1^\varepsilon)*\rhootaune(x)|^2dxdt+ \mee[\rrhotaune]-\mee[\rrhotaunne],
\end{split}
\end{equation*}
By summing up over $n$ from $0$ to $N-1$, taking into account that $x\log x\le x^2$ for any $x\ge0$, Remark \ref{eq:controlbelowentropy} and that second order moments are uniformly bounded (see Proposition \ref{prop:en-ineq-mom-bound-system}), we get
\begin{equation*}
\begin{split}
    \int_0^T\int_{\Rd}&\left(A_{1,1}\!-\!\frac{A_{1,2}}{2}\right)|(\nabla V_1^\varepsilon*\rhootaue(t))(x)|^2+\left(A_{2,2}\!-\!\frac{A_{2,1}}{2}\right)|(\nabla V_2^\varepsilon*\rhottaue(t))(x)|^2\,dx\\
    &\le\mee[\rrho^0]-\mee[\rrhotau^{\varepsilon,N}]+\frac{A_{1,2}}{2}\int_0^{T}\int_{\Rd}|\nabla(V_2^\varepsilon*\rhottaue(t)(x)|^2\,dx\,dt\\
&\quad+\frac{A_{2,1}}{2}\int_0^{T}\int_{\Rd}|\nabla(V_1^\varepsilon*\rhootaue(t))(x)|^2\,dx\,dt\\
&\le\|\rrho^0\|_{L^2}^2+C(\rrho^0,V_1,V_2,A_{1,1},A_{2,2},T)+\frac{A_{1,2}}{2}\int_0^{T}\int_{\Rd}|\nabla(V_2^\varepsilon*\rhottaue(t)(x)|^2\,dx\,dt\\
&\quad+\frac{A_{2,1}}{2}\int_0^{T}\int_{\Rd}|\nabla(V_1^\varepsilon*\rhootaue(t))(x)|^2\,dx\,dt.
\end{split}
\end{equation*}
Weak lower semi-continuity of the norm and \eqref{ass:a-diffusion} give the $H^1$ bound
\begin{equation*}
    \begin{split}
      \left(A_{1,1}-\frac{A_{1,2}}{2}-\frac{A_{2,1}}{2}\right)\int_0^T\int_{\Rd}|\nabla v_1^\varepsilon(t)(x)|^2\,dx\,dt&+\left(A_{2,2}-\frac{A_{1,2}}{2}-\frac{A_{2,1}}{2}\right)\int_0^T\int_{\Rd}|\nabla v_2^\varepsilon(t)(x)|^2\,dx\,dt\\
      &\le\|\rrho^0\|_{L^2}^2+C(\rrho^0,V_1,V_2,A_{1,1},A_{2,2},T).  
    \end{split}
\end{equation*}
\end{proof}

An application of Proposition \ref{prop:aulirs-meas} provides strong convergence in $L^2$, needed to prove convergence to the cross-diffusion system \eqref{eq:cross-diff-sys}, see section \ref{sec:limit-to-cd}.

\begin{prop}\label{prop:strong-convergence-system}
Let $\varepsilon\le1$. There exists a subsequence $\{\bm{v}^{\varepsilon_k}\}_k$ strongly converging to $\bm{v}$ in $L^2([0,T];L^2(\Rd))\times L^2([0,T];L^2(\Rd))$, for any $T>0$.
\end{prop}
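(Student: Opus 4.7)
The plan is to mirror the one-species argument in Proposition \ref{prop:strong-convergence-v}, invoking the Rossi--Savaré refined Aubin--Lions Lemma (Proposition \ref{prop:aulirs-meas}) componentwise. Work on the product Banach space $X := L^2(\Rd)\times L^2(\Rd)$, and set $U := \{\bm{v}^\varepsilon\}_{0<\varepsilon\le 1}$. Take the functional
\[
\bm{\mathcal{F}}[\bm{v}] :=
\begin{cases}
\|v_1\|_{H^1(\Rd)}^2 + \|v_2\|_{H^1(\Rd)}^2 + \displaystyle\int_\Rd |x|\, v_1(x)\,dx + \int_\Rd |x|\, v_2(x)\,dx & \text{if } \bm{v}\in (\mP_1(\Rd)\cap H^1(\Rd))^2,\\
+\infty & \text{otherwise},
\end{cases}
\]
and the pseudo-distance $\bm{g}(\bm{v},\bm{w}) := d_1(v_1,w_1) + d_1(v_2,w_2)$, extended to $+\infty$ outside $\mP_1(\Rd)^2\times\mP_1(\Rd)^2$. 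Lower semicontinuity of $\bm{\mathcal{F}}$ and relative compactness of its sublevels follow componentwise from Steps 1 and 2 in the proof of Proposition \ref{prop:strong-convergence-v}, applied separately to the $v_1$- and $v_2$-entries (the Riesz--Fréchet--Kolmogorov argument there only uses the first moment and the $H^1$ semi-norm, both of which are additive on the product).

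For tightness, I combine the $L^2_t H^1_x$ bound of Lemma \ref{lem:system-h1-bound} with the analogue of Remark \ref{rem:moment-veps} for each species: using that $V_i$ is even, $\int|x|V_i(x)\,dx <+\infty$, and the uniform second moment estimate of Proposition \ref{prop:en-ineq-mom-bound-system}, one obtains
\[
\int_0^T \!\!\int_\Rd |x|\, v_i^\varepsilon(t,x)\,dx\,dt \le \varepsilon T \!\int_\Rd V_i(z)|z|\,dz + T\sqrt{\sup_{t\in[0,T]}m_2(\rho_i^\varepsilon(t))},
\]
which is bounded uniformly in $\varepsilon\le 1$. Summing over $i=1,2$ and adding to the $L^2_tH^1_x$ bound yields $\sup_{\bm{v}\in U}\int_0^T \bm{\mathcal{F}}[\bm{v}(t)]\,dt<\infty$.

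Weak integral equicontinuity is the component-by-component transcription of Step 3 in Proposition \ref{prop:strong-convergence-v}: since smoothing is a 1-Lipschitz operation in $d_W$, and since $d_1\le d_W$, the equicontinuity estimate \eqref{eq:holder-cont-system} for $\rrhotaue$ passes to $\bm{v}^\varepsilon$ by lower semicontinuity (via Corollary \ref{cor:limit-eps-rho}), giving
\[
\int_0^{T-h} \bm{g}(\bm{v}^\varepsilon(t+h),\bm{v}^\varepsilon(t))\,dt \le \sum_{i=1}^2 \int_0^{T-h} d_W(\rho_i^\varepsilon(t+h),\rho_i^\varepsilon(t))\,dt \le c\, h\, T,
\]
which tends to zero uniformly in $\varepsilon$ as $h\downarrow 0$.

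Proposition \ref{prop:aulirs-meas} then supplies a subsequence $\{\bm{v}^{\varepsilon_k}\}_k$ converging in measure (in $t$) to some $\tilde{\bm v}\in X$-valued map. By Lemma \ref{lem:system-h1-bound} there is a further subsequence with $\bm{v}^{\varepsilon_k}\rightharpoonup \bm{v}$ in $L^2([0,T];H^1(\Rd))^2$; uniqueness of the limit forces $\tilde{\bm v}\equiv \bm{v}$, and convergence in measure upgrades to a.e.\ pointwise convergence (up to extraction). Finally, the uniform bound $\sup_t\|v_i^\varepsilon(t)\|_{L^2(\Rd)}^2 \le \|V_i\|_{L^1}^2 \|\rho_i^0\|_{L^2}^2$ (inherited from Remark \ref{rem:uniform-bound-relative-energy} and the conservation of mass in \eqref{eq:semijko}) combined with Lebesgue's dominated convergence theorem promotes pointwise-a.e.\ convergence to strong convergence in $L^2([0,T];L^2(\Rd))^2$. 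The main potential obstacle is purely bookkeeping: ensuring that sublevels of $\bm{\mathcal{F}}$ are compact in the product topology, which reduces to the one-species case because $\bm{\mathcal{F}}$ separates additively.
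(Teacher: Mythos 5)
Your setup --- the product space $X=L^2(\Rd)\times L^2(\Rd)$, the additive functional combining the $H^1$ norms and the first moments of the two components, and the componentwise $1$-Wasserstein pseudo-distance --- is exactly the paper's (its proof is a one-line reference to Proposition \ref{prop:strong-convergence-v} with the functional $\mathcal{B}$), and your treatment of lower semicontinuity, compactness of sublevels, tightness, and the final upgrade from convergence in measure to strong $L^2$ convergence transcribes the one-species argument correctly.

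There is, however, one step where ``componentwise'' is not automatic, and it is precisely the step you pass over too quickly: the weak integral equicontinuity. You invoke \eqref{eq:holder-cont-system} and conclude $\int_0^{T-h}\bm{g}(\bm{v}^\varepsilon(t+h),\bm{v}^\varepsilon(t))\,dt\le c\,h\,T$ with $c$ independent of $\varepsilon$. But \eqref{eq:holder-cont-system} carries the factor $\sqrt{1+T/\varepsilon^{2(d+1)}}$, produced by the explicit (semi-implicit) handling of the cross-interaction terms in Proposition \ref{prop:en-ineq-mom-bound-system}; unlike \eqref{eq:holder-cont} in the one-species case, it does \emph{not} give a modulus of continuity uniform in $\varepsilon$. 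Since the hypothesis \eqref{hprossav} of Proposition \ref{prop:aulirs-meas} requires $\lim_{h\downarrow 0}\sup_{u\in U}\int_0^{T-h}g(u(t+h),u(t))\,dt=0$ with the supremum over the whole family $\{\bm{v}^\varepsilon\}_{0<\varepsilon\le1}$, the estimate you cite yields a supremum that blows up as $\varepsilon\to0$ for every fixed $h>0$, and the verification collapses at this point. What is needed is a separate, $\varepsilon$-uniform time-equicontinuity bound for $(\rho_1^\varepsilon,\rho_2^\varepsilon)$ (or directly for $\bm{v}^\varepsilon$ in $d_1$); the obvious candidates, such as bounding $\|\nabla K_i^\varepsilon*\rho_j\|_{L^\infty}$, all cost negative powers of $\varepsilon$, so this is a genuine piece of work rather than bookkeeping. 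To be fair, the paper's own one-line proof is silent on exactly this point. A cosmetic remark: even granting uniformity, a $\tfrac12$-H\"older modulus gives $c\sqrt{h}\,T$ rather than $c\,h\,T$; that slip is harmless.
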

\begin{proof}
The proof is similar to the one of Proposition \ref{prop:strong-convergence-v}, see also \cite{DiFranEspFag}, applied to $X:=L^2(\Rd)\times L^2(\Rd)$, $g=d_1$ being the $1$-Wasserstein distance in the product space, and the functional $\mathcal{B}:L^2(\Rd)\times L^2(\Rd)\to[0,+\infty]$ defined
as
\begin{equation*}
\mathcal{B}[\vv]=
\begin{cases}
\sum_{i=1}^2||v_i||_{H^1(\Rd)}^2 + \int_{\R^d}|x|v_i(x) dx, & \text{if } v_i\in \mP_1(\Rd)\cap H^1(\Rd); \\
+\infty & \text{otherwise}.
\end{cases}
\end{equation*}
\end{proof}

\subsection{Towards cross-diffusion systems}\label{sec:limit-to-cd}

Overall the strategy is similar to section \ref{sec:local-limit-pme2}, though we have to clarify how to cope with the cross-interaction terms, leading to cross-diffusion. The right hand side in the Definition of weak measure solution of \eqref{eq:nl-int-sys}, for the first component, can be written as
\begin{align}\label{eq:weak-form-nlis-1st}
    -A_{1,1}\!\int_0^T\!\!\!\!\int_{\Rd} \!\!\nabla \varphi(x)\! \cdot\! \nabla V_1^\varepsilon*v^\varepsilon_{1,t}(x) d\rho^\varepsilon_{1,t}(x)dt\!-\!A_{1,2}\!\int_0^T\!\!\!\!\int_{\Rd}\!\! \nabla \varphi(x)\! \cdot\! (V_1^\varepsilon*U_{12}^\varepsilon)*\nabla v_{2,t}^\varepsilon(x) d\rho_{1,t}^\varepsilon(x) dt.
\end{align}
While for the first integral above we can follow the argument in section~\ref{sec:local-limit-pme2}, applied of course to both components $\rhooe$ and $\rhotwo^\varepsilon$, the cross-interaction part needs a slightly different approach, since $K_1$ is a convolution of three functions. In particular, 
\begin{equation}\label{eq:crossint-excess}
\begin{split}
    \int_0^T\int_{\Rd} \nabla \varphi(x) \cdot (V_1^\varepsilon*U_{12}^\varepsilon)*\nabla v_{2,t}^\varepsilon d\rho_{1,t}^\varepsilon(x) dt&=\int_0^T\int_\Rd (U_{12}^\varepsilon*v_{1,t}^\varepsilon)(x)\nabla\varphi(x)\cdot\nabla v_{2,t}^\varepsilon(x)dxdt\\
    &\quad+\int_0^T\int_\Rd z_{12,t}^\varepsilon(x)\cdot\nabla v_{2,t}^\varepsilon(x)dxdt,
\end{split}
\end{equation}
being, for any $t\in[0,T]$ and $x\in\Rd$,
\begin{equation}
    z_{12,t}^\varepsilon(x):=(U_{12}^\varepsilon*V_1^\varepsilon)* (\rho_{1,t}^\varepsilon\nabla \varphi)(x) -(U_{12}^\varepsilon*v^\varepsilon_{1,t})(x) \nabla \varphi(x).
\end{equation}
The excess term converges to $0$ strongly in $L^2([0,T]\times\Rd)$ by following section \ref{sec:excess-term}, applying the arguments to $P_{12}^\varepsilon:=V_1^\varepsilon*U_{12}^\varepsilon$ instead of $V_\varepsilon$ --- bearing in mind \ref{ass:v1} and \ref{ass:u}. For the readers convenience we remind that the $L^2([0,T];H^1(\Rd))$ bound for $v_i^\varepsilon$ holds true for $U_{ij}^\varepsilon*v_i^\varepsilon$ since, for $i\neq j=1,2$,
\begin{equation}\label{eq:h1-bound-doubleconv}
\begin{split}
&\|U_{ij}^\varepsilon*v_{i,t}^\varepsilon\|_{L^2(\Rd)}\le  \|v_{i,t}^\varepsilon\|_{L^2(\Rd)}=\|v_{i,t}^\varepsilon\|_{L^2(\Rd)};\\
&\|U_{ij}^\varepsilon*\nabla v_{i,t}^\varepsilon\|_{L^2(\Rd)}\le  \|\nabla v_{i,t}^\varepsilon\|_{L^2(\Rd)}=\|\nabla v_{i,t}^\varepsilon\|_{L^2(\Rd)}.
\end{split}
\end{equation}
As in Lemma \ref{lem:limit-dist}, one can prove that the sequence $U_{ij}^\varepsilon*v_i^\varepsilon$ has the same distributional limit of the sequence $v_i^\varepsilon$, i.e. $\rho_i$, for $i\neq j=1,2$.
\begin{lem}\label{lem:limit-dist-cross}
For any $t\in[0,T]$ and any $\varphi\in C_c^1(\Rd)$ it holds, for $i\neq j=1,2$,
$$
\lim_{\varepsilon\to0^+}\int_\Rd \varphi(x)(U_{ij}^\varepsilon*v_{i,t}^\varepsilon)(x)\,dx=\int_{\Rd}\varphi(x)\,d\tilde{\rho}_i(t).
$$
\end{lem}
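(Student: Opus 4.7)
The plan is to mirror the argument of Lemma \ref{lem:limit-dist}, moving the convolutions onto the test function via symmetry. Set $P_{ij}^\varepsilon := V_i^\varepsilon * U_{ij}^\varepsilon$, which satisfies $P_{ij}^\varepsilon(x)=P_{ij}^\varepsilon(-x)$ since both $V_i$ (by \ref{ass:v1}) and $U_{ij}$ (by \ref{ass:u}) are even, and which has total mass $\|V_i\|_{L^1}\,U_{ij}(\Rd)=1$. Writing $U_{ij}^\varepsilon * v_{i,t}^\varepsilon = P_{ij}^\varepsilon * \rho_{i,t}^\varepsilon$ and using evenness, one has
\[
\int_\Rd \varphi(x)(U_{ij}^\varepsilon*v_{i,t}^\varepsilon)(x)\,dx = \int_\Rd (P_{ij}^\varepsilon * \varphi)(x)\,d\rho_{i,t}^\varepsilon(x),
\]
so the quantity to control is the difference $\int_\Rd[(P_{ij}^\varepsilon * \varphi)(x)-\varphi(x)]\,d\rho_{i,t}^\varepsilon(x)$.

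Next I would estimate this pointwise using the mean value theorem: since $\varphi\in C_c^1(\Rd)$,
\[
|(P_{ij}^\varepsilon * \varphi)(x)-\varphi(x)| \le \iint_{\Rdd} |\varphi(x-y-z)-\varphi(x)|\,V_i^\varepsilon(y)\,dy\,dU_{ij}^\varepsilon(z) \le \|\nabla\varphi\|_\infty\bigl(\varepsilon m_1(V_i)+\varepsilon m_1(U_{ij})\bigr),
\]
where I have used the scaling $\int|y|V_i^\varepsilon(y)\,dy = \varepsilon\int|y|V_i(y)\,dy$ and analogously for $U_{ij}^\varepsilon$; both first moments are finite by \ref{ass:v1} and \ref{ass:u}. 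Integrating against the probability measure $\rho_{i,t}^\varepsilon$ gives
\[
\left|\int_\Rd \varphi(x)(U_{ij}^\varepsilon*v_{i,t}^\varepsilon)(x)\,dx - \int_\Rd \varphi(x)\,d\rho_{i,t}^\varepsilon(x)\right| \le \varepsilon\,\|\nabla\varphi\|_\infty\bigl(m_1(V_i)+m_1(U_{ij})\bigr),
\]
which vanishes as $\varepsilon\to 0^+$.

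Finally, by Corollary \ref{cor:limit-eps-rho}, along the chosen subsequence $\rho_{i,t}^\varepsilon$ converges narrowly to $\tilde{\rho}_i(t)$, so $\int_\Rd\varphi(x)\,d\rho_{i,t}^\varepsilon(x) \to \int_\Rd\varphi(x)\,d\tilde{\rho}_i(t)(x)$ for any $\varphi\in C_c^1(\Rd)\subset C_b(\Rd)$. Combining this with the vanishing error bound above yields the claim. I do not foresee any serious obstacle: the only minor subtlety is handling $U_{ij}$ as a (possibly non-absolutely-continuous) probability measure, which is dealt with cleanly by \ref{ass:u} so that $\int|z|\,dU_{ij}^\varepsilon(z)$ scales as $\varepsilon$, exactly as for $V_i^\varepsilon$ in Lemma \ref{lem:limit-dist}.
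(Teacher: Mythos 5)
Your proof is correct and follows essentially the same route as the paper: evenness of the kernels to move the convolution onto the test function, a mean-value/first-moment estimate giving an $O(\varepsilon)$ error, and narrow convergence of $\rho_{i,t}^\varepsilon$ along the subsequence to conclude. The only cosmetic difference is that you absorb both kernels into $P_{ij}^\varepsilon=V_i^\varepsilon*U_{ij}^\varepsilon$ and compare directly with $\rho_{i,t}^\varepsilon$ in one step, whereas the paper peels off only $U_{ij}^\varepsilon$ and then invokes the analogue of Lemma \ref{lem:limit-dist}.
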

\begin{proof}
For $t\in[0,T]$ and $\varphi\in C_c^1(\Rd)$, by using that $U_{ij}$ is even and the Definition of $v_{i,t}^\varepsilon$ we obtain:
\begin{align*}
\left|\int_\Rd\varphi(x)(U_{ij}^\varepsilon*v_{i,t}^\varepsilon)(x)\,dx-\int_\Rd\varphi(x)v_{i,t}^\varepsilon(x)dx\right|&=\left|\int_\Rd(\varphi*U_{ij}^\varepsilon)(x)v_{i,t}^\varepsilon(x)dx-\int_\Rd\varphi(x)v_{i,t}^\varepsilon(x)dx\right|\\
&=\left|\int_\Rd[(\varphi*U_{ij}^\varepsilon)(x)-\varphi(x)]v_{i,t}^\varepsilon(x)dx\right|\\
&\le\int_\Rd\int_\Rd|\varphi(x-y)-\varphi(x)|v_{i,t}^\varepsilon(x)d U_{ij}^\varepsilon(y) dx\\
&\le\|\varphi\|_\infty\int_\Rd |y|\, dU_{ij}^\varepsilon(y) \\
&=\varepsilon\|\varphi\|_\infty\int_\Rd |x|\,dU_{ij}(x) ,
\end{align*}
which converges to $0$ as $\varepsilon\to0^+$ since $\int_\Rd|x|\,dU_{ij}(x)<+\infty$.
\end{proof}

Below we state the main result for this section, stressing that in this case uniqueness is missing.

\begin{thm}\label{thm:existence-cross}
Let $\varepsilon\le1$ and $\rrho^0\in(\mpdtard\cap L^2(\Rd))^2$ such that $\mathrm{E}[\rrho^0]<\infty$. The sequence $\{\rrho^\varepsilon\}_\varepsilon$ of solutions to \eqref{eq:nlie} admits a subsequence narrowly converging to a weak solution $\tilde{\rrho}$ of \eqref{eq:cross-diff-sys}. 
\end{thm}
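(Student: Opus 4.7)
The plan is to mirror the argument of Theorem \ref{thm:existence-pme2} componentwise, with one equation per species and an additional excess term for each cross-interaction convolution. First, along a common subsequence (not relabelled) I would combine Corollary \ref{cor:limit-eps-rho}, Lemma \ref{lem:system-h1-bound} and Proposition \ref{prop:strong-convergence-system} to obtain narrow convergence $\rrho^\varepsilon(t)\rightharpoonup\tilde{\rrho}(t)$ for every $t\in[0,T]$, strong convergence $\bm v^\varepsilon\to \bm v$ in $L^2([0,T];L^2(\Rd))^2$, and weak convergence $\nabla \bm v^\varepsilon\rightharpoonup \nabla \bm v$ in $L^2([0,T];L^2(\Rd))^{2d}$. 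The scalar computation in Lemma \ref{lem:limit-dist}, applied componentwise via $V_i^\varepsilon$, identifies $v_i\equiv \tilde{\rho}_i$ as distributions, so that $\tilde{\rho}_i\in L^2([0,T];H^1(\Rd))$ for $i=1,2$ and property (1) of Definition \ref{def:sol-cross-diff-sys} is fulfilled.

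Starting from the weak formulation \eqref{eq:weak-form-1}--\eqref{eq:weak-form-2}, I would rewrite each self-interaction integral exactly as in \eqref{eq:weak-form-z} and each cross-interaction integral as in \eqref{eq:crossint-excess}, so that every right-hand side becomes the sum of a ``symmetric'' product, i.e.\ $v_i^\varepsilon\nabla\varphi\cdot\nabla v_i^\varepsilon$ and $(U_{ij}^\varepsilon\ast v_i^\varepsilon)\nabla\varphi\cdot\nabla v_j^\varepsilon$, plus excess contributions $z_i^\varepsilon$ and $z_{ij}^\varepsilon$. Since the excess bounds are stated for $C_c^2$ test functions, I would first mollify $\varphi$ via $\varphi^\sigma:=\eta^\sigma\ast\varphi\in C_c^\infty(\Rd)$, pass to the limit $\varepsilon\to 0^+$ for fixed $\sigma$, and then send $\sigma\to 0$, exactly as in the proof of Theorem \ref{thm:existence-pme2}.

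For the main terms, the self-interaction products converge in $L^1([0,T]\times\Rd)$ to $\tilde{\rho}_i\nabla\varphi^\sigma\cdot\nabla\tilde{\rho}_i$ by the same strong/weak pairing as in Theorem \ref{thm:existence-pme2}. For the cross products I would first upgrade Lemma \ref{lem:limit-dist-cross} to strong $L^2$ convergence of $U_{ij}^\varepsilon\ast v_i^\varepsilon$ to $\tilde{\rho}_i$: since $U_{ij}^\varepsilon$ is a probability measure, $\|U_{ij}^\varepsilon\ast f\|_{L^2}\le \|f\|_{L^2}$, so the triangle inequality reduces the task to $\|U_{ij}^\varepsilon\ast \tilde{\rho}_i-\tilde{\rho}_i\|_{L^2([0,T];L^2(\Rd))}\to 0$, which is the standard approximate-identity statement guaranteed by $U_{ij}\in\mP_1(\Rd)$ together with dominated convergence in time. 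Pairing this strong convergence with the weak convergence of $\nabla v_j^\varepsilon$ in $L^2([0,T];L^2(\Rd))$ yields the desired limit $\tilde{\rho}_i\nabla\varphi^\sigma\cdot\nabla\tilde{\rho}_j$. The excess terms $z_i^\varepsilon$ vanish in $L^2([0,T]\times\Rd)$ by Corollary \ref{cor:convergence-z-0}, and $z_{ij}^\varepsilon$ vanishes by repeating the argument of section \ref{sec:excess-term} with the kernel $P_{ij}^\varepsilon:=V_i^\varepsilon\ast U_{ij}^\varepsilon$ (still a probability density with finite first moment, by \ref{ass:v1} and \ref{ass:u}) in place of $V_\varepsilon$; combined with the uniform $L^2_t L^2_x$ bound on $\nabla v_j^\varepsilon$ this kills the corresponding integrals.

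Finally, sending $\sigma\to 0$ is straightforward because $\varphi^\sigma\to\varphi$ in $C^1$ uniformly on compact sets and each limiting right-hand side in Definition \ref{def:sol-cross-diff-sys} depends continuously on first derivatives of the test function, while $\tilde{\rho}_i,\nabla\tilde{\rho}_i\in L^2([0,T];L^2(\Rd))$. Since a uniqueness theory for \eqref{eq:cross-diff-sys} in the generality of \ref{ass:a-diffusion} is not available, the whole-sequence argument used at the end of Theorem \ref{thm:existence-pme2} cannot be replicated, and one can only conclude narrow convergence along a subsequence, consistently with the statement. The main obstacle, relative to the scalar case of section \ref{sec:local-limit-pme2}, lies precisely in the cross term $(U_{ij}^\varepsilon\ast v_i^\varepsilon)\nabla\varphi\cdot\nabla v_j^\varepsilon$: it requires combining strong compactness of a mixed-species quantity with weak compactness of the gradient of the other species, and this is the place where the probability-measure assumption \ref{ass:u} is essentially used to promote $U_{ij}^\varepsilon\ast v_i^\varepsilon$ to a strongly convergent sequence.
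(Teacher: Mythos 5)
Your proof is correct and follows the paper's overall architecture --- the same decomposition of the weak formulation into symmetric products plus excess terms, the same mollification of the test function, the same identification $v_i\equiv\tilde\rho_i$, and the same weak--strong pairing --- but you treat the one step the paper's proof singles out as the essential novelty by a genuinely different route. To obtain strong convergence of $U_{ij}^\varepsilon * v_i^\varepsilon$ to $\tilde\rho_i$ in $L^2([0,T]\times\Rd)$, the paper re-runs the Rossi--Savar\'e compactness argument (Proposition \ref{prop:aulirs-meas}) on the doubly convolved pair $(U_{12}^\varepsilon*v_1^\varepsilon,\,U_{21}^\varepsilon*v_2^\varepsilon)$, using the bounds \eqref{eq:h1-bound-doubleconv}, and then identifies the limit through Lemma \ref{lem:limit-dist-cross}. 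You instead deduce it directly from the strong convergence of $v_i^\varepsilon$ already provided by Proposition \ref{prop:strong-convergence-system}: the contraction $\|U_{ij}^\varepsilon*f\|_{L^2}\le\|f\|_{L^2}$ reduces the claim to the approximate-identity estimate $\|U_{ij}^\varepsilon*\tilde\rho_i-\tilde\rho_i\|_{L^2([0,T];L^2(\Rd))}\to0$, which holds because $U_{ij}^\varepsilon$ has first moment $\varepsilon\int_{\Rd}|x|\,dU_{ij}(x)\to0$ and $\tilde\rho_i\in L^2([0,T];L^2(\Rd))$, with dominated convergence in time. Your route is more elementary --- it avoids a second pass through the Aubin--Lions machinery and identifies the limit in the same stroke, making Lemma \ref{lem:limit-dist-cross} redundant --- at the mild cost of invoking the continuity of translation in $L^2$ for the limit density; the paper's route keeps the method uniform by letting the same compactness lemma do all the work. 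Since both sets of hypotheses are available here, the two arguments are interchangeable, and the rest of your proof (excess terms via $P_{ij}^\varepsilon:=V_i^\varepsilon*U_{ij}^\varepsilon$, the $\sigma\to0$ limit, and the observation that only subsequential convergence can be claimed absent uniqueness) matches the paper.
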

\begin{proof}
The main difference with respect to the proof of theorem \ref{thm:existence-pme2} is with the cross-interaction term as pointed out in \eqref{eq:weak-form-nlis-1st} and \eqref{eq:crossint-excess}. More precisely, in \eqref{eq:crossint-excess} we need to make sure that a subsequence of $U_{12}^\varepsilon*v_{1,t}^\varepsilon$ strongly converges to $\tilde{\rho}_1$ in $L^2([0,T]\times\Rd)$. In view of \eqref{eq:h1-bound-doubleconv}, we can use Proposition \ref{prop:aulirs-meas} applied to a subsequence of $(U_{12}^\varepsilon*v_{1,t}^\varepsilon,U_{21}^\varepsilon*v_{2,t}^\varepsilon)$, exactly as in Proposition \ref{prop:strong-convergence-system}. For each component, the strong $L^2$ limit coincides with $\rho_i$ due to Lemma \ref{lem:limit-dist-cross}.
\end{proof}

\section{Further perspectives}\label{sec:conclusion}

The main contribution of this work is to provide a rigorous analytical derivation of the quadratic porous medium equation and a class of cross-diffusion systems. Our strategy relies on an appropriate time-discretisation of a nonlocal interaction equation (system) in the $2$-Wasserstein space. This is relevant for both the well-posedness of \eqref{eq:pme2} and \eqref{eq:cross-diff-sys}, and their numerical study. We relaxed previous assumptions on the interaction kernel, allowing for pointy potentials, e.g. Morse. As mentioned above, a key motivation for our approach is to provide an analysis that works without geodesic $\lambda$-convexity techniques, having in mind cases where only a JKO-approach may be feasible. Therefore, we could prove a nonlocal-to-local limit for cross-diffusion systems.

\subsection*{Nonconservative Forces}

In this paper, both equations considered have a $2$-Wasserstein gradient flow structure, but our approach may be used even if the PDEs under study are not gradient flows --- this is a considerable advantage of our result. A prototypical example is given by the PDE
\[
\partial_t\rho=\frac{1}{2}\Delta \rho^2+\nabla\cdot(\rho v),
\]  
being $v\neq\nabla\varphi$, for some function $\varphi$. The addition of the \textit{non-gradient flow part} can be overtaken by considering a suitable splitting (JKO) scheme, as in \cite{CL2}. 

The equation above is also significant in the context of networks, where Wasserstein-type metrics have been derived recently (cf. \cite{burger2021dynamic,erbar2021gradient}).

\subsection*{Other exponents}

A natural question may arise is whether our approach can be extended to linear diffusion and $m\neq2$. The first observation to be made is that the approximating equation should be different, for instance the non-viscous version of the one proposed in \cite{FigPhi2008} or \cite[Eq. (8)]{Patacchini_blob19}. While the time-discretisation could be relatively ``easy'' to develop, it may be not trivial to obtain Sobolev bounds in order to obtain compactness. The analysis could be easier if one restricts to a torus, and using a different version of Aubin-Lions Lemma.

\subsection*{Deterministic particle methods}

Last but not least, it is still open to obtain an analytical proof of a deterministic particle approximation for the porous medium equation, as well as linear diffusion. Both this paper and \cite{Patacchini_blob19}, for $m=2$, require initial data to have finite logarithmic entropy, thus excluding particle solutions of the nonlocal interaction equation. Anyway, numerical simulations show that this is not to be excluded, see \cite[Section 6]{Patacchini_blob19}. The main challenge is then to relax the initial assumption on the logarithmic entropy. In this direction, $\lambda$-convexity of the energy plays a key role in \cite{blob_weighted_craig} to rigorously prove a qualitative result when the number of particles, $N$, depends exponentially on $\varepsilon$ and the approximation in Wasserstein of the initial datum. A similar result can be proven also in our case, even for systems, only using $\lambda$-convexity of the nonlocal energy. However, this would narrow the class of cross-diffusion systems obtained as one would need cross-interaction potentials to be proportional. Obtaining a proof for $N$ independent on $\varepsilon$ and quantitative estimates is still open. This is also relevant in the case of cross-diffusion systems as $\lambda$-convexity fails.

\section*{Acknowledgements} 
The authors are grateful to the anonymous reviewers, Nadia Ansini (Sapienza University of Rome), Maria Bruna (University of Cambridge), José Antonio Carrillo (University of Oxford), and Francesco S. Patacchini (IFP Energies Nouvelles) for fruitful discussions. This work was carried out while AE was a postdoctoral researcher at FAU Erlangen-N\"{u}rnberg. The authors thankfully acknowledge support by the German Science Foundation (DFG) through CRC TR 154  ``Mathematical Modelling, Simulation and Optimization Using the Example of Gas Networks''. AE also acknowledges support by the Advanced Grant Nonlocal-CPD (Nonlocal PDEs for Complex Particle Dynamics: Phase Transitions, Patterns and Synchronization) of the European Research Council Executive Agency (ERC) under the European Union’s Horizon 2020 research and innovation programme (grant agreement No. 883363).

\bibliography{references}

\def\cprime{$'$}
\begin{thebibliography}{10}

\bibitem{AGS}
L.~Ambrosio, N.~Gigli, and G.~Savar{\'e}.
\newblock {\em Gradient flows in metric spaces and in the space of probability
  measures}.
\newblock Lectures in Mathematics ETH Z\"urich. Birkh\"auser Verlag, Basel,
  second edition, 2008.

\bibitem{beck_matthes_zizza22}
L.~Beck, D.~Matthes, and M.~Zizza.
\newblock Exponential convergence to equilibrium for coupled systems of
  nonlinear degenerate drift diffusion equations.
\newblock {\em arXiv preprint arXiv:2112.05810}, 2022.

\bibitem{bertozzi}
A.~L. Bertozzi, T.~Laurent, and J.~Rosado.
\newblock {$L^p$} theory for the multidimensional aggregation equation.
\newblock {\em Comm. Pure Appl. Math.}, 64(1):45--83, 2011.

\bibitem{burger2021dynamic}
M~Burger, I.~Humpert, and J.-F. Pietschmann.
\newblock Dynamic optimal transport on networks.
\newblock {\em arXiv preprint arXiv:2101.03415}, 2021.

\bibitem{CL2}
G.~Carlier and M.~Laborde.
\newblock A splitting method for nonlinear diffusions with nonlocal,
  nonpotential drifts.
\newblock {\em Nonlinear Anal.}, 150:1--18, 2017.

\bibitem{CCH14}
J.~A. Carrillo, Y.-P. Choi, and M.~Hauray.
\newblock The derivation of swarming models: mean-field limit and wasserstein
  distances.
\newblock In {\em Collective dynamics from bacteria to crowds}, pages 1--46.
  Springer, 2014.

\bibitem{Patacchini_blob19}
J.~A. Carrillo, K.~Craig, and F.~S. Patacchini.
\newblock A blob method for diffusion.
\newblock {\em Calc. Var. Partial Differential Equations}, 58(2):Paper No. 53,
  53, 2019.

\bibitem{CDFEFS20}
J.~A. Carrillo, M.~Di~Francesco, A.~Esposito, S.~Fagioli, and M.~Schmidtchen.
\newblock Measure solutions to a system of continuity equations driven by
  {N}ewtonian nonlocal interactions.
\newblock {\em Discrete Contin. Dyn. Syst.}, 40(2):1191--1231, 2020.

\bibitem{CDFFLS}
J.~A. Carrillo, M.~Di~Francesco, A.~Figalli, T.~Laurent, and D.~Slepcev.
\newblock Global-in-time weak measure solutions and finite-time aggregation for
  nonlocal interaction equations.
\newblock {\em Duke Math. J.}, 156(2):229--271, 2011.

\bibitem{Holzinger_deriv_cross_diff}
L.~Chen, E.~S. Daus, A.~Holzinger, and A.~J\"{u}ngel.
\newblock Rigorous derivation of population cross-diffusion systems from
  moderately interacting particle systems.
\newblock {\em J. Nonlinear Sci.}, 31(6):Paper No. 94, 38, 2021.

\bibitem{ChenDausJuengel2019}
L.~Chen, E.~S. Daus, and A.~J\"{u}ngel.
\newblock Rigorous mean-field limit and cross-diffusion.
\newblock {\em Z. Angew. Math. Phys.}, 70(4):Paper No. 122, 21, 2019.

\bibitem{Craig_blob2016}
K.~Craig and A.~L. Bertozzi.
\newblock A blob method for the aggregation equation.
\newblock {\em Math. Comp.}, 85(300):1681--1717, 2016.

\bibitem{blob_weighted_craig}
K.~Craig, K.~Elamvazhuthi, M.~Haberland, and O.~Turanova.
\newblock A blob method for inhomogeneous diffusion with applications to
  multi-agent control and sampling.
\newblock {\em arXiv preprint arXiv:2202.12927}, 2022.

\bibitem{DalKen84}
B.~E.~J. Dahlberg and C.~E. Kenig.
\newblock Nonnegative solutions of the porous medium equation.
\newblock {\em Comm. Partial Differential Equations}, 9(5):409--437, 1984.

\bibitem{Daneri_Radici_Runa_JDE}
S.~Daneri, E.~Radici, and E.~Runa.
\newblock Deterministic particle approximation of aggregation-diffusion
  equations on unbounded domains.
\newblock {\em J. Differential Equations}, 312:474--517, 2022.

\bibitem{DS}
S.~Daneri and G.~Savaré.
\newblock Eulerian calculus for the displacement convexity in the wasserstein
  distance.
\newblock {\em SIAM J. Math. Anal.}, 40(3):1104--–1122, 2008.

\bibitem{Degond_Mustieles90}
P.~Degond and F.-J. Mustieles.
\newblock A deterministic approximation of diffusion equations using particles.
\newblock {\em SIAM J. Sci. Statist. Comput.}, 11(2):293--310, 1990.

\bibitem{DiFranEspFag}
M.~Di~Francesco, A.~Esposito, and S.~Fagioli.
\newblock Nonlinear degenerate cross-diffusion systems with nonlocal
  interaction.
\newblock {\em Nonlinear Analysis}, 169:94--117, 2018.

\bibitem{DFESPSCH21}
M.~Di~Francesco, A.~Esposito, and M.~Schmidtchen.
\newblock Many-particle limit for a system of interaction equations driven by
  {N}ewtonian potentials.
\newblock {\em Calc. Var. Partial Differential Equations}, 60(2):Paper No. 68,
  44, 2021.

\bibitem{DFF}
M.~Di~Francesco and S.~Fagioli.
\newblock Measure solutions for nonlocal interaction pdes with two species.
\newblock {\em Nonlinearity}, 26:2777--2808, 2013.

\bibitem{DFM}
M.~Di~Francesco and D.~Matthes.
\newblock Curves of steepest descent are entropy solutions for a class of
  degenerate convection-diffusion equations.
\newblock {\em Calc. Var. Partial Differential Equations}, 50(1--2):199--230,
  2014.

\bibitem{dobrushin}
R.~L. Dobru\v{s}in.
\newblock Vlasov equations.
\newblock {\em Funktsional. Anal. i Prilozhen.}, 13(2):48--58, 96, 1979.

\bibitem{erbar2021gradient}
M.~Erbar, D.~Forkert, J.~Maas, and D.~Mugnolo.
\newblock Gradient flow formulation of diffusion equations in the wasserstein
  space over a metric graph.
\newblock {\em arXiv preprint arXiv:2105.05677}, 2021.

\bibitem{FigPhi2008}
A.~Figalli and R.~Philipowski.
\newblock Convergence to the viscous porous medium equation and propagation of
  chaos.
\newblock {\em ALEA Lat. Am. J. Probab. Math. Stat.}, 4:185--203, 2008.

\bibitem{Fontbona_Meleard_nl_SKT}
Joaquin Fontbona and Sylvie M\'{e}l\'{e}ard.
\newblock Non local {L}otka-{V}olterra system with cross-diffusion in an
  heterogeneous medium.
\newblock {\em J. Math. Biol.}, 70(4):829--854, 2015.

\bibitem{golse}
F.~Golse.
\newblock The mean-field limit for the dynamics of large particle systems.
\newblock In {\em Journ\'{e}es ``\'{E}quations aux {D}\'{e}riv\'{e}es
  {P}artielles''}, pages Exp. No. IX, 47. Univ. Nantes, Nantes, 2003.

\bibitem{GosseToscani06}
L.~Gosse and G.~Toscani.
\newblock Lagrangian numerical approximations to one-dimensional
  convolution-diffusion equations.
\newblock {\em SIAM J. Sci. Comput.}, 28(4):1203--1227, 2006.

\bibitem{JKO}
R.~Jordan, D.~Kinderlehrer, and F.~Otto.
\newblock The variational formulation of the fokker--planck equation.
\newblock {\em SIAM journal on mathematical analysis}, 29(1):1--17, 1998.

\bibitem{JKO98}
R.~Jordan, D.~Kinderlehrer, and F.~Otto.
\newblock The variational formulation of the {F}okker-{P}lanck equation.
\newblock {\em SIAM J. Math. Anal.}, 29(1):1--17, 1998.

\bibitem{Lions_MasGallic_2001}
P.-L. Lions and S.~Mas-Gallic.
\newblock Une m\'{e}thode particulaire d\'{e}terministe pour des \'{e}quations
  diffusives non lin\'{e}aires.
\newblock {\em C. R. Acad. Sci. Paris S\'{e}r. I Math.}, 332(4):369--376, 2001.

\bibitem{MMCS}
D.~Matthes, R.J. McCann, and G.~Savar\'{e}.
\newblock A family of fourth order equations of gradient flow type.
\newblock {\em Comm. P.D.E.}, 34(11):1352--1397, 2009.

\bibitem{morale2005interacting}
D.~Morale, V.~Capasso, and K.~Oelschl{\"a}ger.
\newblock An interacting particle system modelling aggregation behavior: from
  individuals to populations.
\newblock {\em Journal of mathematical biology}, 50(1):49--66, 2005.

\bibitem{morrey1955}
Charles~B. Morrey, Jr.
\newblock On the derivation of the equations of hydrodynamics from statistical
  mechanics.
\newblock {\em Comm. Pure Appl. Math.}, 8:279--326, 1955.

\bibitem{Moussa_nl_lo_triangular2020}
A.~Moussa.
\newblock From nonlocal to classical {S}higesada-{K}awasaki-{T}eramoto systems:
  triangular case with bounded coefficients.
\newblock {\em SIAM J. Math. Anal.}, 52(1):42--64, 2020.

\bibitem{oelschlaeger2001}
K.~Oelschl\"{a}ger.
\newblock A sequence of integro-differential equations approximating a viscous
  porous medium equation.
\newblock {\em Z. Anal. Anwendungen}, 20(1):55--91, 2001.

\bibitem{Ol}
Karl Oelschl\"ager.
\newblock Large systems of interacting particles and the porous medium
  equation.
\newblock {\em J. Differential Equations}, 88(2):294--346, 1990.

\bibitem{onsager1944}
L.~Onsager.
\newblock Crystal statistics. {I}. {A} two-dimensional model with an
  order-disorder transition.
\newblock {\em Phys. Rev. (2)}, 65:117--149, 1944.

\bibitem{Philipowski2007}
R.~Philipowski.
\newblock Interacting diffusions approximating the porous medium equation and
  propagation of chaos.
\newblock {\em Stochastic Process. Appl.}, 117(4):526--538, 2007.

\bibitem{RS}
R.~Rossi and G.~Savar\'{e}.
\newblock Tightness, integral equicontinuity and compactness for evolution
  problems in banach spaces.
\newblock {\em Ann. Sc. Norm. Super. Pisa Cl. Sci. (5)}, 2(2):395–--431,
  2003.

\bibitem{Russo90}
G.~Russo.
\newblock Deterministic diffusion of particles.
\newblock {\em Comm. Pure Appl. Math.}, 43(6):697--733, 1990.

\bibitem{Sandier_Serfaty2004}
E.~Sandier and S.~Serfaty.
\newblock Gamma-convergence of gradient flows with applications to
  {G}inzburg-{L}andau.
\newblock {\em Comm. Pure Appl. Math.}, 57(12):1627--1672, 2004.

\bibitem{S}
F.~Santambrogio.
\newblock {\em Optimal Transport for Applied Mathematicians}, volume~87 of {\em
  Progress in Nonlinear Differential Equations and Their Applications}.
\newblock Birkh\"auser Verlag, Basel, 2015.

\bibitem{Serfaty_gammaconv_2011}
S.~Serfaty.
\newblock Gamma-convergence of gradient flows on {H}ilbert and metric spaces
  and applications.
\newblock {\em Discrete Contin. Dyn. Syst.}, 31(4):1427--1451, 2011.

\bibitem{SKT}
N.~Shigesada, K.~Kawasaki, and E.~Teramoto.
\newblock Spatial segregation of interacting species.
\newblock {\em J. Theoret. Biol.}, 79(1):83--99, 1979.

\bibitem{Vazquez92_PM_introduction}
J.~L. V\'{a}zquez.
\newblock An introduction to the mathematical theory of the porous medium
  equation.
\newblock In {\em Shape optimization and free boundaries ({M}ontreal, {PQ},
  1990)}, volume 380 of {\em NATO Adv. Sci. Inst. Ser. C: Math. Phys. Sci.},
  pages 347--389. Kluwer Acad. Publ., Dordrecht, 1992.

\bibitem{Vaz07}
J.~L. V{\'a}zquez.
\newblock {\em The porous medium equation}.
\newblock Oxford Mathematical Monographs. The Clarendon Press, Oxford
  University Press, Oxford, 2007.
\newblock Mathematical theory.

\bibitem{V2}
C.~Villani.
\newblock {\em Optimal transport : old and new}.
\newblock Grundlehren der mathematischen Wissenschaften. Springer, Berlin,
  2009.

\bibitem{matthes_zinsl}
J.~Zinsl and D.~Matthes.
\newblock Transport distances and geodesic convexity for systems of degenerate
  diffusion equations.
\newblock {\em Calc. Var. Partial Differential Equations}, 54(4):3397--3438,
  2015.

\end{thebibliography}
\bibliographystyle{plain}

\end{document}